\newcommand{\bq}{\begin{equation}}
\newcommand{\eq}{\end{equation}}
\newcommand{\bqa}{\begin{eqnarray*}}
\newcommand{\eqa}{\end{eqnarray*}}
\theoremstyle{plain}
\newtheorem{theo}{Theorem}[section]
\newtheorem{prop}[theo]{Proposition}
\newtheorem{lemm}[theo]{Lemma}
\newtheorem{defi}[theo]{Definition}
\theoremstyle{definition}
\newtheorem{rema}[theo]{Remark} 
\DeclareMathOperator{\dist}{dist}
\DeclareMathOperator{\di}{div}
\DeclareSymbolFont{pletters}{OT1}{cmr}{m}{sl}
\DeclareMathSymbol{s}{\mathalpha}{pletters}{`s}
\def\tt{\theta}
\def\eps{\varepsilon}
\def\na{\nabla}
\def\mez{\frac{1}{2}}
\def\tdm{\frac{3}{2}}
\def\Rr{\mathbb{R}}
\def\Nn{\mathbb{N}}
\def\Zz{\mathbb{Z}}
\def\Cc{\mathbb{C}}
\def\cF{\mathcal{F}}
\def\L1{\mathcal{L}^{(1)}}
\def\L2{\mathcal{L}^{(2)}}
\def\L3{\mathcal{L}^{(3)}}
\def\p{\partial}
\def\na{\nabla}
\def\ka{\kappa}
\def\ol{\overline}
\def\T{\mathbb{T}}
\def\ka{\kappa}
\def\rH{\mathring{H}}
\def\rC{\mathring{C}}
\numberwithin{equation}{section}
\def\a{\alpha}
\title{Large traveling capillary-gravity waves for Darcy flow}
\date{\today}
\author{Huy Q. Nguyen}
\address{
Department of Mathematics\\
University of Maryland\\
College Park, MD 20742, USA
}
\email[H. Q. Nguyen]{hnguye90@umd.edu}
\begin{document}

\begin{abstract}
We study surface capillary-gravity  waves for  viscous fluid flows governed by Darcy's law. This includes flows in vertical Hele-Shaw cells and in porous media (the one-phase Muskat problem) with finite or infinite depth. The free boundary is acted upon by an external pressure posited to be in traveling wave form with an arbitrary  periodic profile and an amplitude parameter. For any given  wave speed, we  first prove that there exists a unique local curve of small periodic traveling waves  corresponding to small values of the parameter. Then we prove that as the parameter increases but could possibly be bounded, the curve belongs to a connected set $\mathcal{C}$ of traveling waves. The  set $\mathcal{C}$ contains traveling waves that either have arbitrarily large gradients  or are arbitrarily close to the rigid bottom in the finite depth case. 
\end{abstract}

\keywords{Darcy law, Muskat problem, Hele-Shaw,  capillary,  traveling waves, surface waves}

\thanks{\em{MSC Classification: 35Q35, 76D27, 76D03, 76D45}}

\maketitle

\section{Introduction}
Traveling  surface waves for inviscid fluids  is a classic subject in mathematical fluid mechanics. After the pioneering work of Stokes \cite{Stokes}, small periodic traveling  waves were rigorously constructed by Nekrasov \cite{Nekrasov} and Levi-Civita \cite{Levi} in the infinite depth case and by Struik \cite{Struik} in the finite depth case. Subsequent developments  have led to constructions of large traveling waves including the extreme Stokes waves with angle $2\pi/3$ \cite{Krasovskii, Toland, AFT, Plotnikov}, solitary waves \cite{AmickToland}, and periodic traveling waves with vorticity \cite{ConstantinStrauss}. 
A review of the vast literature on this subject is beyond the scope of this paper and we refer to \cite{Ebbandflow} for a recent survey.  On the contrary, the mathematical theory of traveling surface waves for viscous fluids is far less developed. Since viscous fluids dissipate energy,  traveling surface waves can only exist when the system is appropriately forced. In the experimental works \cite{DCDA, MasnadiDuncan, ParkCho1}, a tube blowing air onto the surface of a viscous fluid is uniformly translated above the surface, resulting in the observation of traveling surface waves. Motivated in part by these experimental works, Leoni-Tice \cite{LeoniTice} constructed for the first time traveling waves for the free boundary gravity and capillary-gravity incompressible Navier-Stokes equations forced by a bulk force and an external stress tensor on the free boundary. They are small waves in suitable function spaces and their profiles are asymptotically flat at  infinity. The construction in \cite{LeoniTice} was then extended in \cite{StevensonTice1} to the multiplayer configuration, and in \cite{KoganemaruTice} to allow for traveling waves that are periodic in certain directions or all directions or traveling waves above an inclined flat bed. In \cite{StevensonTice2}, Stevenson-Tice constructed small asymptotically-flat traveling waves to the free boundary compressible Navier-Stokes equations. On the other hand, for viscous  flow in porous media, Nguyen-Tice \cite{NguyenTice}  constructed small traveling waves to the one-phase Muskat problem forced by a bulk force and an external pressure on the free boundary.  The waves constructed in \cite{NguyenTice} are either periodic or asymptotically flat, and it was proven that small periodic waves induced by external pressure on  free boundary are asymptotically stable. 

A common feature of all the above viscous free boundary problems is that they admit the equilibrium configuration of flat free boundary and quiescent fluid when there is no additional external force. The aforementioned constructions of traveling waves are perturbative from the equilibrium: small traveling waves are generated by small external forces.  Here, the smallness is imposed at least on the $C^{1, \a}$ norm of the free boundary. The main purpose of the present paper is to provide  examples of  viscous free boundary problems for which large traveling waves can be constructed non-perturbatively by means of  global continuation theory.  More precisely, we consider  capillary-gravity and capillary waves for fluid flows governed by Darcy's law
\bq\label{Darcy}
\frac{\mu}{\iota}u+\na_{x, y} p=-g\rho\vec{e_y},\quad \di u=0,
\eq
where $\vec{e_y}=(0, 1)\in \Rr^d\times \Rr$ is the vertical unit vector, and the physical dimensions are $d=1, 2$.  Here, $(x, y)\in \Rr^d\times \Rr$ denotes a point in the fluid domain, and $u$ and $p$ are respectively the fluid velocity and pressure. The positive parameters $\mu$, $g$ and $\rho$ are respectively the dynamic viscosity, acceleration due to gravity and fluid density. For flows in porous media, $\iota$ is the permeability constant, and the free boundary problem is called the one-phase Muskat problem. See Chapter 4 in \cite{Bear}. On the other hand, for two-dimensional flows in a vertical Hele-Shaw cell, lubrication theory gives $\iota=h^2/(12\mu)$, where  $h$ is the distance between the vertical plates. See Section 11.4 in \cite{Bear}. For notational simplicity we set
\[
\frac{\mu}{\iota}=\rho=1
\]
throughout this paper. 

For the sake of constructing periodic traveling waves we posit that the free boundary is the graph of a periodic function $\eta(x, t): \T^d\times \Rr_+\to \Rr$, where $\T^d=\Rr^d\setminus (2\pi \Zz)^d$ is the $d$-dimensional torus. This choice of a square torus is only for convenience and the sides of the torus can be arbitrary. The fluid domain is either
\bq\label{domain:i}
\Omega_\eta=\{(x, y)\in \T^d\times \Rr: y<\eta(x, t)\}
\eq
for infinite depth (deep fluid) or
\bq\label{domain:f}
\Omega_\eta=\{(x, y)\in \T^d\times \Rr: -b<y<\eta(x, t)\}
\eq
for finite depth above a flat bottom $\{y=-b\}$, where $b>0$ is a constant. To unify notation for the free boundary and the bottom, we denote the graph of a function $f:\T^d\to \Rr$ by
\bq
\Sigma_f=\{(x, f(x)): x\in\T^d\}.
\eq
In the finite depth case, the fluid velocity is tangent to the bottom, 
\bq
u_y(x, -b)=0,\quad\text{where } u_y:=u\cdot e_y.
\eq
Motivated by the aforementioned  experimental and theoretical works on viscous surface waves, we consider an external  pressure $\Psi(x, y, t)$ applying to the free boundary. For capillary-gravity  waves, the effect of surface tension is taken into account, and hence the dynamic boundary condition for the pressure is 
\bq\label{bc:p}
p=p_0+\sigma H(\eta)+\Psi \quad\text{on } \Sigma_{\eta},
\eq
where $p_0$ denotes the constant pressure of the region above $\Omega_\eta$, $\sigma>0$ is the surface tension coefficient, and 
\bq
H(\eta)=-\di \left(\frac{\na \eta}{\sqrt{1+|\na \eta|^2}}\right)
\eq
is twice the mean curvature of the free boundary. Finally, the free boundary moves with the fluid according to the kinematic boundary condition
\bq\label{kinematic}
\p_t \eta=u\cdot N\vert_{\Sigma_\eta},\quad N=(-\na \eta, 1).
\eq
Towards the construction of traveling waves we posit that the external pressure has the traveling wave form 
\bq\label{externalpressure}
\Psi(x, y, t)=\ka \varphi(x- \gamma \vec{e_1}t),
\eq
where $\ka \in \Rr$ is the amplitude parameter and $\gamma$ is the wave speed. This form of the external pressure is  to prepare for the involvement of global continuation theory. We have chosen  without loss of generality that the direction of wave propagation is $\vec{e_1}$. We have also chosen $\Psi$ to be independent of $y$, and we will show that this  suffices to furnish large traveling waves.  Then we accordingly make the traveling wave ansatz for the free boundary (by abuse of notation)
\bq
\eta(x, t)=\eta(x- \gamma \vec{e_1}t).
\eq
Introducing the Dirichlet-to-Neumann operator $G[\eta]$ for $\Omega_\eta$ (see Definition \ref{defi:DN}), the free boundary problem described above can be reformulated in the compact form 
\bq\label{eq:tw}
-\gamma \p_1\eta=-G[\eta](\sigma H(\eta)+g\eta+\ka \varphi),
\eq
where $\p_1=\p_{x_1}$. In the finite depth case, \eqref{eq:tw} must be coupled with the condition 
\bq\label{sep:cond}
\inf_{\T^d}(\eta+ b)>0.
\eq
See Section \ref{section:DN} for the derivation of \eqref{eq:tw}. We note that the constant pressure $p_0$ does not appear in \eqref{eq:tw} because $G[\eta]$ annihilates constants. 

Denoting by $\rC^{k, \a}(\T^d)$ the space of $C^{k, \a}$ functions on $\T^d$ with mean zero, our main result is  as follows.
\begin{theo}\label{theo:main}
We consider $\sigma>0$ and $g>0$, and assume that the fluid domain is either \eqref{domain:i} (infinite depth) or \eqref{domain:f} (finite depth)  with the separation condition \eqref{sep:cond}. Let $d\in \{1, 2\}$  and $\alpha \in (0, 1)$. Fix an arbitrary external pressure profile $\varphi \in \rC^{1, \a}(\T^d)\setminus\{0\}$ and an arbitrary wave speed $\gamma \in \Rr\setminus\{0\}$. The following assertions hold.

I. (\textbf{Small waves}) Near the trivial solution $(\eta, \ka)=(0, 0)$, \eqref{eq:tw} has a unique curve $\mathcal{C}_0$ of solutions $(\eta, \ka) \in \rC^{3, \a}(\T^d)\times \Rr$ parametrized by $|\ka|<\eps=\eps(\sigma, g, \gamma, d, b,  \a)$. Moreover, the curve is Lipschitz continuous.

II.  (\textbf{Large waves}) Let $\mathcal{C}$ denote the connected component of the set of all solutions $(\eta, \ka)\in \rC^{3, \a}(\T^d)\times \Rr$ of \eqref{eq:tw} to which the local curve $\mathcal{C}_0$ belongs. Then the following  holds.
\begin{itemize}
\item In the infinite depth case, $\mathcal{C}$ contains traveling waves $\eta$  that are arbitrarily large in $C^1(\T^d)$.
\item In the finite depth case, $\mathcal{C}$ contains traveling waves $\eta$ that are either  arbitrarily large in $C^1(\T^d)$  or arbitrarily close to the bottom. Precisely, the latter means that there exist  waves $\eta_n$ satisfying 
\bq\label{touch}
\lim_{n\to \infty}\inf_{x\in \T^d}(\eta_n(x)+b)=0.
\eq 
\end{itemize}
III. (\textbf{Regularity}) For any $k\ge 3$ and $\mu\in (0, 1)$, if $ \varphi \in \rC^{k-2, \mu}(\T^d)$ then $\mathcal{C}\subset \rC^{k, \mu}(\T^d)\times \Rr$. 
\end{theo}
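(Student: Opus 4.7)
The plan is a global bifurcation/continuation argument in the style of Rabinowitz and Healey--Simpson. Recast \eqref{eq:tw} as the abstract equation $F(\eta,\ka)=0$ with
\[
F(\eta,\ka):=-\gamma\p_1\eta+G[\eta]\bigl(\sigma H(\eta)+g\eta+\ka\varphi\bigr),
\]
viewed as a smooth map from $U\times \Rr$ into $\rC^{\a}(\T^d)$, where $U\subset \rC^{3,\a}(\T^d)$ is the open set of admissible mean-zero profiles (with the extra constraint $\inf_{x\in\T^d}(\eta(x)+b)>0$ in the finite-depth case). Smoothness of $F$ rests on the (by now classical) smooth dependence of $G[\eta]$ on the free surface together with the quasilinearity of $H$.

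\textbf{Part I (Small waves).} The linearization
\[
L:=D_\eta F(0,0)=-\gamma\p_1+G[0]\bigl(-\sigma\Delta+g\bigr)
\]
is a Fourier multiplier of symbol $-i\gamma k_1+\phi(k)(\sigma|k|^2+g)$ with $\phi(k)=|k|$ (infinite depth) or $\phi(k)=|k|\tanh(b|k|)$ (finite depth); its real part is strictly positive for $k\neq 0$ because $\sigma>0$. Hence $L$ is a bijection between mean-zero subspaces of $\rC^{3,\a}$ and $\rC^{\a}$ by Schauder theory for this third-order elliptic operator, and the implicit function theorem produces the local Lipschitz curve $\mathcal{C}_0$.

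\textbf{Part II (Large waves).} I would verify the hypotheses of a global continuation theorem for Fredholm maps of index zero:
\begin{itemize}
\item \emph{Fredholm index zero.} At each $(\eta,\ka)\in U\times \Rr$, $D_\eta F(\eta,\ka)$ has principal part equal to the composition of $G[\eta]$ with the linearization of $\sigma H(\eta)$, a third-order elliptic pseudodifferential operator whose principal symbol retains the positive-definite behavior of $L$; the remainder is compact from $\rC^{3,\a}$ into $\rC^{\a}$.
\item \emph{Properness on admissible closed sets.} Bounded sequences $(\eta_n,\ka_n)\subset U\times \Rr$ with $\inf_x(\eta_n(x)+b)\geq \delta>0$ are precompact in $\rC^{3,\a}\times \Rr$, via Schauder-type estimates for the elliptic principal part and the compact embedding $\rC^{3,\a}\hookrightarrow \rC^{3,\a'}$ for $\a'<\a$.
\end{itemize}
The continuation alternative then yields that the connected component $\mathcal{C}$ containing $\mathcal{C}_0$ satisfies either (i) $\mathcal{C}$ is unbounded in $\rC^{3,\a}(\T^d)\times \Rr$, or (ii) in the finite-depth case, there is a sequence $\eta_n\in \mathcal{C}$ realizing \eqref{touch}. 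I would then upgrade (i) to unboundedness in $C^1$ by contradiction: if $(\eta_n,\ka_n)\in \mathcal{C}$ had $\|\eta_n\|_{C^1}\le M$ and (in the finite-depth case) $\inf_x(\eta_n+b)\ge \delta>0$, then testing \eqref{eq:tw} against appropriate Fourier modes of $\varphi$ forces $|\ka_n|$ bounded, after which Schauder bootstrapping of \eqref{eq:tw} reupgrades $C^1$ bounds to $\rC^{3,\a}$ bounds, contradicting (i). To preclude $\mathcal{C}$ closing into a loop returning to $(0,0)$, I would invoke uniqueness on $\mathcal{C}_0$ together with an odd-multiplicity/parity argument at the trivial solution.

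\textbf{Part III (Regularity).} Rewriting \eqref{eq:tw} as
\[
G[\eta]\bigl(\sigma H(\eta)\bigr)=\gamma\p_1\eta-g\, G[\eta]\eta-\ka\, G[\eta]\varphi,
\]
Schauder theory for the third-order elliptic operator on the left, combined with $\varphi\in \rC^{k-2,\mu}$, increases the regularity of $\eta$ one derivative at a time starting from $\rC^{3,\a}$, saturating at $\eta\in \rC^{k,\mu}$.

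\textbf{Main obstacle.} The hardest step is verifying the Fredholm and properness properties of $F$ uniformly on $U$ while tracking how the constants deteriorate as $\inf_x(\eta(x)+b)\downarrow 0$ in the finite-depth case. This requires quantitative pseudodifferential analysis of $G[\eta]\circ H(\eta)$ in H\"older classes for possibly large $\eta\in \rC^{3,\a}$; the alternative \eqref{touch} is then precisely the statement that these estimates degenerate as the surface approaches the bottom. The subtle point is ensuring that the \emph{only} other obstruction is genuine $C^1$-blow-up (rather than loss of higher regularity at bounded $C^1$ norm), which is what the compactness bootstrap in Part II is designed to rule out.
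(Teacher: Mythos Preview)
Your overall architecture---global continuation from the local IFT curve, with a trichotomy of alternatives---matches the paper's, though you work in a Fredholm/proper framework whereas the paper inverts $G[\eta]$ and $\sigma H+gI$ to recast \eqref{eq:tw} as $\eta+F(\eta,\ka)=0$ with $F$ \emph{compact} on $\rC^{3,\a}$, then applies a Leray--Schauder global implicit function theorem. Either framework can in principle produce the same alternatives (unbounded / loop / boundary of $U$), so this difference is not by itself fatal. There are, however, two genuine gaps.

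First, your exclusion of the loop alternative does not work. This is not a bifurcation problem: there is no line of trivial solutions for $\ka\ne 0$, so there is no eigenvalue crossing and no ``odd multiplicity'' to invoke, and local uniqueness on $\mathcal{C}_0$ alone does not prevent $\mathcal{C}$ from returning to $(0,0)$. The paper instead proves a rigidity lemma: if $\ka=0$, multiplying \eqref{eq:tw} by $\sigma H(\eta)+g\eta$, integrating, and using positive-definiteness of $G[\eta]$ on $\rH^{1/2}$ forces $\sigma H(\eta)+g\eta=0$, hence $\eta=0$. Thus $\mathcal{C}\setminus\{(0,0)\}\subset V\times(\Rr\setminus\{0\})$, a disconnected set, so $\mathcal{C}\setminus\{(0,0)\}$ cannot be connected. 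You are missing this ingredient.

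Second, your upgrade ``$C^1$ bounds $\Rightarrow$ $\rC^{3,\a}$ bounds by Schauder bootstrapping'' skips the hard step. Schauder theory needs H\"older coefficients; a mere bound on $\|\eta\|_{C^1}$ puts the Neumann problem for $G[\eta]$ only in a Lipschitz domain, where classical Schauder estimates are unavailable. The paper closes this gap with the optimal $L^p$ Neumann theory of Dahlberg--Kenig: for $\eta$ Lipschitz one gets $(G[\eta])^{-1}\p_1\eta\in W^{1,p}$ for some $p>2$ depending on the Lipschitz constant, and then Morrey's embedding yields $C^{0,\mu}$ control---and this embedding is precisely where the dimension restriction $d\in\{1,2\}$ enters. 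From there $(\sigma H+gI)^{-1}$ lifts to $C^{2,\mu}$, contradicting the (separately established) unboundedness of $\|\eta\|_{C^{1,\beta}}$. Without the Dahlberg--Kenig input your bootstrap stalls at the $C^1\to C^{1,\a}$ gap, and your argument gives no hint of why the result should be special to $d\le 2$.
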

Assuming that \eqref{touch} holds, then since $\eta_n$ has mean zero, the mean value theorem implies that 
\bq
\liminf_{n\to \infty}\| \na\eta_n\|_{C(\T^d)}\ge \frac{b}{2\pi\sqrt{d}}.
\eq
Therefore, Part II of Theorem \ref{theo:main} implies that   there are traveling waves whose maximum gradients are unboundedly large in the infinite depth case and larger than $\frac{b}{4\pi \sqrt{d}}$ in the finite depth case. To the best of our knowledge, this is the first construction  of large traveling surface waves for a viscous free boundary problem. Some remarks about the statement of Theorem \ref{theo:main} are in order:
\begin{itemize}
\item[1)] If the pair $(\eta, \varphi)$ satisfies \eqref{eq:tw} then so does $(\eta, \varphi+c)$ for any constant $c$ since $G[\eta]c=0$. Therefore, we can assume without loss of generality that $\varphi$ has mean zero.
\item[2)] For $\eta\in \rC^{3, \a}$ and $\varphi \in \rC^{1, \a}$, equation \eqref{eq:tw} is satisfied in $C^{0, \a}$, hence $\eta$ is a classical solution. 
\item[3)] Part I will be proven in Proposition \ref{prop:smallTW} for the more general form $\Psi(x, y, t)=\psi(x-\gamma \vec{e_1}t)$ of the external pressure.
\item[4)] The proof of part II does not exclude the possibility that traveling waves with unboundedly large gradients correspond to a bounded  set of amplitudes of the external pressure. 
\item[5)] We will prove that Theorem \ref{theo:main} remains valid in dimension $d\ge 3$ but with the $C^1$ norm in part II replaced by $C^{1, \beta}$ for any $\beta\in (0, 1)$. 
\end{itemize}
\begin{rema}
Following the method in \cite{NguyenTice} for gravity waves,  the unique local curve of traveling capillary-gravity waves near $(\eta, \ka)=(0, 0)$ can be obtained for $\eta\in \mathcal{H}^s(\Rr^d)$, the anisotropic Sobolev space introduced in \cite{LeoniTice}. We do not pursue this issue since the main purpose of the present paper is to construct large traveling waves. 
\end{rema}
\begin{rema}
In Proposition \ref{prop:stlim} we prove that if $g>0$ and $\varphi\in \rC^{3, \a}(\T^d)$ then the smallness $\eps$ of the local curve in part I of Theorem \ref{theo:main} can be chosen uniformly in $\sigma \in (0, 1)$. This in turn allows us to establish the vanishing surface tension limit of the traveling capillary-gravity waves, where the limit is the unique traveling gravity wave. 
\end{rema}
\begin{rema}
In the absence of external pressure, i.e. $\Psi=0$ in \eqref{bc:p}, well-posedness of the dynamic Muskat problem \eqref{Darcy}-\eqref{kinematic} was established in \cite{Ng-st}. The method in \cite{Ng-st} can be adapted to prove well-posedness when $\Psi\ne 0$. We also refer to \cite{Amb, CorCorGan, CorCorGan2, CheGraShk, NgPa, AN, DGN, DGN2, APW, GGSP} and the references therein for results on the Muskat problem without surface tension ($\sigma =0$ and $g>0$). 
\end{rema}
We  now discuss the main ideas of the proof of Theorem \ref{theo:main}. In the class of classical $\rC^{3, \a}(\T^d)$ solutions,  we shall use two different reformulations of \eqref{eq:tw} to prove part I and part II  of Theorem \ref{theo:main}. Regarding the first part, we shall linearize the Dirichlet-to-Neumann  operator $G[\eta]$ and the mean curvature operator $H$ about zero,
\bq
G[\eta]f=m(D)f+R[\eta]f,\quad H(\eta)=-\Delta \eta+R_H(\eta),
\eq
where $m(D)=G[0]$ is the  Fourier multiplier
\[
m(\xi)=
\begin{cases}
|\xi|\tanh(b|\xi|)\quad\text{for finite depth},\\
|\xi|\quad\text{for infinite depth}.
\end{cases}
\]
Then we can reformulate the traveling wave equation \eqref{eq:tw} as the fixed point problem 
\bq
\begin{aligned}
 \eta&=K^{\sigma, g}_\psi(\eta)\\
 &:=[-\gamma\p_1+m(D)(-\sigma \Delta +g)]^{-1}\left\{-\sigma m(D)R_H(\eta)-R[\eta](\sigma H(\eta)+g\eta)- G[\eta]\psi\right\}
\end{aligned}
\eq
provided $\eta$ has mean zero. In order to prove that $K^{\sigma, g}_\psi$ is a contraction on a small ball in $\rC^{3, \a}(\T^d)$ for $\sigma>0$ and $g\ge 0$, we shall establish necessary  boundedness and contraction estimates for the remainders $R[\eta]$ and $R_H$  in H\"older spaces. See Propositions \ref{prop:linDN} and \ref{prop:linH}. These estimates actually hold for large $\eta$, and hence will be  useful in the construction of large traveling waves in Part II.

As for the proof of part II, the key tool is  a Global Implicit Function Theorem (GIFT) based upon the Leray-Schauder degree theory. This tool has recently been employed in \cite{StraussWu, StraussWu2, StraussWu3} to construct rapidly rotating solutions for models of stars and galaxies. An important condition of this GIFT is that the equation is a compact perturbation of identity. For our equation \eqref{eq:tw} we shall achieve this by establishing the invertibility in H\"older spaces of the capillarity-gravity operator $\sigma H+gI$ and the Dirichlet-to-Neumann operator $G[\eta]$ for any (large) $\eta$. See Propositions \ref{theo:iso:G} and \ref{theo:iso:H}. This allows us to obtain the second reformulation of  \eqref{eq:tw}, namely,
\bq
\cF(\eta, \ka):=\eta+F(\eta, \ka)=0,
\eq
where 
\bq
F(\eta, \ka)= - (\sigma H+g I)^{-1}(G[\eta])^{-1} (\gamma\p_1\eta)+  (\sigma H+g I)^{-1}(\ka \varphi)
\eq
 is a compact operator on $\rC^{3, \a}(\T^d)$ provided  $\varphi\in \rC^{1, \a}(\T^d)$. The compactness of $F$ essentially comes from the fact that the operator $(G[\eta])^{-1} \p_1$ is of order zero and the operator $(\sigma H+g I)^{-1}$ is of order $-2$. Although it is not clear if the mapping $\eta\mapsto (G[\eta])^{-1}$ is $C^1$ (in the Fr\'etchet sense) near $\eta=0$, we shall  prove that $(G[\eta])^{-1}\p_1\eta$ is Fr\'etchet differentiable at $\eta=0$ and so is $F$. Moreover, we have that
\[
D_\eta\cF(0, 0)=I-\gamma(-\sigma \Delta+gI)^{-1}m^{-1}(D)\p_1
\]
is invertible on $\rC^{3, \a}(\T^d)$. Fortunately, these conditions suffice to apply the version of the GIFT stated in Theorem \ref{GIFT}. It yields a connected set $\mathcal{C}\subset \rC^{3, \a}(\T^d)\times \Rr$ of solutions to the equation $\cF(\eta, \ka)=0$ to which the local curve in part (1) belongs. In the infinite depth case, we obtain two alternatives: either (i)  $\mathcal{C}$ is unbounded, or (ii) $\mathcal{C}\setminus\{(0, 0)\}$ is connected. The  second alternative is of the least interest to us because it could mean that $\mathcal{C}$ is a loop which does not contain unboundedly large solutions. To exclude this scenario, we shall prove that $\eta=0$ is the unique solution when $\ka=0$ (i.e. no external pressure). {\it This is a common feature in free boundary viscous problems: without additional external forces, the only traveling wave solution is the trivial one.  On the other hand, it is this feature which precludes the "loop" scenario in the GIFT.} We conclude that  (i) holds, i.e. $\mathcal{C}$ is unbounded. If $\ka$ is bounded in $\mathcal{C}$, then $\eta$ must be unbounded in $\mathcal{C}$. On the other hand, if $\ka$ is unbounded in $\mathcal{C}$, we need  to prove that $\eta$ is again unbounded $\mathcal{C}$.  We shall first prove that in both cases the $C^{1, \beta}$ norm of $\eta$ is unbounded in $\mathcal{C}$ for any $\beta\in (0, 1)$ and $d\ge 1$. For the physical dimensions $d=1, 2$, in order to obtain the unboundedness of the maximum gradient, i.e. the $C^1$ norm, of $\eta$ in $\mathcal{C}$, we appeal to the optimal solvability of the Neumann problem in Lipschitz domains \cite{DahlbergKenig}.  In the finite depth case, an additional alternative is that $\mathcal{C}$ contains a sequence of waves approaching the bottom. 

 The remainder of this paper is organized as follow. Sections \ref{section:DN} and \ref{section:H} are devoted to the invertibility, linearization and contraction of the Dirichlet-to-Neumann operator and the capillary-gravity operator $\sigma H(\eta)+gI$. Theorem \ref{theo:main} is proven in Section \ref{section:proof}. Finally, some facts about Fourier multipliers in H\"older spaces are recorded in Appendix \ref{appendix}.
\section{The Dirichlet-to-Neumann operator}\label{section:DN}
We first set notation for Sobolev spaces of distributions with mean zero. \begin{defi}
For $s\ge 0$, we set 
\begin{align}
&\rH^s(\T^d)=\left\{f\in H^s(\T^d): \int_{\T^d} f=0\right\},\\ \label{def:rH:neg}
&\rH^{-s}(\T^d)=\left\{f\in H^{-s}(\T^d): \langle f, 1\rangle_{H^{-s}, H^s}=0\right\},
\end{align}
where $H^{-s}(\T^d)$ denotes the dual of $H^s(\T^d)$. 
\end{defi}
\subsection{Invertibility}
\begin{defi}\label{defi:DN}
The Dirichlet-to-Neumann operator $G[\eta]$ associated to the domain $\Omega_\eta$ is defined by 
\bq
G[\eta]f= \na q\cdot N\vert_{\Sigma_\eta}
\eq
where $N=(-\na \eta, 1)$ is normal to $\Sigma_f$, and in the finite depth case $q$ is the solution to the problem
\bq\label{elliptic:DN}
\begin{cases}
\Delta q=0\quad\text{in } \Omega_\eta,\\
q\vert_{\Sigma_\eta}=f,\\
\p_yq\vert_{\Sigma_{-b}}=0,
\end{cases}
\eq 
while in the the infinite depth case $q$ is the solution to the problem 
\bq\label{elliptic:DN:i}
\begin{cases}
\Delta q=0\quad\text{in } \Omega_\eta,\\
q\vert_{\Sigma_\eta}=f,\\
 \na q\in L^2(\Omega_\eta).
\end{cases}
\eq
\end{defi}
We note that  \eqref{elliptic:DN} and \eqref{elliptic:DN:i} are invariant under the transformation  $(q, c)\mapsto  (q+c, f+c)$ for any $c\in \Rr$. In addition, $\na q$ is also invariant under $q\mapsto q+c$. This observation allows us to restrict the domain of the Dirichlet-to-Neumann operator to functions of mean zero.

Assume that $\eta \in W^{1, \infty}(\T^d)$. In the finite depth case, \eqref{elliptic:DN} has a unique solution in $H^1(\Omega_\eta)$ for any $f\in H^\mez(\T^d)$. In the infinite depth case, using the continuity and lifting results for the trace operator $\dot H^1(\Omega_\eta)\to \dot H^\mez(\T^d)$ (see Theorems 18.27 and 18.28 in \cite{Leoni}), it follows that \eqref{elliptic:DN:i} has a unique solution in $\dot H^1(\Omega_\eta)$ for any $f\in \dot H^\mez(\T^d)$. Here, for any open subset $U\subset \Rr^n$ we define
\bq
\dot H^1(U)=\{ u\in L^2_{loc}(U): \na u\in L^2(U)\}/\Rr.
\eq
We refer to Propositions 3.4 and 3.6 in \cite{NgPa} for the proofs. 

With the Dirichlet-to-Neumann operator defined above, we  now derive the traveling wave equation \eqref{eq:tw}. In Darcy's law $u+\na_{x, y}p=-g \vec{e_y}$, setting $q=p+g y$ we have $u=-\na_{x, y}q$. Using the dynamic condition \eqref{bc:p} with $\Psi=\Psi(x, t)$, we find 
\[
q\vert_{\Sigma_\eta}=p_0+\sigma H(\eta)+\Psi+g\eta,
\]
and thus 
\[
 \na_{x, y}q\cdot N\vert_{\Sigma_\eta}=G[\eta](p_0+\sigma H(\eta)+\Psi+g\eta)=G[\eta](\sigma H(\eta)+g\eta+\Psi).
\]
Then the kinematic boundary condition \eqref{kinematic} yields
\bq\label{dynamic:eta}
\p_t\eta=u\cdot N\vert_{\Sigma_\eta}= -\na_{x, y}q\cdot N\vert_{\Sigma_\eta}=-G[\eta](\sigma H(\eta)+g\eta+\Psi).
\eq
Assuming that $\Psi(x, t)=\ka \varphi(x- \gamma \vec{e_1}t)$ as in \eqref{externalpressure} and imposing the traveling wave ansatz $\eta(x, t)=\eta(x- \gamma \vec{e_1}t)$ (by abuse of notation), we arrive at the traveling wave equation \eqref{eq:tw} for the profile $\eta$.

Next, we recall the following Stokes formula (see Section 3.2, Chapter IV, \cite{BoyerFabrie}). 
\begin{prop}
Let $U\subset \Rr^n$ be Lipschitz domain with compact boundary and denote 
\bq
H_{\di}(U)=\{u\in L^2(U)^n: \di u\in L^2(U)\}.
\eq
 If $u\in H_{\di}(U)$ and $w\in H^1(U)$, then 
\bq\label{Stokes}
\int_U u\cdot \na w+\int_U w\di u=\langle \gamma_\nu(u), \gamma_0(w)\rangle_{H^{-\mez}(\p U), H^\mez(\p U)}
\eq
where $\gamma_0(w)$ is the trace of $w$ and $\gamma_\nu (u)$ is the trace of  $u\cdot \nu$, $\nu$ being the unit outward normal to $\p U$. The trace operator 
\bq\label{normaltrace}
\gamma_\nu:\quad H_{\di}(U)\to H^{-\mez}(\p U)
\eq
is continuous.
\end{prop}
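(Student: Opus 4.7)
The plan is to reduce to the smooth case by density and then use the classical Gauss--Green theorem, with the normal trace $\gamma_\nu$ constructed via duality against the $H^1$-trace operator $\gamma_0$. Since $U$ is Lipschitz with compact boundary, I would first invoke the standard density result that $C^\infty(\overline U)^n$ is dense in $H_{\di}(U)$ with respect to the graph norm $\|u\|_{L^2}+\|\di u\|_{L^2}$. For $u,w\in C^\infty(\overline U)$, the identity
\bq\label{smoothStokes}
\int_U (u\cdot \na w + w\,\di u)\,dx = \int_{\p U} (u\cdot \nu)\, w\, dS
\eq
is just the divergence theorem applied to the vector field $uw$.

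To give meaning to $\gamma_\nu(u)$ for a general $u\in H_{\di}(U)$, I would proceed by duality. Fix smooth $u$ and let $\phi\in H^{1/2}(\p U)$. Using a continuous right inverse $E:H^{1/2}(\p U)\to H^1(U)$ of the trace $\gamma_0$, pick $W=E\phi$ with $\|W\|_{H^1(U)}\le C\|\phi\|_{H^{1/2}(\p U)}$, and define
\bq
\langle \gamma_\nu u,\phi\rangle_{H^{-1/2},H^{1/2}} := \int_U (u\cdot \na W + W\,\di u)\,dx.
\eq
Independence of the choice of extension follows because any two extensions differ by an element of $H^1_0(U)$, and for $V\in H^1_0(U)$ approximated by $C^\infty_c(U)$ functions, \eqref{smoothStokes} gives $\int_U (u\cdot\na V + V\,\di u)=0$. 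Cauchy--Schwarz yields
\bq
|\langle \gamma_\nu u,\phi\rangle|\le \|u\|_{H_\di(U)}\|W\|_{H^1(U)}\le C\|u\|_{H_\di(U)}\|\phi\|_{H^{1/2}(\p U)},
\eq
so $\gamma_\nu$ is a bounded operator from the dense subspace $C^\infty(\overline U)^n$ of $H_{\di}(U)$ into $H^{-1/2}(\p U)$, and by the density step above it extends to a continuous map $\gamma_\nu:H_{\di}(U)\to H^{-1/2}(\p U)$. For smooth $u$, a comparison with \eqref{smoothStokes} shows this extension agrees with $u\cdot\nu|_{\p U}$, justifying the notation.

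With $\gamma_\nu$ in hand, the formula \eqref{Stokes} is proved as follows. For smooth $u$ and arbitrary $w\in H^1(U)$, both sides of \eqref{Stokes} are continuous in $w$ (the right-hand side because $\gamma_0:H^1(U)\to H^{1/2}(\p U)$ is continuous), so the identity extends from $w\in C^\infty(\overline U)$ to $w\in H^1(U)$ by density. Then for fixed $w\in H^1(U)$, both sides are continuous in $u\in H_{\di}(U)$ (by construction of $\gamma_\nu$), so \eqref{Stokes} extends from $u\in C^\infty(\overline U)^n$ to all of $H_{\di}(U)$.

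The only non-routine point is the density of $C^\infty(\overline U)^n$ in $H_{\di}(U)$ for a Lipschitz domain; this is the main obstacle since Lipschitz boundaries are not $C^1$, but it is a classical fact (see, e.g., Temam's treatise on the Navier--Stokes equations), obtained by a partition-of-unity localization near the boundary, a bi-Lipschitz flattening, and convolution with a mollifier translated in the direction transverse to the flattened boundary so as to keep the approximations supported in $\overline U$. Everything else in the argument is soft functional analysis.
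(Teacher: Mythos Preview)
Your argument is correct and is essentially the standard textbook proof of this result. The paper itself does not give a proof; it merely recalls the statement with a reference to Boyer--Fabrie (Section~3.2, Chapter~IV), so there is nothing in the paper to compare your proof against.

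One small remark on efficiency: you can bypass the density of $C^\infty(\overline U)^n$ in $H_{\di}(U)$ entirely by defining $\gamma_\nu(u)$ directly via the duality formula
\[
\langle \gamma_\nu u,\phi\rangle_{H^{-1/2},H^{1/2}} := \int_U (u\cdot \na W + W\,\di u)\,dx,\qquad W=E\phi,
\]
for \emph{all} $u\in H_{\di}(U)$ at once. Well-definedness (independence of the extension $W$) follows because for $V\in H^1_0(U)$ and $u\in H_{\di}(U)$ one has $\int_U(u\cdot\na V + V\,\di u)=0$ directly from the distributional definition of $\di u$ tested against $C^\infty_c(U)$ functions and then by density of $C^\infty_c(U)$ in $H^1_0(U)$; no smoothness of $u$ is needed. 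The Stokes identity \eqref{Stokes} is then immediate, since $w$ itself is an admissible extension of $\gamma_0(w)$. The density result you invoke is then only used to check that $\gamma_\nu$ agrees with the classical normal trace on smooth fields, which is a consistency statement rather than part of the proof of \eqref{Stokes} and \eqref{normaltrace}. Either route is fine.
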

For Propositions \ref{DN:positive} and \ref{theo:iso:G} we assume in the finite depth case that 
\bq
\inf_{x\in \T^d}(\eta(x)+b)\ge c^0>0.
\eq
\begin{prop}\label{DN:positive}
If $\eta \in W^{1, \infty}(\T^d)$ then $G[\eta]: \rH^\mez(\T^d)\to \rH^{-\mez}(\T^d)$ is an isomorphism and is positive-definite. Moreover, we have
\bq\label{inverseDN:low}
\| (G[\eta])^{-1}\|_{ \rH^{-\mez}\to  \rH^\mez}\le M(\| \eta\|_{W^{1, \infty}})
\eq
for some $M:\Rr_+\to \Rr_+$ depending only on $(d, b, c^0)$.
\end{prop}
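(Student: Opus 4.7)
The plan is to realize $G[\eta]$ as the operator associated, via Riesz representation, to the symmetric bilinear form
\[
B(f,g) := \int_{\Omega_\eta} \na q_f \cdot \na q_g \, \d x \d y, \qquad f,g \in \rH^\mez(\T^d),
\]
where $q_f, q_g$ denote the harmonic extensions of $f, g$ solving \eqref{elliptic:DN} or \eqref{elliptic:DN:i}. Once $B$ is shown to be continuous, symmetric and coercive on $\rH^\mez(\T^d)$ with constants depending only on $\|\eta\|_{W^{1,\infty}}$ (and on $b, c^0$ in the finite depth case), the Lax--Milgram theorem yields the isomorphism $G[\eta]:\rH^\mez(\T^d) \to \rH^{-\mez}(\T^d)$ together with the quantitative bound \eqref{inverseDN:low}.

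First I would establish the variational identity $\langle G[\eta]f, g\rangle_{\rH^{-\mez}, \rH^\mez} = B(f,g)$ by applying the Stokes formula \eqref{Stokes} with $u = \na q_f \in H_{\di}(\Omega_\eta)$ and $w = q_g \in H^1(\Omega_\eta)$. Since $\Delta q_f = 0$, the volume integral involving $\di(\na q_f)$ vanishes, and the boundary contribution on $\Sigma_{-b}$ vanishes thanks to the Neumann condition $\p_y q_f|_{\Sigma_{-b}}=0$. On $\Sigma_\eta$ the Jacobian $|N|=\sqrt{1+|\na\eta|^2}$ converts the surface integral of $\gamma_\nu(\na q_f)\gamma_0(q_g)$ against $d\sigma$ into a flat integral of $(\na q_f\cdot N)g$ over $\T^d$, which is exactly $\langle G[\eta]f,g\rangle$. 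In the infinite depth case the same identity follows by truncating $\Omega_\eta$ at $\{y>-R\}$, applying the formula, and letting $R\to\infty$; the artificial bottom term vanishes because $\na q_f \in L^2(\Omega_\eta)$. Symmetry of $B$ is then manifest, and positivity $B(f,f)=\|\na q_f\|_{L^2}^2\ge 0$ with equality forcing $q_f$ constant and hence $f\equiv 0$ (as $f$ has mean zero) shows strict positive-definiteness of $G[\eta]$.

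The main obstacle is a quantitative coercivity bound $B(f,f) \ge c(\|\eta\|_{W^{1,\infty}},b,c^0)\|f\|_{\rH^\mez}^2$. I would establish this by flattening the fluid domain via the diffeomorphism $\Phi(x,z) = (x, z + \eta(x))$, which sends either $\T^d\times(-\infty,0)$ (infinite depth) or a fixed slab (finite depth, after a further linear rescaling in $z$ using the lower bound $\eta+b\ge c^0$) onto $\Omega_\eta$. The pushed-forward Dirichlet integral becomes an anisotropic quadratic form on the fixed cylinder, uniformly comparable to the flat Dirichlet energy with constants controlled by $\|\na\eta\|_{L^\infty}$ and $c^0$. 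On the fixed cylinder, the trace inequality combined with the Poincar\'e--Wirtinger inequality (applicable because the constant ambiguity in $q_f$ can be normalized by requiring $q_f\circ\Phi$ to have mean zero on the slab, after which the mean-zero hypothesis on $f$ propagates the comparison) yields $\|f\|_{\rH^\mez(\T^d)}^2\lesssim \|\na(q_f\circ\Phi)\|_{L^2}^2$, which is the desired coercivity after transferring back to $\Omega_\eta$.

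Continuity of $B$ is cheap via the energy-minimizing property of $q_f$: any $H^1$ lift $\tilde q_f$ of $f$ with $\|\tilde q_f\|_{H^1(\Omega_\eta)}\lesssim \|f\|_{H^\mez(\T^d)}$ (the implicit constant depending on $\|\eta\|_{W^{1,\infty}}$) gives $\|\na q_f\|_{L^2}\le \|\na\tilde q_f\|_{L^2}$. Lax--Milgram then delivers the isomorphism, and writing $f=(G[\eta])^{-1}h$ one chains
\[
\|f\|_{\rH^\mez}^2 \le \tfrac{1}{c}B(f,f) = \tfrac{1}{c}\langle h,f\rangle \le \tfrac{1}{c}\|h\|_{\rH^{-\mez}}\|f\|_{\rH^\mez},
\]
from which \eqref{inverseDN:low} follows with $M(r)$ absorbing the dependence of $c^{-1}$ on $r = \|\eta\|_{W^{1,\infty}}$ (and on $b, c^0$). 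The delicate point throughout is tracking how $c$ degrades as $\|\eta\|_{W^{1,\infty}}\to\infty$ or $c^0\to 0$, which is precisely what the unspecified function $M$ is designed to absorb.
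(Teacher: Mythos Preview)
Your approach is correct but differs from the paper's. You package everything into the Lax--Milgram theorem via coercivity of the Dirichlet bilinear form $B$, whereas the paper only proves positive-definiteness here (exactly as you do, via the Stokes formula and the observation that $B(f,f)=\|\na q_f\|_{L^2}^2$ vanishes only if $f$ is constant) and then \emph{defers} surjectivity and the norm bound \eqref{inverseDN:low} to the proof of Proposition~\ref{theo:iso:G}. There the inverse is constructed explicitly by solving the Neumann problem $\Delta q=0$, $\p_N q\vert_{\Sigma_\eta}=h$ (with $\p_y q\vert_{\Sigma_{-b}}=0$ in finite depth, respectively $\na q\in L^2$ in infinite depth) and setting $(G[\eta])^{-1}h=q\vert_{\Sigma_\eta}$; the variational $H^1$/$\dot H^1$ estimate on $q$ then gives \eqref{inverseDN:low}. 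Your route is more self-contained at the $H^{\pm 1/2}$ level and avoids the separate Neumann solvability argument, while the paper's construction has the advantage of feeding directly into the $C^{k,\alpha}$ theory of Proposition~\ref{theo:iso:G}, since the same Neumann solution upgrades under Schauder estimates. One minor point: in the infinite-depth case your ``mean zero on the slab'' normalization is not quite the right mechanism; the cleaner statement is the $\dot H^1(\Omega_0)\to \dot H^{1/2}(\T^d)$ trace bound (cited in the paper from \cite{Leoni}), which together with the equivalence $\|f\|_{\dot H^{1/2}}\asymp\|f\|_{H^{1/2}}$ for mean-zero $f$ on $\T^d$ gives the coercivity you need without any Poincar\'e step.
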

\begin{proof}
We first note that $G[\eta]$ maps $H^\mez(\T^d)$ continuously to $H^{-\mez}(\T^d)$ by virtue of the trace \eqref{normaltrace}. Indeed, for both the finite and infinite depth cases we can apply  \eqref{normaltrace} with $u=\na q\in H_{\di}(\Omega_f)$ since $\na q\in L^2(\Omega_f)$ and $\Delta q=0$.

{\it The finite depth case.}  Let $f\in \rH^\mez(\T^d)$ and let $q\in H^1(\Omega_f)$ be the  unique solution of \eqref{elliptic:DN}.  An application of the Stokes formula \eqref{Stokes} with $u=\na q$ and $w=1$ yields $ \langle G[\eta]f, 1\rangle_{H^{-\mez}, H^\mez}=0$, hence $G[\eta]$ maps $H^\mez(\T^d)$ to $\rH^{-\mez}(\T^d)$. On the other hand, applying \eqref{Stokes} with $u=\na q$ and $w=q$, we find
\bq
\langle G[\eta]f, f\rangle_{H^{-\mez}, H^\mez}=\int_{\Omega_f}|\na q|^2\ge 0.
\eq 
Assume now that $\langle G[\eta]f, f\rangle_{H^{-\mez}, H^\mez}=0$. It follows that  $q$ is constant and so is $f=q\vert_{\Sigma_\eta}$, hence $f=0$.  Thus $G[\eta]$ is positive-definite.

{\it The infinite depth case.} Let $f\in \rH^\mez(\T^d)$ and let $q\in \dot H^1(\Omega_\eta)$ be the unique solution of \eqref{elliptic:DN:i}. To obtain $ \langle G[\eta]f, 1\rangle_{H^{-\mez}, H^\mez}=0$ we cannot apply  \eqref{Stokes} with $w=1\notin L^2(\Omega_\eta)$. This can be remedied by approximating $1$ by $w(x, y)=\chi(y/n)$ where $\chi=1$ on $(-1, \infty)$ and $\chi$ vanishes on $(-\infty, -2)$. Similarly, to obtain the positivity of $G[\eta]$ we apply \eqref{Stokes} with $w(x, y)=q(x, y)\chi(y/n)$ and let $n\to \infty$.  

Since $G[\eta]$ is positive-definite, it is injective. The surjectivity of $G[\eta]$ and the norm estimate \eqref{inverseDN:low} for the inverse  will be proven in the proof of Proposition \ref{theo:iso:G} below.
\end{proof}
\begin{prop}\label{theo:iso:G}
For any $k\ge 1$ and $\a\in (0, 1)$, if $\eta\in C^{k, \a}(\T^d)$ then $G[\eta]: \rC^{k, \a}(\T^d)\to \rC^{k-1, \a}(\T^d)$ is an isomorphism and 
\bq\label{inverseDN}
\| (G[\eta])^{-1}\|_{\rC^{k-1, \a}\to \rC^{k, \a}}\le M(\| \eta\|_{C^{k, \a}})
\eq
for some $M:\Rr_+\to \Rr_+$ depending only on $(d, b, c^0, k, \a)$. 
\end{prop}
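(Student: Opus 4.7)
My plan is to establish the isomorphism by combining the low-regularity invertibility already furnished by Proposition \ref{DN:positive} with elliptic regularity for the associated Neumann problem in H\"older spaces. Throughout, I would flatten $\Omega_\eta$ via the diffeomorphism $(x,z)\mapsto (x, z+\eta(x))$, which sends $\Omega_\eta$ onto the fixed strip $\T^d\times(-b-\eta(x),0)$ (finite depth) or half-space $\T^d\times(-\infty,0)$ (infinite depth), turning the Laplacian into a divergence-form elliptic operator with coefficients polynomial in $\na\eta$ and hence of class $C^{k-1,\a}$.

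\emph{Continuity.} Given $g\in\rC^{k,\a}(\T^d)$, I would invoke classical Schauder estimates for the transformed Dirichlet problem on the flattened domain to conclude $q\in C^{k,\a}$ up to the top boundary, with the quantitative bound $\|G[\eta]g\|_{C^{k-1,\a}}\le M(\|\eta\|_{C^{k,\a}})\|g\|_{C^{k,\a}}$. In the infinite depth case, the decay of $\na q$ is handled either by Fourier analysis in $x$ (each Fourier mode decays like $e^{|\xi|z}$) or by interior Schauder applied on progressively larger strips. That $G[\eta]g$ has mean zero follows as in the proof of Proposition \ref{DN:positive}.

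\emph{Injectivity} is immediate from Proposition \ref{DN:positive} combined with the embedding $\rC^{k,\a}(\T^d)\hookrightarrow \rH^{1/2}(\T^d)$.

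\emph{Surjectivity via the Neumann problem.} Given $f\in\rC^{k-1,\a}(\T^d)$ with mean zero, I would solve the Neumann problem
\begin{equation*}
\Delta q=0\ \text{in}\ \Omega_\eta,\quad \na q\cdot N\vert_{\Sigma_\eta}=f,\quad \p_y q\vert_{\Sigma_{-b}}=0
\end{equation*}
(replacing the last condition by $\na q\in L^2$ in the infinite depth case). The mean-zero assumption on $f$ is precisely the compatibility condition for variational solvability in $\dot H^1(\Omega_\eta)$; the existence and uniqueness of a mean-zero trace solution follow from Lax--Milgram in the same functional framework used in Section \ref{section:DN}. Setting $g:=q\vert_{\Sigma_\eta}$ (mean zero) gives $G[\eta]g=f$ by definition. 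Schauder theory for the Neumann problem applied to the flattened equation yields $q\in C^{k,\a}$ up to the free boundary, whence $g\in\rC^{k,\a}(\T^d)$.

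\emph{Norm estimate.} The Schauder estimate produces
\begin{equation*}
\|g\|_{C^{k,\a}}\le M(\|\eta\|_{C^{k,\a}})\bigl(\|f\|_{C^{k-1,\a}}+\|g\|_{L^2}\bigr),
\end{equation*}
and the low-order $L^2$ term is absorbed via
\begin{equation*}
\|g\|_{L^2}\le C\|g\|_{H^{1/2}}\le M(\|\eta\|_{W^{1,\infty}})\|f\|_{H^{-1/2}}\le M(\|\eta\|_{W^{1,\infty}})\|f\|_{C^{k-1,\a}},
\end{equation*}
where the middle inequality is \eqref{inverseDN:low}. This yields \eqref{inverseDN}.

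\emph{Main obstacle.} The principal technical step is the quantitative Schauder estimate for the Neumann problem in $\Omega_\eta$ with the operator norm tracked explicitly in terms of $\|\eta\|_{C^{k,\a}}$, especially for the vertically unbounded domain of the infinite depth case; flattening plus either Fourier decay (exploiting that the transformed equation is a compactly supported perturbation of $-\partial_z^2-\Delta_x$ for large $|z|$) or truncation combined with interior Schauder and exponential decay bounds for harmonic functions with zero Neumann data at infinity is, I expect, the cleanest route around it.
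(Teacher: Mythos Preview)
Your proposal is correct and follows essentially the same route as the paper: continuity via Schauder, injectivity inherited from Proposition~\ref{DN:positive}, surjectivity by solving the Neumann problem variationally and then upgrading to $C^{k,\a}$ by elliptic regularity, with the low-order term absorbed through \eqref{inverseDN:low}. One small slip: your flattening $(x,z)\mapsto(x,z+\eta(x))$ in the finite depth case does \emph{not} produce a fixed strip (the bottom becomes $z=-b-\eta(x)$), so either use the map $\rho(x,z)=\frac{z+b}{b}\eta(x)+z$ as in Proposition~\ref{prop:linDN}, or simply cite Schauder theory directly on $\Omega_\eta$ (which already has a flat bottom and a $C^{k,\a}$ top) as the paper does.
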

\begin{proof}
By  Schauder's estimates,  $G[\eta]$ maps $C^{k, \a}(\T^d)$ to  $C^{k-1, \a}(\T^d)$ continuously. Recalling from the proof of Proposition \ref{DN:positive} that $G[\eta]: \rH^\mez(\T^d)\to \rH^{-\mez}(\T^d)$ is injective, we deduce that $G[\eta]: \rC^{k, \a}(\T^d)\to \rC^{k-1, \a}(\T^d)$ is also injective. 

To prove the surjectivity of $G[\eta]$ in the finite depth case, we note that for any $h\in \rH^\mez\T^d)$, the Neumann problem 
\bq\label{eq:Neumann}
\begin{cases}
\Delta q=0\quad\text{in } \Omega_\eta,\\
\p_Nq\vert_{\Sigma_\eta}=h,\\
\p_yq\vert_{\Sigma_{-b}}=0
\end{cases}
\eq
has a unique solution $q\in H^1(\Omega_\eta)$ with mean zero and 
\[
\| q\|_{H^1(\Omega_\eta)}\le M(\| \eta\|_{W^{1, \infty}})\| h\|_{H^{-\mez}},
\]
where $M$ depends only on $(d, b, c^0)$. Then we set $f=q\vert_{\Sigma_\eta}\in  H^\mez(\T^d)$, so that $G[\eta]f=h$.  As remarked after Definition \ref{defi:DN},   without changing $G[\eta]f$ we can subtract the mean of $f$ so that $f\in \rH^\mez(\T^d)$. Therefore $G[\eta]: \rH^{\mez}(\T^d)\to \rH^{-\mez}(\T^d)$ is bijective and its inverse is bounded as in \eqref{inverseDN:low}.  If $h\in C^{k-1, \a}(\T^d)$ with $k\ge 1$, then classical elliptic regularity gives $q\in C^{k, \a}(\ol{\Omega_\eta)}$ and 
\[
\| q\|_{C^{k, \a}(\ol{\Omega_\eta)}}\le M(\| \eta\|_{C^{k, \a}})\| h\|_{C^{k-1, \a}},
\]
where $M$ depends only on $(d, b, c^0, k, \a)$. See Section 6.7 in \cite{GilTru}.  Consequently $f=(G[\eta])^{-1}h\in C^{k, \a}(\T^d)$, and thus  $G[\eta]: \rC^{k, \a}(\T^d)\to \rC^{k-1, \a}(\T^d)$ is bijective and its inverse satisfies \eqref{inverseDN}.

Next we consider the infinite depth case. Since the domain is unbounded we do not have the same solvability of the Neumann problem as in the finite depth case.  For $h\in  \rH^{-\mez}(\T^d)$ we consider the Neumann problem
\bq
\begin{cases}
\Delta q=0\quad\text{in } \Omega_\eta,\\
\p_Nq\vert_{\Sigma_\eta}=h,\\
 \na q\in L^2(\Omega_f).
\end{cases}
\eq
Assuming  that $q$ is a smooth solution, then for any test function $\phi$ vanishing at large depth, we obtain after integration by parts that 
\bq\label{variational:Neumann}
\int_{\Omega_\eta}\na q\cdot \na\phi =\langle h, \phi\vert_{\Sigma_\eta}\rangle_{H^{-\mez}(\T^d), H^\mez(\T^d)}=\langle h, \phi\vert_{\Sigma_\eta}\rangle_{H^{-\mez}(\T^d), \rH^\mez(\T^d)},
\eq
where we have used the fact that $\langle h, 1\rangle_{H^{-\mez}, H^\mez}=0$.  For the infinite depth domain $\Omega_f$, the trace operator is continuous from $\dot H^1(\Omega_\eta)$ to $\dot H^\mez(\T^d)$ and we identify $\dot H^\mez(\T^d)$ with $\rH^\mez(\T^d)$ by choosing the representative in each equivalence class to have mean zero.  Then the variational formulation \eqref{variational:Neumann} has a unique solution $q\in \dot H^1(\Omega_\eta)$ with 
\[
\| q\|_{\dot H^1(\Omega_\eta)}\le M(\|\eta \|_{W^{1, \infty}})\| h\|_{H^{-\mez}}.
\]
 Defining $f=q\vert_{\Sigma_\eta}\in \rH^\mez(\T^d)$ we obtain that $G[\eta]f=h$. This means that $G[\eta]$ maps $\rH^\mez(\T^d)$ onto $\rH^{-\mez}(\T^d)$. Now if  $h\in \rC^{k-1, \a}$ with $k\ge 1$, then classical elliptic regularity implies that $q\in C^{k, \a}$ on the closure of any bounded subset of $\Omega_{\eta}$, hence $f\in \rC^{k, \a}(\T^d)$. This completes the proof of the proposition.
\end{proof}
\subsection{Linearization and contraction}
\begin{defi}
For $a:\Rr^d\to \Cc$, the Fourier multiplier $a(D)$ on $\T^d$ is defined by
\bq
\widehat{a(D)f}(k)=a(k)\hat{f}(k),\quad k\in \Zz^d,
\eq
where the Fourier transform on $\T^d$ is given by 
\[
\hat{f}(k)=\int_{\T^d}f(x)e^{-ik\cdot x}dx,\quad k\in \Zz^d.
\]
\end{defi}
\begin{prop}\label{prop:linDN}
Denote 
\bq\label{def:m}
m(\xi)=
\begin{cases}
|\xi|\tanh(b|\xi|)\quad\text{for finite depth},\\
|\xi|\quad\text{for infinite depth}.
\end{cases}
\eq
Let $k\ge 1$ and $\a\in (0, 1)$. Let   $\eta \in \rC^{k, \a}(\T^d)$ and assume   that  $\inf_{x\in \T^d} (\eta(x)+b)\ge c^0>0$ in the finite depth case. There exists $M: \Rr_+\to \Rr_+$ depending only on $(d, b, c^0,  k, \a)$ such that for any $f\in \rC^{k, \a}(\T^d)$ we have
\bq\label{lin:DN}
G[\eta]f=m(D)f+R[\eta]f,
\eq
where $R[\eta]f$ has mean zero and  
\bq\label{est:RDN}
\| R[\eta]f\|_{C^{k-1, \a}(\T^d)}\le M(\| \eta\|_{C^{k, \a}(\T^d)})\| \eta\|_{C^{k, \a}(\T^d)}\| f\|_{C^{k, \a}(\T^d)}.
\eq
Moreover, if  $\inf_{x\in \T^d} (\eta_j(x)+b)\ge c^0_j>0$ in the finite depth case, then there exists $\tilde M: \Rr_+\times \Rr_+\to \Rr_+$ depending only on $(d, b, c^0_1, c^0_2,  k, \a)$ such that 
\bq\label{contraction:RDN}
\| R[\eta_1]f-R[\eta_2]f\|_{C^{k-1, \a}(\T^d)}\le \tilde M\left(\| \eta_1\|_{C^{k, \a}(\T^d)}, \| \eta_2\|_{C^{k, \a}(\T^d)}\right)\| \eta_1-\eta_2\|_{C^{k, \a}(\T^d)}\| f\|_{C^{k, \a}(\T^d)}.
\eq
\end{prop}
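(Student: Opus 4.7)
The plan is to flatten the fluid domain via a diffeomorphism and realize $G[\eta]$ as a perturbation of $m(D) = G[0]$. Pick a smooth cutoff $\chi$ with $\chi(0) = 1$, $\chi(-b) = \chi'(-b) = 0$ (finite depth), or $\chi$ compactly supported near $0$ (infinite depth), and define $\Phi: S \to \Omega_\eta$ by $\Phi(x, z) = (x, z + \chi(z)\eta(x))$, where $S = \T^d \times (-b, 0)$ or $S = \T^d \times (-\infty, 0)$. Under the assumption $\eta \in \rC^{k,\alpha}$ with $\eta + b \geq c^0 > 0$, this is a $C^{k,\alpha}$ diffeomorphism with derivatives controlled by $\|\eta\|_{C^{k,\alpha}}$. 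Setting $\tilde q = q \circ \Phi$ where $q$ is the harmonic extension of $f$ in $\Omega_\eta$, pullback transforms the harmonic equation into
\begin{equation*}
\Delta \tilde q + \mathcal{P}[\eta] \tilde q = 0 \quad \text{in } S, \qquad \tilde q|_{z=0} = f,
\end{equation*}
plus the appropriate bottom or decay condition. Here $\mathcal{P}[\eta]$ is a second-order operator whose coefficients are smooth rational functions of $\chi \eta, \chi'\eta, \chi \nabla_x \eta$ with denominator $(1+\chi'\eta)^2$ bounded below, and which vanishes identically at $\eta = 0$. Similarly, the pulled-back Dirichlet-to-Neumann operator reads $G[\eta] f = \partial_z \tilde q|_{z=0} + (\mathcal{Q}[\eta] \tilde q)|_{z=0}$ with $\mathcal{Q}[\eta]$ a first-order operator in $\nabla_x$ that vanishes at $\eta = 0$.

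Next, split $\tilde q = \tilde q_0 + \tilde q_1$ with $\tilde q_0$ the harmonic extension of $f$ in the flat reference domain $S$, so that $\partial_z \tilde q_0|_{z=0} = m(D) f$ by the Fourier-multiplier characterization. The correction $\tilde q_1$ then solves $\Delta \tilde q_1 = -\mathcal{P}[\eta](\tilde q_0 + \tilde q_1)$ in $S$ with zero trace at $z = 0$ and the corresponding bottom condition. Applying Schauder estimates (directly in the finite depth case, or on a finite strip containing $\mathrm{supp}\,\chi$ in the infinite depth case, outside of which $\mathcal{P}[\eta] = 0$ and the flat Poisson extension $\tilde q_0$ decays exponentially) yields
\begin{equation*}
\|\tilde q_1\|_{C^{k,\alpha}} \leq M(\|\eta\|_{C^{k,\alpha}}) \, \|\eta\|_{C^{k,\alpha}} \, \|f\|_{C^{k,\alpha}}.
\end{equation*}
Therefore $R[\eta] f := G[\eta] f - m(D) f = \partial_z \tilde q_1|_{z=0} + (\mathcal{Q}[\eta]\tilde q)|_{z=0}$ satisfies \eqref{est:RDN}, and is mean zero since both $G[\eta]f$ (by Proposition \ref{DN:positive}) and $m(D)f$ (since $m(0) = 0$) are.

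For the contraction estimate \eqref{contraction:RDN}, the coefficients of $\mathcal{P}[\eta]$ and $\mathcal{Q}[\eta]$ are Lipschitz in $\eta$ in the $C^{k,\alpha}$ topology (rational functions of $\eta$ and $\nabla\eta$ with denominators uniformly bounded below by the depth hypothesis). Writing the elliptic equation satisfied by $\tilde q_1^{(1)} - \tilde q_1^{(2)}$ and rerunning the Schauder step produces the factor $\|\eta_1 - \eta_2\|_{C^{k,\alpha}}$ on the right-hand side. The main technical hurdle lies in the infinite depth case: Schauder theory is not directly available on an unbounded half-space, but the compact $z$-support of $\chi$ localizes every $\eta$-dependent source term to a bounded subregion, allowing a reduction to bounded-domain Schauder estimates matched to the explicit decay of the flat Poisson extension below $\mathrm{supp}\,\chi$.
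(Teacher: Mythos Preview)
Your approach is essentially the same as the paper's: flatten the domain, write the pulled-back Laplace equation as a perturbation of the flat Laplacian, split the solution into the flat harmonic extension (which produces $m(D)f$) plus a correction, and control the correction via Schauder estimates. The paper uses the explicit flattenings $\rho(x,z)=\frac{z+b}{b}\eta(x)+z$ (finite depth) and $\rho(x,z)=\eta(x)+z$ (infinite depth), writes the transformed equation in divergence form with coefficient matrix $\mathcal{A}=I+\mathcal{A}_r$, and proceeds identically.

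One technical point deserves care: for a generic smooth cutoff $\chi$, your map $\Phi(x,z)=(x,\,z+\chi(z)\eta(x))$ need not be a diffeomorphism when $\eta$ is large, since $\partial_z\Phi_2=1+\chi'(z)\eta(x)$ can vanish; the proposition imposes no size restriction on $\eta$, only the depth condition $\eta+b\ge c^0$. The paper's linear choice gives $\partial_z\rho=(\eta+b)/b\ge c^0/b>0$ directly from that hypothesis, and in infinite depth the pure translation has Jacobian identically $1$. Your localization of the perturbation to a finite strip is a pleasant device for the infinite-depth Schauder step, but you must secure invertibility separately---for instance by letting the support width of $\chi$ scale with $\|\eta\|_{L^\infty}$ so that $\|\chi'\|_{L^\infty}\|\eta\|_{L^\infty}<1$, or by reverting to the paper's translation and applying interior Schauder estimates near $z=0$.
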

\begin{proof}
We first note that since $G[\eta]f$ has mean zero and the symbol $m$ vanishes at zero frequency, the remainder $R[\eta]f$ defined by \eqref{lin:DN} also has mean zero. To prove the estimate \eqref{est:RDN} we  flatten the domain $\Omega_\eta$ using the change of variables $(x, z)\to (x, y)=(x, \rho(x, z))\in \Omega_\eta$ where 
\bq
\rho(x, z)=
\begin{cases}
\frac{z+b}{b}\eta(x)+z,\quad (x, z)\in \T^d\times (-b, 0)\quad\text{for  finite depth},\\
\eta(x)+z,\quad (x, z)\in \T^d\times \Rr_-\quad\text{for infinite depth}.
\end{cases}
\eq
The Jacobian of this change of variables is $\p_z\rho$ which equals $1$ for infinite depth and $(\eta(x)+b)/b\ge c^0>0$ for finite depth thanks to the assumption on $\eta$. In either case, it is a Lipschitz diffeomorphism. To express the Dirichlet-to-Neumann operator $G[\eta]f$ we let $q$ be the solution of \eqref{elliptic:DN} or \eqref{elliptic:DN:i}. Then $v(x, z)=q(x, \rho(x, z))$ satisfies 
\bq\label{elliptic:v}
\di_{x, z} \mathcal{A}\na_{x, z}v=0
\eq
where
\bq
\mathcal{A}=\begin{bmatrix}
\p_z\rho I_d & -\na_x\rho\\
-(\na_x\rho)^T &\frac{1+|\na_x\rho|^2}{\p_z\rho}
\end{bmatrix},
\eq
$I_d$ denoting the identity $d\times d$ matrix. We split 
\[
\begin{aligned}
\mathcal{A}&=I+\mathcal{A}_r,\quad \mathcal{A}_r=\begin{bmatrix} 
(\p_z\rho -1) I_d & -\na_x\rho\\
-(\na_x\rho)^T &\frac{1+|\na_x\rho|^2}{\p_z\rho}-1
\end{bmatrix}
\end{aligned}
\]
so that each entry of $\mathcal{A}_r$ vanishes as $\eta=0$ and the estimate \bq\label{est:cAr}
\| \mathcal{A}_r\|_{C^{k-1, \a}(\overline\Omega_\eta)}\le M(\| \eta\|_{C^{k, \a}(\T^d)})\| \eta\|_{C^{k, \a}(\T^d)}
\eq
holds for some $M$ depending only on $(d, b, c^0, k, \a)$.  We then split the solution accordingly as $v=v_0+v_r$, where 
\bq
\Delta_{x, z} v_0=0\quad\text{in }\Omega_0,\quad v_0\vert_{z=0}=f,\quad \p_zv_0\vert_{z=-b}=0
\eq
and 
\bq
\Delta_{x, z} v_r=-\di \mathcal{A}_r\na_{x, z}v\quad\text{in }\Omega_0,\quad v_r\vert_{z=0}=0,\quad \p_zv_r\vert_{z=-b}=0
\eq
with obvious modifications for the infinite depth case. It is a classical calculation using Fourier transform in $x$ that $\p_zv_0\vert_{z=0}=m(D)f$. On the other hand,  for $k\ge 2$ Schauder's estimates for $v$ and $v_r$ yield 
\[
\begin{aligned}
\| v_r\|_{C^{k, \a}(\ol{\Omega_\eta})}&\le C\| \di(\mathcal{A}_r\na_{x, z}v)\|_{C^{k-2, \a}(\ol{\Omega_\eta})}\\
&\le M(\| \eta\|_{C^{k, \a}(\T^d)})\| \eta\|_{C^{k, \a}(\T^d)}\| \na_{x, z} v\|_{C^{k-1, \a}(\ol{\Omega_\eta})}\\
&\le M(\| \eta\|_{C^{k, \a}(\T^d)})\| \eta\|_{C^{k, \a}(\T^d)}\| f\|_{C^{k, \a}(\T^d)},
\end{aligned}
\]
where we have invoked \eqref{est:cAr}. For $k=1$, the $C^{1, \a}$ elliptic estimate (see Theorem 8.33 in \cite{GilTru}) implies
\[
\begin{aligned}
\| v_r\|_{C^{1, \a}(\ol{\Omega_\eta})}&\le C\| \mathcal{A}_r\na_{x, z}v\|_{C^{0, \a}(\ol{\Omega})}\\
&\le M(\| \eta\|_{C^{k, \a}(\T^d)})\| \eta\|_{C^{1, \a}(\T^d)}\| \na_{x, z} v\|_{C^{0, \a}(\ol{\Omega})}\\\
&\le M(\| \eta\|_{C^{k, \a}(\T^d)})\| \eta\|_{C^{1, \a}(\T^d)}\| f\|_{C^{1, \a}(\T^d)}.
\end{aligned}
\]
Thus for all $k\ge 1$ we obtain
\bq\label{est:vr}
\| \na_{x, z} v_r\vert_{z=0}\|_{C^{k-1, \a}(\T^d)}\le M(\| \eta\|_{C^{k, \a}(\T^d)})\| \eta\|_{C^{k, \a}(\T^d)}\| f\|_{C^{k, \a}(\T^d)}.
\eq
Next, using the chain rule and the definition of $\mathcal{A}_r$, we find
\bq
\begin{aligned}
G[\eta]f
&=\p_zv\vert_{z=0}+\left(\frac{1+|\na \rho|^2}{\p_z\rho}-1\right)\p_z v\vert_{z=0}-\na_x\rho\cdot \na_xv\vert_{z=0}\\
&=m(D)+\p_zv_r\vert_{z=0}+\mathcal{A}_re_{d+1}\cdot \na_{x, z}v,
\end{aligned}
\eq
where $e_{d+1}=(0, 1)$ with $0\in \Rr^d$.   Then appealing to \eqref{est:vr}, \eqref{est:cAr},  and Schauder estimates for $v$ we obtain \eqref{est:RDN}. The contraction estimate \eqref{contraction:RDN} can be obtained by an analogous argument upon taking the difference of the equations \eqref{elliptic:v} for the solutions corresponding to $f_1$ and $f_2$. 
\end{proof}
Since $G[\eta_1]-G[\eta_2]=R[\eta_1]-R[\eta_2]$, \eqref{contraction:RDN}  yields a contraction estimate for the Dirichlet-to-Neumann operator. 
\section{The capillary-gravity operator}\label{section:H}
We first prove the invertibility of the capillary-gravity operator $\sigma H+gI$ between H\"older spaces.
\begin{prop}\label{theo:iso:H}
Let $\sigma>0$, $g>0$, $k\ge 3$, and $\a\in (0, 1)$. Then  $\sigma H+g I: C^{k, \a}(\T^d)\to C^{k-2, \a}(\T^d)$ is bijective and its inverse is Lipschitz continuous on bounded subsets of $C^{k-2, \a}(\T^d)$, that is,
\bq\label{continverse:H}
\| (\sigma H+g I)^{-1}(h_1)-(\sigma H+g I)^{-1}(h_2)\|_{C^{k, \a}}\le M(\| h_1\|_{C^{k-2, \a}}, \| h_2\|_{C^{k-2, \a}})\| h_1-h_2\|_{C^{k-2, \a}}
\eq
for some $M: \Rr_+\times \Rr_+\to \Rr_+$ depending only on $(\sigma, g, d, k, \a)$.

Moreover, for all $h\in C^1(\T^d)$, the equation  $(\sigma H+gI)(f)=h$ has a unique solution  $f\in C^{2, \beta}(\T^d)$ for all  $\beta\in (0, 1)$. 
\end{prop}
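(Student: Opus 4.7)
The plan is to treat $\sigma H(\eta) + g\eta = h$ variationally, as it is the Euler--Lagrange equation of the strictly convex functional
\[
\mathcal{E}(\eta) = \sigma \int_{\T^d} \sqrt{1 + |\nabla \eta|^2}\,dx + \frac{g}{2}\int_{\T^d} \eta^2\,dx - \int_{\T^d} h\eta\,dx,
\]
and then to bootstrap the resulting weak solution up to $C^{k,\alpha}$ by standard quasilinear elliptic regularity, while extracting the Lipschitz estimate \eqref{continverse:H} from the strict monotonicity of $\sigma H + gI$.

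For part (i), where $g>0$, the functional $\mathcal{E}$ is strictly convex and coercive on $BV(\T^d)\cap L^2(\T^d)$: the area integrand controls the bounded-variation seminorm while the $g$-term controls $\|\eta\|_{L^2}$. The direct method therefore produces a unique minimizer, which is a weak solution of the equation. An $L^\infty$ bound $\|\eta\|_{L^\infty}\le \|h\|_{L^\infty}/g$ follows from a weak maximum-principle/Stampacchia argument: testing the Euler--Lagrange equation against $(\eta-\|h\|_{L^\infty}/g)_+$ and using monotonicity of the area flux together with the zero-order $g\eta$ term forces this truncation to vanish. Once $\eta\in L^\infty$, interior gradient estimates of Bombieri--De Giorgi--Miranda type for the minimal-surface equation (see Ch.~15 of \cite{GilTru}) yield $\nabla \eta \in L^\infty$ and then $\eta\in C^{1,\alpha}$. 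Writing the equation as $a_{ij}(\nabla\eta)\partial_{ij}\eta + g\eta = h$ with coefficients $a_{ij}(\nabla\eta) \in C^{0,\alpha}$, linear Schauder bootstrap then pushes $\eta$ to $C^{k,\alpha}$ with a bound controlled by $\|h\|_{C^{k-2,\alpha}}$.

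The Lipschitz estimate \eqref{continverse:H} is then obtained by subtracting the equations $\sigma H(\eta_i) + g\eta_i = h_i$: the difference $w = \eta_1 - \eta_2$ satisfies the linear elliptic equation
\[
-\di\Bigl(\int_0^1 a'(\nabla(s\eta_1 + (1-s)\eta_2))\,ds\,\nabla w\Bigr) + gw = h_1 - h_2,
\]
where $a(p) = \sigma p/\sqrt{1+|p|^2}$ and the matrix-valued coefficient lies in $C^{k-2,\alpha}$ with norm controlled by $\|\eta_i\|_{C^{k,\alpha}}$, hence by $\|h_i\|_{C^{k-2,\alpha}}$. Monotonicity of $\sigma H$ supplies the base estimate $g\|w\|_{L^2}\le \|h_1-h_2\|_{L^2}$, and Schauder estimates for this linear problem then upgrade this to the full $C^{k,\alpha}$ Lipschitz bound.

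For part (ii), the gravity term is dropped but the domain is restricted to the mean-zero class $\rC^{k,\alpha}$, which is preserved by $H$ since $\int_{\T^d}H(\eta)=0$. The natural plan is to pass to the limit $g\to 0^+$ in part (i): integrating $\sigma H(\eta_g) + g\eta_g = h$ with $h\in \rC^{k-2,\alpha}$ yields $g\int \eta_g = 0$, so the part~(i) solution $\eta_g$ automatically lies in $\rC^{k,\alpha}$. The main obstacle is a \emph{uniform} $C^{k,\alpha}$ bound on $\eta_g$ as $g\to 0$: the area functional grows only linearly in $|\nabla \eta|$, so the $H^1$-coercivity exploited in part (i) degenerates in the limit, and uniform bounds must instead come from a more delicate a priori analysis combining Poincar\'e--Wirtinger on mean-zero functions with nonlinear elliptic regularity. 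Once such a uniform bound is secured, compactness and continuity of $H$ yield a $\rC^{k,\alpha}$ solution, and the Lipschitz estimate \eqref{continverse:H} is inherited by stability from part~(i).
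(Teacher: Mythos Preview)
Your part (i) is sound but takes a longer route than the paper. You build existence variationally in $BV\cap L^2$ and then bootstrap via a Stampacchia $L^\infty$ bound, interior gradient estimates, and Schauder; the paper simply cites a quasilinear existence result (Theorem~11.5 in \cite{GilTru}) and moves on. Your Lipschitz argument for \eqref{continverse:H}---subtract the two equations, use monotonicity of the flux for an $L^2$ base bound on $w=f_1-f_2$, then apply linear Schauder to the difference equation---is essentially the paper's argument as well.

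The genuine gap is in part (ii). You propose to reach $g=0$ as a limit of the part-(i) solutions $\eta_g$, and you correctly flag the uniform-in-$g$ bound on $\|\eta_g\|_{C^{k,\alpha}}$ as ``the main obstacle,'' but you do not supply it. Your $L^\infty$ bound $\|\eta_g\|_{L^\infty}\le\|h\|_{L^\infty}/g$ blows up, and the energy identity only controls $\int|\nabla\eta_g|^2/\sqrt{1+|\nabla\eta_g|^2}$, which is not $H^1$-coercive without an a~priori gradient bound; so the limit is genuinely incomplete. (The same degeneration afflicts your claim that the Lipschitz estimate is ``inherited by stability'': the base bound $g\|w\|_{L^2}\le\|h_1-h_2\|_{L^2}$ from part (i) also blows up as $g\to0$.) The paper sidesteps all of this by treating (ii) \emph{directly}, in parallel with (i), not by a limit. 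Existence for $\sigma H(f)=h$ on $\rC^{k,\alpha}$ again comes from quasilinear theory, and for the Lipschitz estimate one tests the difference equation against $f=f_1-f_2$ exactly as before: monotonicity gives
\[
\frac{\sigma}{(1+\max_j\|\nabla f_j\|_{L^\infty}^2)^{3/2}}\,\|\nabla f\|_{L^2}^2 \le \int_{\T^d}(h_1-h_2)f\,dx \le \|h_1-h_2\|_{L^2}\|f\|_{L^2},
\]
and since $f$ has mean zero, Poincar\'e--Wirtinger converts this directly into $\|f\|_{H^1}\le C(\|\nabla f_1\|_{L^\infty},\|\nabla f_2\|_{L^\infty})\,\|h_1-h_2\|_{L^2}$. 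This $H^1$ bound replaces the $L^2$ bound you used when $g>0$, and the subsequent interpolation-plus-Schauder step is unchanged. No limiting procedure is needed.
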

\begin{proof}
Clearly  $\sigma H+g I: C^{k, \a}(\T^d)\to C^{k-2, \a}(\T^d)$. To prove the invertibility we fix any $h\in C^{k-2, \a}(\T^d)$ and prove that the equation 
\bq\label{cgeq}
\sigma H(f)+g f=h
\eq
has a unique solution  $f\in C^{k, \a}(\T^d)$.  We first show the uniqueness by supposing 
\[
(\sigma H+gI)f_j=h_j,
\]
 where $h_j\in C^{k-2, \a}$ and $f_j\in  C^{k, \a}(\T^d)$ for $j=1, 2$. Then setting $f=f_1-f_2$ and $H_0(z)=z(1+|z|^2)^{-\mez}$ for $z\in \Rr^d$,   we have 
\[
-\sigma \di[(H_0(f_1))-H_0(f_2)]+gf=h_1-h_2.
\]
 Multiplying the preceding equation by $f$ and integrating by parts, we find
\begin{align*}
\sigma\int_{\T^d} \na f\cdot [(H_0(\na f_1)-H_0(\na f_2)]dx+g\int_{\T^d} f^2dx=\int_{\T^d}(h_1-h_2)fdx.
\end{align*}
For any fixed $x$ we define the scalar function $\ell_x: \Rr^d\to \Rr$ by $\ell_x(p)=\na f(x)\cdot H_0(p)$. The mean value theorem implies
\[
 \na f(x)\cdot [(H_0(\na f_1(x))-H_0(\na f_2(x))]=\ell_x(\na f_1(x))-\ell_x(\na f_2(x))=\na f(x)\cdot \na \ell_x(p)
\]
for some $p=p_x$ on the line segment joining $\na f_1(x)$ and $\na f_2(x)$. Direct calculation gives
\[
\p_j \ell_x(p)=\frac{\p_jf(x)(1+|p|^2)-(p\cdot\na f(x))p_j}{(1+|p|^2)^\tdm},
\]
whence
\[
\na f(x)\cdot \na \ell_x(p)=\frac{|\na f(x)|^2(1+|p|^2)-(p\cdot\na f(x))^2}{(1+|p|^2)^\tdm}\ge \frac{|\na f(x)|^2}{(1+|p|^2)^\tdm}\ge 0.
\]
It follows that 
\bq\label{energy:H}
\frac{\sigma}{(1+(\max\{\| \na f_1\|_{L^\infty, \| \na f_2\|_{L^\infty}}\})^2)^\tdm}\int_{\T^d} |\na f(x)|^2dx+g\int_{\T^d} f^2dx\le \int_{\T^d}(h_1-h_2)fdx.
\eq
Therefore, we obtain the $H^1$ contraction estimate 
 \bq\label{H1contra:est}
\frac{\sigma}{M_0} \| \na (f_1-f_2)\|^2_{L^2}+g\| f_1-f_2\|^2_{L^2}\le g^{-1}\| h_1-h_2\|_{L^2}^2,
 \eq
  where $M_0=M_0(\| \na f_1\|_{L^\infty}, \| \na f_2\|_{L^\infty})$. This yields the desired uniqueness. 

Next, we prove the existence of $f\in C^{k, \a}(\T^d)$ solving \eqref{cgeq} for $h\in C^{k-2, \a}(\T^d)$, $k\ge 3$. We will only prove this for $k=3$ as the case $k\ge 4$ then follows  from  a standard  bootstrap.  We recall that  $-H$ is a  quasilinear elliptic operator:
\[
-H(u)=\frac{1}{(1+|\na u|^2)^\mez}\p_j^2u-\frac{\p_ju\na u}{(1+|\na u|^2)^\tdm}\cdot \na \p_ju\equiv a^{ij}(\na u)\p_{ij}u,
\]
where $a^{ij}\in C^\infty(\Rr^d)$ is given by
\[
a^{ij}(p)=\frac{\delta_{ij}}{(1+|p|^2)^\mez}-\frac{p_ip_j}{(1+|p|^2)^\tdm}
\]
and satisfies 
\bq
(1+|p|^2)^{-\tdm}|\xi|^2\le a^{ij}(p)\xi_i\xi_j\le (1+|p|^2)^{-\mez}|\xi|^2.
\eq
Hence, \eqref{cgeq} is equivalent to 
\bq\label{cgeq:1} 
\sigma a^{ij}(\na f(x)) \p^2_{ij} f(x)-g f(x)+h(x)=0.
\eq
By virtue of the Leray-Schauder  fixed point theorem (see Theorems 11.3 and 11.4 in \cite{GilTru}), it suffices to establish an a priori estimate for $\| f\|_{C^{2, \a}(\T^d)}$, assuming that $f\in C^{3, \a}(\T)$ is a solution.  It is readily seen that \eqref{cgeq} (or \eqref{cgeq:1}) obeys the maximum principle 
\bq\label{Cest}
\| f\|_{C(\T)}\le g^{-1}\| h\|_{C(\T)}. 
\eq  
To derive a bound for $\| \na f\|_{C(\T^d)}$ we first  rewrite \eqref{cgeq:1}  as
  \bq\label{cap-gravity rewritten}
       \sigma  \tilde{a}^{ij}(\nabla f) \partial_{ij} f = (gf - h) \sqrt{1+|\nabla f|^2},
    \eq
    where 
    \bq
        \tilde{a}^{ij}(\nabla f) = \delta^{ij} - \frac{\partial_i f \partial_j f}{1+ |\nabla f|^2}.
    \eq
We set $F(x) = \mez |\nabla f|^2$. Since $f\in C^3(\T^d)$, we have 
 \bq
    \partial_i F = \partial_k f \partial_{ki} f, \qquad \partial_{ij} F = \partial_{ki} f \partial_{kj}f + \partial_k f \partial_{kij} f,
 \eq
and hence
 \bq\label{aij F}
 \tilde{a}^{ij}(\nabla f) \partial_{ij}F  = |D^2 f|^2 - (1+|\nabla f|^2)^{-1}\p_if\partial_{ki} f \p_jf\partial_{kj} f +\partial_k f \tilde{a}^{ij}(\nabla f)\partial_k \p_{ij}f,
 \eq
 where we have used Einstein's summation convention.  To get rid of the third derivatives, we differentiate \eqref{cap-gravity rewritten} with respect to $x_k$:
\begin{multline*}
    \tilde{a}^{ij}(\nabla f)\partial_k \p_{ij}f  =   (1+|\nabla f|^2)^{-1}\bigl[ \partial_{j} f\partial_{ki} f  + \partial_i f\partial_{kj} f  \bigr] \partial_{ij} f 
    - 2 (1+|\nabla f|^2)^{-2} (\partial_\ell f \partial_{k\ell } f) (\partial_i f \partial_{j} f \partial_{ij}f)  \\
    + \sigma^{-1}(g\p_k f - \p_k  h )(1+|\nabla f|^2)^\mez + \sigma^{-1}(gf- h)(1+|\nabla f|^2)^{-\mez} \p_{k\ell}f \p_\ell f.
\end{multline*}
Plugging the above into \eqref{aij F}  gives
\begin{multline*}
    \tilde{a}^{ij}(\nabla f) \partial_{ij}F  = |D^2 f|^2 - (1+|\nabla f|^2)^{-1}\p_if\partial_{ki} f \p_jf\partial_{kj} f \\
    + (1+|\nabla f|^2)^{-1}\bigl[ \partial_{j} f(\p_kf\partial_{ki} f) + \partial_i f(\p_k f \partial_{kj} f)  \bigr] \partial_{ij} f 
    - 2 (1+|\nabla f|^2)^{-2} (\partial_\ell f \p_k f\partial_{k\ell } f) (\partial_i f \partial_{j} f \partial_{ij}f)  \\
    + \sigma^{-1}(g\p_k f\p_k f - \p_k  h \p_k f )(1+|\nabla f|^2)^\mez +  \sigma^{-1}(gf- h)(1+|\nabla f|^2)^{-\mez} (\p_k f\p_{k\ell}f) \p_\ell f.
\end{multline*}
There exists  $x_0\in \T^d$  such that $\max_{\T^d} F = F(x_0)$. Then, all the terms of the form $\sum_{\mu =1}^d\p_\mu  f \p_{\mu \nu} f$ in the above vanish at $x_0$, resulting in
$$\tilde{a}^{ij}(\nabla f) \partial_{ij}F(x_0) = |D^2f(x_0)|^2 +\sigma^{-1} (g|\nabla f(x_0)|^2 - \nabla f(x_0) \cdot \nabla h(x_0)) (1+|\nabla f(x_0)|^2)^\mez. $$
Ellipticity of $\tilde{a}^{ij}$ and the fact that $F$ attains its maximum at $x_0$ implies that $\tilde{a}^{ij}(\nabla f) \partial_{ij}F(x_0) \leq 0$, whence 
$$g|\nabla f(x_0)|^2 - \nabla f(x_0) \cdot \nabla h(x_0) \leq 0.$$
The  Cauchy-Schwarz inequality then implies 
\bq\label{C1est}
\| \na f\|_{C(\T^d)}\le g^{-1}\| \na h\|_{C(\T^d)}.
\eq
Combing \eqref{Cest} and \eqref{C1est} yields the $C^1$ estimate 
\bq\label{C1est:2}
\| f\|_{C^1(\T^d)}\le g^{-1} \| h\|_{C^1(\T^d)}.
\eq
By Theorem 13.1 in \cite{GilTru},  the divergence form equation \eqref{cgeq} obeys the $C^{1, \gamma}$ estimate 
\bq\label{C1aest}
\|  f\|_{C^{1, \gamma}(\T^d)}\le M_1(\| f\|_{C^1(\T^d)})\le M_1(g^{-1}\| h\|_{C^1(\T^d)})\quad\text{for~some~}\gamma=\gamma(\|h\|_{C^1})\in (0, 1).
\eq
This  yields the $C^{0, \gamma}(\T^d)$ estimate for the coefficients of the elliptic equation \eqref{cgeq:1}. Consequently, Schauder's estimates (see e.g. Theorem 6.2 in \cite{GilTru}) imply 
\bq\label{C2est}
\| f\|_{C^{2, \gamma}(\T^d)}\le M_2(\| h\|_{C^1(\T^d)}).
\eq 
By using \eqref{C2est} and reapplying Schauder's estimates, we obtain
\bq\label{C2est:1}
\| f\|_{C^{2, \beta}(\T^d)}\le M_2(\| h\|_{C^1(\T^d)})\quad\forall \beta \in (0, 1). 
\eq 
We have proven that the map $\sigma H+g I: C^{k, \a}(\T^d)\to C^{k-2, \a}(\T^d)$ is  bijective, $k\ge 3$.  To prove that the inverse map is continuous from $C^{k-2, \a}(\T^d)$ to $C^{k, \a}(\T^d)$, we note that  $f:=f_1-f_2$ obeys the equation 
\bq
Qf=a^{ij}(\na f_1)\p^2_{ij}f-\sigma^{-1}gf=-[a^{ij}(\na f_1)-a^{ij}(\na f_2)]\p^2_{ij}f_2-\sigma^{-1}(h_1-h_2):=k.
\eq
Since $f_j\in C^{k, \a}(\T^d)$, we have $a^{ij}(\na f_1)\in C^{k-1, \a}$ and  $k\in C^{k-2, \a}$ with
\bq\label{est:RHS:Qf}
\| k\|_{C^{k-2,\a}}\le M(\| f_1\|_{C^{k, \a}}, \|f_2\|_{C^{k, \a}})\left(\| f\|_{C^{k-1, \a}}+\| h_1-h_2\|_{C^{k-2, \a}}\right).
\eq
Invoking  Schauder's estimates again, we deduce that
\bq\label{schauder:f}
\| f\|_{C^{k, \a}}\le M\left(\| f\|_{C(\T^d)}+\| k\|_{C^{k-2, \a}}\right)\le M\left(\| f\|_{C^{k-1, \a}}+\| h_1-h_2\|_{C^{k-2, \a}}\right),
\eq 
 where $M=M(\| f_1\|_{C^{k, \a}}, \|f_2\|_{C^{k, \a}})$. By interpolating $\| f\|_{C^{k-1, \a}}$ between $\| f\|_{C^{k, \a}}$ and  the $H^1$ bound \eqref{H1contra:est}, we  deduce from \eqref{schauder:f} that
 \bq\label{contraest:proof:100}
 \| f_1-f_2\|_{C^{k, \a}}\le M(\| f_1\|_{C^{k, \a}}, \|f_2\|_{C^{k, \a}})\| h_1-h_2\|_{C^{k-2, \a}}.
\eq
Since $\| f_j\|_{C^{k, \a}}\le M(\| h_j\|_{C^{k-2, \a}})$ for $k\ge 3$,  the preceding estimate yields the Lipschitz continuity \eqref{continverse:H} of $(\sigma H+gI)^{-1}$. We note that \eqref{contraest:proof:100} holds for all $k\ge 2$.

Next, we let $h\in C^1(\T)$ and prove that \eqref{cgeq} has a unique solution $f\in C^{1, \beta}$ for all $\beta \in (0, 1)$. We first approximate $h$ by $h_n\in C^{1, \mez}(\T^d)$. The case $k=3$ above provides a unique function   $f_n\in C^{3, \mez}(\T^d)$  satisfying $(\sigma H+ gI)(f_n)=h_n$. Moreover, \eqref{C2est:1} implies the uniform bound
\[
\| f_n\|_{C^{2, \beta}(\T^d)}\le M_2(\| h_n\|_{C^1(\T^d)})\le M_2(2\| h\|_{C^1(\T^d)})\quad\forall \beta \in (0, 1).
\]
Then, applying the contraction estimate \eqref{contraest:proof:100} with $k=2$, we deduce that $\{f_n\}$ is a Cauchy sequence in $C^{2,  \beta}(\T^d)$ for all $\beta \in (0, 1)$. The limit $f\in \cap_{\beta \in (0, 1)} C^{2,  \beta}(\T^d)$ of $\{f_n\}$ is the unique solution of \eqref{cgeq}. 
\end{proof}
The next proposition provides the linearization of the mean curvature operator about zero. 
\begin{prop}\label{prop:linH}
Let $k\ge 2$ and $\a\in (0, 1)$. For any $\eta \in C^{k, \a}(\T^d)$ we have
\bq\label{lin:H}
H(\eta)=-\Delta \eta +R_H(\eta),
\eq
where $R_H(\eta)$ has mean zero and for some $M: \Rr_+\to \Rr_+$ depending only one $(d, k, \a)$, 
\bq\label{est:RH}
\| R_H(\eta)\|_{C^{k-2, \a}(\T^d)}\le M(\| \eta\|_{C^{k, \a}(\T^d)})\| \eta\|^2_{C^{k, \a}(\T^d)}.
\eq
Moreover, for any $\eta_1,~\eta_2\in  C^{k, \a}(\T^d)$ we have
\bq\label{contraction:RH}
\begin{aligned}
&\| R_H(\eta_1)-R_H(\eta_2)\|_{C^{k-2, \a}(\T^d)}\\
&\quad\le \tilde M\left(\| \eta_1\|_{C^{k, \a}(\T^d)}, \| \eta_2\|_{C^{k, \a}(\T^d)}\right)\left(\| \eta_1\|_{C^{k, \a}(\T^d)}+\| \eta_2\|_{C^{k, \a}(\T^d)}\right)\| \eta_1-\eta_2\|_{C^{k, \a}(\T^d)},
\end{aligned}
\eq
where $\tilde M: \Rr_+\times \Rr_+\to \Rr_+$ depends only on $(d, k, \a)$.
\end{prop}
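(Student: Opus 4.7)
The plan is to write $R_H(\eta) := H(\eta)+\Delta\eta$ as the divergence of a smooth nonlinear function of $\na\eta$ that vanishes to third order at the origin, and then reduce the two Hölder estimates to standard composition and product estimates in $C^{k-1,\alpha}(\T^d)$. A direct computation gives
\[
R_H(\eta) = -\di\left[\na\eta\left(\frac{1}{\sqrt{1+|\na\eta|^2}}-1\right)\right] = \di\bigl[A(\na\eta)\bigr],
\]
where
\[
A(p) = \frac{p\,|p|^2}{\sqrt{1+|p|^2}\,\bigl(1+\sqrt{1+|p|^2}\bigr)} = p\,|p|^2\,b(|p|^2),\qquad b(z):=\frac{1}{\sqrt{1+z}\,(1+\sqrt{1+z})},
\]
so $A\in C^\infty(\Rr^d;\Rr^d)$ with $A(p)=O(|p|^3)$ and $DA(p)=O(|p|^2)$ near the origin. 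That $R_H(\eta)$ has mean zero then follows at once from the fact that it is the divergence of a smooth periodic vector field on $\T^d$.

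For the boundedness estimate \eqref{est:RH}, since taking a divergence is bounded from $C^{k-1,\alpha}$ into $C^{k-2,\alpha}$, it suffices to control $\|A(\na\eta)\|_{C^{k-1,\alpha}}$. Because $k\ge 2$, $C^{k-1,\alpha}(\T^d)$ is a Banach algebra, so the factorization of $A$ above reduces matters to three explicit factors of $\na\eta$ multiplied by the scalar $b(|\na\eta|^2)$, whose Hölder norm is controlled by $M(\|\na\eta\|_{C^{k-1,\alpha}})$ via the standard Faà di Bruno composition estimate. Collecting the factors yields $\|R_H(\eta)\|_{C^{k-2,\alpha}}\le M(\|\eta\|_{C^{k,\alpha}})\,\|\eta\|^3_{C^{k,\alpha}}$, which is strictly stronger than the claimed quadratic bound.

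For the contraction estimate \eqref{contraction:RH}, I would apply the mean value identity
\[
A(\na\eta_1)-A(\na\eta_2) = \left(\int_0^1 DA\bigl(\na(t\eta_1+(1-t)\eta_2)\bigr)\,dt\right)(\na\eta_1-\na\eta_2)
\]
and again reduce to estimating $\|A(\na\eta_1)-A(\na\eta_2)\|_{C^{k-1,\alpha}}$. Since $DA$ vanishes to second order at the origin, the same algebra and composition estimates applied termwise show that the $C^{k-1,\alpha}$-norm of the integrand is bounded, uniformly in $t\in[0,1]$, by $\tilde M(\|\eta_1\|_{C^{k,\alpha}},\|\eta_2\|_{C^{k,\alpha}})(\|\eta_1\|_{C^{k,\alpha}}+\|\eta_2\|_{C^{k,\alpha}})^2$. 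Multiplying by $\|\na\eta_1-\na\eta_2\|_{C^{k-1,\alpha}}$ then produces an estimate even stronger than \eqref{contraction:RH}, with a quadratic (rather than linear) dependence on $\|\eta_1\|_{C^{k,\alpha}}+\|\eta_2\|_{C^{k,\alpha}}$. The only mildly technical ingredient throughout is the Faà di Bruno composition bound for a smooth function applied to a $C^{k-1,\alpha}$ argument, which is classical and presents no conceptual obstacle.
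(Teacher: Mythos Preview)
Your proof is correct and follows essentially the same route as the paper: both write $R_H(\eta)=-\di\bigl(\na\eta\,H_1(\na\eta)\bigr)$ with $H_1(p)=(1+|p|^2)^{-1/2}-1$ vanishing at the origin, and then invoke standard product and composition estimates in $C^{k-1,\alpha}$. Your version is slightly more explicit---you factor $H_1(p)=-|p|^2 b(|p|^2)$ to extract a cubic (rather than merely quadratic) bound---but the underlying argument is the same.
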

\begin{proof}
This follows from the formula
\[
H(\eta)=-\Delta \eta-\di(\na \eta H_1(\na\eta)),
\]
where $H_1(p)=(1+|p|^2)^{-\mez}-1$ satisfies $H_1(0)=0$. 
\end{proof}
\section{Proof of Theorem \ref{theo:main}}\label{section:proof}
\subsection{Small traveling waves}
In this section we construct small traveling wave solutions  when the external pressure  is suitably small, proving in particular part I of Theorem \ref{theo:main}. For this purpose it is convenient to replace $\ka\varphi$ in \eqref{externalpressure} by a single function $\psi$, so that \eqref{eq:tw} becomes
\bq\label{eq:tw:0}
-\gamma \p_1\eta=-G[\eta](\sigma H(\eta)+g\eta+\psi).
\eq
 We will construct small solutions in $\rC^{3, \a}(\T^d)$ of \eqref{eq:tw:0} by the contraction mapping method. To proceed we first need to rewrite the traveling wave equation \eqref{eq:tw} in an appropriate fixed point formulation. Using the linearization of the Dirichlet-to-Neumann operator and the mean curvature operator   in Propositions \ref{prop:linDN} and \ref{prop:linH}, we obtain
\bq
G[\eta](\sigma H(\eta)+g\eta)=m(D)(-\sigma \Delta +g)\eta+ \sigma m(D)R_H(\eta)+R[\eta](\sigma H(\eta)+g\eta).
\eq
Then the traveling wave equation \eqref{eq:tw:0} can be rewritten as
\bq\label{eq:eta:1}
\left[-\gamma\p_1+m(D)(-\sigma \Delta +g)\right]\eta= -\sigma m(D)R_H(\eta)-R[\eta](\sigma H(\eta)+g\eta) -G[\eta]\psi.
\eq
By virtue of Propositions \ref{theo:iso:G} and \ref{prop:linDN}, the right-hand side of \eqref{eq:eta:1} has mean zero. Moreover, the symbol of the Fourier multiplier  $-\gamma\p_1+m(D)(-\sigma \Delta +g)$ vanishes only at $0$, hence we can invert it to obtain the  equivalent formulation
\bq
\eta=K^{\sigma, g}_\psi(\eta),\quad\int_{\T^d}\eta=0,
\eq
where
\bq
 K^{\sigma, g}_\psi(\eta)=[-\gamma\p_1+m(D)(-\sigma \Delta +g)]^{-1}\left\{-\sigma m(D)R_H(\eta)-R[\eta](\sigma H(\eta)+g\eta)- G[\eta]\psi\right\}.
\eq
The existence and uniqueness of small traveling waves is proven in the next proposition. 
\begin{prop}\label{prop:smallTW}
Consider $\sigma>0$ and $g>0$. Let  $\gamma \in \Rr$, $\a\in (0, 1)$, and $\psi\in \rC^{1, \a}(\T^d)$. There exist positive constants  $\delta_1$ and $C_1$ depending only on $(\sigma, g, \gamma, d, b, \a)$   such that if $\delta<\delta_1$ and $\|\psi\|_{\rC^{1, a}(\T^d)}<\delta/C_1$ then the following holds.

(i)  $K^{\sigma, g}_\psi$ has a unique fixed point  in the ball of radius $\delta$ centered at the origin in $\rC^{3, \a}(\T^d)$. 

(ii) The unique fixed point of $K^{\sigma, g}_\psi$  is a Lipschitz continuous function of $\psi$.
\end{prop}
\begin{proof}
We write 
\bq\label{Kpsi:L}
K^{\sigma, g}_\psi=m_1(D)L- m_1(D)G[\eta]\psi,
\eq
where
\bq\label{m12}
\begin{aligned}
&m_1(\xi)=[-\gamma i\xi_1+m(\xi)(\sigma |\xi|^2+g)]^{-1},\\
&L(\eta)=-\sigma m(D)R_H(\eta)-R[\eta](\sigma H(\eta)+g\eta).
\end{aligned}
\eq
Since $m(\xi)\asymp |\xi|$ as $|\xi|\asymp \infty$, the symbol $m_1$ satisfies \eqref{multiplier} with $\tau=-3$ due to the presence of $\sigma>0$. Then  Propositions \ref{prop:multiplier} and \ref{prop:ZH} imply that $m_1(D): \rC^{k-3, \a}\to \rC^{k, \a}$ for $k\ge 3$. We also note that $m$ satisfies \eqref{multiplier} with $\tau=1$. Let $B_\delta$ denote the ball in $\rC^{3, \a}(\T^d)$ with center $0$ and radius $\delta$. 

We first take $\eta\in B_\delta$ with $\delta<b/2$ so that $\dist(\eta, -b)\ge b/2>0$.  Then using \eqref{est:RDN} and \eqref{est:RH}  we deduce that $L(\eta)$ belongs to  $\rC^{0, \a}$ and is at least quadratic in $\eta$, that is,
\bq
\| L(\eta)\|_{\rC^{0, \a}}\le C\| \eta\|^2_{\rC^{3, \a}}.
\eq
 It follows that 
\bq\label{bound:K}
\| K^{\sigma, g}_\psi(\eta)\|_{\rC^{3, \a}}\le C(\| \eta\|^2_{\rC^{3, \a}}+\| \psi\|_{\rC^{1, \a}}),
 \eq
 where the constant $C$ depends only on $(\sigma, g, \gamma, d, b,  \a)$.  Thus, if $\delta<\delta_1=\min\{1, (2C)^{-1}\}$ then for  $\| \psi\|_{\rC^{1, \a}}<  \delta/(2C)$ we have $K^{\sigma, g}_\varphi: B_\delta\to B_\delta$. 
 
 Next, using the boundedness \eqref{est:RDN} and the contraction estimates \eqref{contraction:RDN} and \eqref{contraction:RH}, we find
 \bq
 \begin{aligned}
 \| L(\eta_1)-L(\eta_2)\|_{\rC^{0, \a}}&
 \le C \| R_H(\eta_1)-R_H(\eta_2)\|_{\rC^{1, \a}}\\
 &\qquad+\| R[\eta_1](\sigma H(\eta_1)+g\eta_1)-R[\eta_2](\sigma H(\eta_1)+g\eta_1)\|_{\rC^{0, \a}}\\
 &\qquad+\| R[\eta_2]\left\{\sigma [H(\eta_1)-H(\eta_2)]+g(\eta_1-g\eta_2)\right\}\|_{\rC^{0, \a}}\\
 &\le  C(\| \eta_1\|_{\rC^{3, \a}}+\| \eta_2\|_{\rC^{3, \a}})\| \eta_1-\eta_2\|_{\rC^{3, \a}}
 \end{aligned}
 \eq
 and 
 \bq
 \| G[\eta_1]\psi- G[\eta_2]\psi\}\|_{\rC^{0, \a}} \le C\| \eta_1-\eta_2\|_{\rC^{1, \a}}\| \psi\|_{\rC^{1, \a}}.
 \eq
 It follows that 
 \bq\label{Lip:K}
 \begin{aligned}
 \| K^{\sigma, g}_\psi(\eta_1)- K^{\sigma, g}_\psi(\eta_2)\|_{\rC^{3, \a}}&= \| m_1(D)[L(\eta_1)- L(\eta_2)]\|_{\rC^{3, \a}}+\| m_1(D)\{ G[\eta_1]\psi- G[\eta_2]\psi\}\|_{\rC^{3, \a}} \\
 &\le C\| L(\eta_1)-L(\eta_2)\|_{\rC^{0, \a}}+C\| G[\eta_1]\psi- G[\eta_2]\psi\}\|_{\rC^{0, \a}}\\
 &\le C\big(\| \eta_1\|_{\rC^{3, \a}}+\| \eta_2\|_{\rC^{3, \a}}+\| \psi\|_{\rC^{1, \a}}\big)\| \eta_1-\eta_2\|_{\rC^{3, \a}}.
 \end{aligned}
 \eq
By shrinking $\delta$ if necessary we obtain that $K^{\sigma, g}_\psi: B_\delta\to B_\delta$ is a contraction. The Banach contraction mapping theorem then implies that $K^{\sigma, g}_\psi$ has a unique fixed point $\eta$ in $B_\delta$ provided $\| \psi\|_{\rC^{1, \a}}<\delta/(2C)$. Finally, the Lipschitz continuous dependence of $\eta$ on $\psi$ follows from \eqref{Lip:K} and the  estimate 
 \begin{align*}
  \| K^{\sigma, g}_{\psi_1}(\eta)- K^{\sigma, g}_{\psi_2}(\eta)\|_{\rC^{3, \a}}&=\| m_1(D)G[\eta](\psi_1-\psi_2)\|_{\rC^{3, \a}}\\
  &\le M(\| \eta\|_{\rC^{3, \a}})\| \psi_1-\psi_2\|_{\rC^{1, \a}}.
 \end{align*}
\end{proof}
Part I  of Theorem \ref{theo:main} follows from Proposition \ref{prop:smallTW} by setting $\psi=\ka \varphi$. When $\sigma =0$, by straightforward modifications of the proof of Proposition \ref{prop:smallTW},  we obtain a local curve of small traveling gravity waves in $\rC^{k, \a}(\T^d)$ for $k\ge 1$. We record this in the following proposition, which is a sharper version  of \cite[Theorem 6.3]{NguyenTice} stated for $\rH^s(\T^d)$ solutions with $s>1+\frac{d}{2}$.
\begin{prop}\label{prop:smallTW:g}
Consider $\sigma=0$ and $g>0$. Let  $\gamma \in \Rr$, $\a\in (0, 1)$, and $\psi\in \rC^{3, \a}(\T^d)$. There exist positive constants $\delta_0$ and $C_0$ depending only on $(g, \gamma, d, b,  \a)$   such that if $\delta<\delta_0$ and $\|\psi\|_{\rC^{3, a}(\T^d)}<\delta/C_0$ then the following holds.

(i)  $K^{0, g}_\psi$ has a unique fixed point in the ball of radius $\delta$ centered at the origin in $\rC^{3, \a}(\T^d)$. 

(ii) The unique fixed point of $K^{0, g}_\psi$  is a Lipschitz continuous function of $\psi$.

\end{prop}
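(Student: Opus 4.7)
The plan is to adapt the contraction argument from Proposition~\ref{prop:smallTW} with the single key observation that, once $\sigma = 0$, the inverting Fourier multiplier $m_1$ loses two orders of smoothing. Concretely, setting $\sigma = 0$ in \eqref{Kpsi:L}--\eqref{m12} collapses the capillary remainder $R_H$ and gives
\begin{equation*}
K^{0,g}_\psi(\eta) = m_1(D)L(\eta) - m_1(D) G[\eta]\psi, \quad L(\eta) = -g\,R[\eta]\eta,
\end{equation*}
with $m_1(\xi) = [-\gamma i\xi_1 + g m(\xi)]^{-1}$. Since $m(\xi) \asymp |\xi|$ at infinity, $m_1$ now satisfies \eqref{multiplier} with $\tau = -1$ rather than $\tau = -3$, so the appendix gives $m_1(D): \rC^{k-1,\a}(\T^d) \to \rC^{k,\a}(\T^d)$. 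The loss of two orders of smoothing forces the stronger hypothesis $\psi \in \rC^{3,\a}(\T^d)$, which is precisely what is needed to ensure $G[\eta]\psi \in \rC^{2,\a}$ and thus $m_1(D)G[\eta]\psi \in \rC^{3,\a}$.

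Next I would verify the self-mapping property on the ball $B_\delta \subset \rC^{3,\a}(\T^d)$ centered at $0$, with $\delta < b/2$ so that the finite-depth noncollision $\dist(\eta,-b) \ge b/2$ needed to invoke Proposition~\ref{prop:linDN} is maintained. Applying \eqref{est:RDN} with $k = 3$ to $R[\eta]\eta$ yields $\|L(\eta)\|_{\rC^{2,\a}} \le C\|\eta\|_{\rC^{3,\a}}^2$. For the forcing term, the splitting $G[\eta]\psi = m(D)\psi + R[\eta]\psi$ and \eqref{est:RDN} give $\|G[\eta]\psi\|_{\rC^{2,\a}} \le C\|\psi\|_{\rC^{3,\a}}$ (the first-order piece $m(D)\psi$ consumes exactly one derivative of $\psi$). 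Pushing both bounds through $m_1(D)$ produces
\begin{equation*}
\|K^{0,g}_\psi(\eta)\|_{\rC^{3,\a}} \le C\bigl(\|\eta\|_{\rC^{3,\a}}^2 + \|\psi\|_{\rC^{3,\a}}\bigr),
\end{equation*}
which maps $B_\delta$ into itself as soon as $\delta$ and $\|\psi\|_{\rC^{3,\a}}/\delta$ are small enough.

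The contraction step then runs along the exact lines of \eqref{Lip:K}. I would split $R[\eta_1]\eta_1 - R[\eta_2]\eta_2 = (R[\eta_1] - R[\eta_2])\eta_1 + R[\eta_2](\eta_1 - \eta_2)$, apply \eqref{contraction:RDN} and \eqref{est:RDN} to estimate each piece in $\rC^{2,\a}$, and use the identity $G[\eta_1]\psi - G[\eta_2]\psi = R[\eta_1]\psi - R[\eta_2]\psi$ for the forcing difference. This yields
\begin{equation*}
\|K^{0,g}_\psi(\eta_1) - K^{0,g}_\psi(\eta_2)\|_{\rC^{3,\a}} \le C\bigl(\|\eta_1\|_{\rC^{3,\a}} + \|\eta_2\|_{\rC^{3,\a}} + \|\psi\|_{\rC^{3,\a}}\bigr)\|\eta_1 - \eta_2\|_{\rC^{3,\a}},
\end{equation*}
so shrinking $\delta$ once more delivers a strict contraction and the Banach fixed point theorem supplies the unique $\eta \in B_\delta$. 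Lipschitz dependence on $\psi$ follows verbatim from the proof of Proposition~\ref{prop:smallTW}, using $\|m_1(D)G[\eta](\psi_1 - \psi_2)\|_{\rC^{3,\a}} \le M(\|\eta\|_{\rC^{3,\a}})\|\psi_1 - \psi_2\|_{\rC^{3,\a}}$.

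The main obstacle is bookkeeping rather than new analysis: one must keep careful track of how each derivative loss is balanced against the reduced order of $m_1$. In the capillary regime the three-derivative smoothing of $m_1(D)$ comfortably absorbs $\psi \in \rC^{1,\a}$; here, with only a single derivative of smoothing, the accounting is tight, leaving no margin and thereby forcing the stronger hypothesis $\psi \in \rC^{3,\a}$ recorded in the statement. Once this is handled, no analytic input beyond Proposition~\ref{prop:linDN} and the H\"older Fourier multiplier estimates of the appendix is required.
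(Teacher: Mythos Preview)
Your proposal is correct and is precisely the ``straightforward modification'' of the proof of Proposition~\ref{prop:smallTW} that the paper invokes in lieu of a written argument: drop the capillary terms, observe that $m_1$ is now order $-1$ rather than $-3$, and compensate by estimating $L(\eta)$ and $G[\eta]\psi$ in $\rC^{2,\a}$ using Proposition~\ref{prop:linDN} at $k=3$, which is exactly why $\psi\in\rC^{3,\a}$ is required.
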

Next we establish the connection between traveling capillary-gravity waves and traveling gravity waves through the vanishing surface tension limit.
\begin{prop}\label{prop:stlim}
Consider $\sigma\in (0, 1)$ and $g> 0$. Let  $\gamma \in \Rr$, $\a\in (0, 1)$, and $\psi\in \rC^{3, \a}(\T^d)$. The following assertions hold. 

(i)  There exist positive constants $\delta_u$ and $C_u$ independent of $\sigma \in (0, 1)$ and depending only on $(g, \gamma, d, b,  \a)$ such that for  any $\delta<\delta_u$ and $\| \psi\|_{\rC^{3, \a}}<\delta/C_u$ there exists a unique traveling capillary-gravity wave in $\rC^{3, \a}(\T^d)$ with norm less than $\delta$.  

(ii) Let $\a'\in (0, \a)$, $\delta<\min\{\delta_u(\a), \delta_u(\a'), \delta_0\}$,  and assume that 
\bq\label{cd:psi:lim}
\| \psi\|_{\rC^{3, \a}}< \delta/\max\{C_u(\a), C_u(\a'), C_0\}.
\eq
Let  $\sigma=\sigma_n\to 0$ and  let $\eta_n\in \rC^{3, \a}(\T^d)$ be the  above unique traveling capillary-gravity waves with $\sigma=\sigma_n$.  Then $\{\eta_n\}$ converges to the unique traveling gravity wave in $\rC^{3, \a'}(\T^d)$. \end{prop}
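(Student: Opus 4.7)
My plan is to rerun the contraction mapping argument of Proposition \ref{prop:smallTW}, but to track the dependence of all constants on $\sigma \in (0,1)$. The key new observation, in the notation of \eqref{m12}, is that
\[
\lvert m_1(\xi)\rvert \le \frac{1}{g\,m(\xi)}, \qquad \xi \ne 0,
\]
so that $m_1(D)$ is of order $-1$ uniformly in $\sigma \in (0,1)$, while $\sigma m_1(D)$ is of order $-3$ uniformly in $\sigma \in (0,1)$ since $|\sigma m_1(\xi)| \lesssim |\xi|^{-3}$ for $|\xi| \ge 1$. Propositions \ref{prop:multiplier} and \ref{prop:ZH} then give $m_1(D) : \rC^{k-1,\alpha} \to \rC^{k, \alpha}$ and $\sigma m_1(D) : \rC^{k-3, \alpha} \to \rC^{k, \alpha}$ with operator norms independent of $\sigma$.

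For part (i), I would regroup
\[
K^{\sigma,g}_\psi(\eta) = -\sigma m_1(D)m(D)R_H(\eta) - \sigma m_1(D) R[\eta]H(\eta) - g\,m_1(D) R[\eta]\eta - m_1(D) G[\eta]\psi
\]
and bound each piece in $\rC^{3,\alpha}$ with $\sigma$-uniform constants. The first term uses that $\sigma m_1(D) m(D)$ is of order $-2$ uniformly, applied to $R_H(\eta) \in \rC^{1,\alpha}$ from \eqref{est:RH}. The second uses $\sigma m_1(D)$ of order $-3$ applied to $R[\eta]H(\eta) \in \rC^{0,\alpha}$ (Proposition \ref{prop:linDN} with $k=1$). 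The third uses $m_1(D)$ of order $-1$ applied to $R[\eta]\eta \in \rC^{2,\alpha}$ (Proposition \ref{prop:linDN} with $k=3$). The fourth uses $m_1(D)$ of order $-1$ with $G[\eta]\psi \in \rC^{2,\alpha}$, which is precisely the source of the strengthened hypothesis $\psi \in \rC^{3,\alpha}$ compared to Proposition \ref{prop:smallTW}. Contraction estimates follow from \eqref{contraction:RDN} and \eqref{contraction:RH} by identical accounting, with all constants depending only on $(g,\gamma,d,b,\alpha)$; this gives (i).

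For part (ii), the uniform bound $\|\eta_n\|_{\rC^{3,\alpha}} < \delta$ together with the compactness of the embedding $\rC^{3,\alpha} \hookrightarrow \rC^{3,\alpha'}$ yields a subsequence $\eta_{n_k} \to \eta_\infty$ in $\rC^{3,\alpha'}$, and passing the pointwise H\"older seminorm inequality to the limit gives $\|\eta_\infty\|_{\rC^{3,\alpha}} \le \delta$. I would pass to the limit in $-\gamma\p_1 \eta_{n_k} = -G[\eta_{n_k}](\sigma_{n_k}H(\eta_{n_k}) + g\eta_{n_k} + \psi)$ via
\[
G[\eta_n] f_n - G[\eta_\infty] f_\infty = G[\eta_n](f_n - f_\infty) + (R[\eta_n] - R[\eta_\infty]) f_\infty
\]
with $f_n = \sigma_n H(\eta_n) + g\eta_n + \psi$; using $\sigma_n H(\eta_n) \to 0$ in $\rC^{1,\alpha'}$ (because $\|H(\eta_n)\|_{\rC^{1,\alpha}}$ is uniformly bounded) together with \eqref{est:RDN} and \eqref{contraction:RDN}, the right-hand side converges in $\rC^{0,\alpha'}$ to $G[\eta_\infty](g\eta_\infty + \psi)$. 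So $\eta_\infty$ is a traveling gravity wave in the $\rC^{3,\alpha}$-ball of radius $\delta$, and under \eqref{cd:psi:lim} Proposition \ref{prop:smallTW:g} identifies it as the unique such wave. A standard subsequence-uniqueness argument then upgrades subsequential convergence to convergence of the whole sequence in $\rC^{3,\alpha'}$.

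The main obstacle is the $\sigma$-uniform bookkeeping in part (i): on its own, $m_1(D)$ loses two orders of smoothing as $\sigma\to 0$ (from $-3$ down to $-1$), and the missing regularity must be absorbed term by term, either by a compensating factor of $\sigma$ in front of $m_1(D)$ (the two $\sigma m_1$ terms) or by extra regularity of the input ($R[\eta]\eta$ and $G[\eta]\psi$, the latter being what forces the stronger assumption $\psi \in \rC^{3,\alpha}$). Once this uniformity is secured, part (ii) reduces to a routine compactness plus uniqueness argument.
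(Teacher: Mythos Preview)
Your proposal is correct and follows essentially the same approach as the paper: the same regrouping of $K^{\sigma,g}_\psi$ into $\sigma m_1(D)$-terms (order $-3$ uniformly) and $m_1(D)$-terms (order $-1$ uniformly), the same identification of why $\psi\in\rC^{3,\alpha}$ is needed, and the same compactness-plus-uniqueness argument for part (ii). Your limit passage in (ii) is spelled out in slightly more detail than the paper's, but the ideas are identical.
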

\begin{proof}
We recall \eqref{m12} for the symbol $m_1$ and set $m_\sigma=\sigma m_1$.  It can be readily checked that $m_1$  satisfies \eqref{multiplier} with $m=-1$ uniformly in $\sigma \in (0, 1)$. On the other hand, thanks to the factor $\sigma$ in $m_\sigma$, $m_\sigma$ satisfies \eqref{multiplier} with $m=-3$ uniformly in $\sigma \in (0, 1)$. Now we have
\bq
K^{\sigma, g}_\psi(\eta)=-m_\sigma(D)\left\{m(D)R_H(\eta)+R[\eta]H(\eta)\right\}-gm_1(D)R[\eta]\eta-m_1(D)G[\eta]\psi.
\eq
Therefore there exists $C_1>0$ independent of $\sigma \in (0, 1)$ such that 
\begin{align*}
\| K^{\sigma, g}_\psi(\eta)\|_{\rC^{3, \a}}&\le C_1\| m(D)R_H(\eta)+R[\eta]H(\eta)\|_{\rC^{0, \a}}\\
&\qquad+C_1\| R[\eta]\eta\|_{\rC^{2, \a}}+C_1\| G[\eta]\psi\|_{\rC^{2, \a}}.
\end{align*}
We consider $\| \eta\|_{\rC^{3, \a}}<b/2$ so that $\dist(\eta, -b)\ge b/2>0$. Then the remainder estimates \eqref{est:RDN} and \eqref{est:RH} yield 
\bq\label{uniest:K}
\| K^{\sigma, g}_\psi(\eta)\|_{\rC^{3, \a}}\le C_2\| \eta\|_{\rC^{3, \a}}^2+C_2\| \psi\|_{\rC^{3, \a}},
\eq
where $C_2>0$ is independent of $\sigma \in (0, 1)$ and depends only on $(g, \gamma, d, b, \a)$. By an analogous argument using the contraction estimates \eqref{contraction:RDN} and \eqref{contraction:RH}, we find 
\bq\label{uniestcontra:K}
\| K^{\sigma, g}_\psi(\eta_1)-K^{\sigma, g}_\psi(\eta_2)\|_{\rC^{3, \a}}\le C_3(\| \eta_1\|_{\rC^{3, \a}}+\| \eta_2\|_{\rC^{3, \a}})\| \eta_1-\eta_2\|_{\rC^{3, \a}},
\eq
where $C_3>0$ is independent of $\sigma \in (0, 1)$  and depends only on $(g, \gamma, d, b,  \a)$. In view of  \eqref{uniest:K} and \eqref{uniestcontra:K}, there exist positive constants $\delta_u$ and $C_u$ independent of $\sigma \in (0, 1)$ such that for  any $\delta<\delta_u$ and $\| \psi\|_{\rC^{3, \a}}<\delta_u/C_u$, $K^{\sigma, g}_\psi$ has a unique fixed point in the ball $B_{\delta}$ of $\rC^{3, \a}(\T^d)$. Note that $\delta_u$ and $C_u$ depend only on $(g, \gamma, d, b,  \a)$.

Fix $\a'\in (0, \a)$. Let $\delta<\min\{\delta_u(\a), \delta_u(\a'), \delta_0\}$  and assume that $\psi$ satisfies \eqref{cd:psi:lim}. Let $\sigma_n\to 0$ and let $\eta_n$ be the unique fixed point of $K^{\sigma_n, g}_\psi$ in $\rC^{3, \a}(\T^d)$ with norm less than $\delta$.   Since $\T^d$ is compact, combining the Arzel\`a-Ascoli theorem  with interpolation, we obtain a subsequence $\{\eta_{n_k}\}$ converging strongly to some $\eta_0$ in $C^{3, \a'}(\T^d)$.  Now we recall that the equation $\eta=K^{\sigma, g}(\eta)$ is equivalent to \eqref{eq:tw:0}, hence 
\[
 -\gamma \p_1\eta_n=-G[\eta_n]\left(\sigma_n H(\eta_n)+g\eta_n+\psi\right).
\]
 By virtue of the contraction estimates \eqref{contraction:RDN} and \eqref{contraction:RH}, one can pass to the limit in the topology of $C^{0, \a'}(\T^d)$ along the subsequence $n_k$ to deduce that $\eta_0\in \rC^{3, \a'}$ is a traveling gravity   wave.  In addition, we have $\| \eta_0\|_{\rC^{3, \a'}}\le \delta <\delta_0$ and $\| \psi\|_{\rC^{k, \a'}}\le \| \psi\|_{\rC^{k, \a}}<\delta_0/C_0$, hence  $\eta_0$ is the unique traveling gravity wave according to Proposition \ref{prop:smallTW:g}. Therefore, the entire sequence $\{\eta_n\}$ converges to $\eta_0$.  The proof of the proposition is complete. 

\end{proof}
 The next section is devoted to the proof of parts II and III of Theorem \ref{theo:main}.
\subsection{Large traveling waves} 
By virtue of Propositions \ref{theo:iso:G} and \ref{theo:iso:H} we can equivalently rewrite \eqref{eq:tw}  as
\bq\label{def:cF}
\cF(\eta, \ka):=\eta+F(\eta, \ka)=0
\eq
where 
\bq\label{def:F}
F(\eta, \ka)=  (\sigma H+g I)^{-1}\left(-\gamma(G[\eta])^{-1} \p_1\eta+\ka \varphi\right).
\eq
The parameters are 
\bq
\sigma>0,\quad g> 0,\quad \gamma \in \Rr\setminus\{0\}.
\eq
For $k\ge 0$ and $\a\in (0, 1)$ we denote 
\bq
V_{k, \a}=
\begin{cases}\left \{\eta\in \rC^{k, \a}(\T^d): \inf_{x\in \T^d}(\eta(x)+b)>0\right\}\quad\text{for finite depth},\\
\rC^{k, \a}(\T^d)\quad\text{for infinite depth}.
\end{cases}
\eq
We begin by proving the regularity statement III in Theorem \ref{theo:main}.
\begin{lemm}
Suppose that $\eta\in \rC^{3, \a}(\T^d)$ satisfies \eqref{def:cF} for some $\ka \in \Rr$ and $\varphi\in \rC^{1, \a}(\T^d)$. For any $k\ge 3$ and $\mu\in (0, 1)$, if $\varphi\in \rC^{k-2, \mu}(\T^d)$ then $ \eta \in \rC^{k, \mu}(\T^d)$. 
\end{lemm}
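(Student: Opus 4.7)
The plan is to recast \eqref{def:cF} as a quasilinear elliptic equation for $\eta$ on $\T^d$ and then run a Schauder bootstrap. Using the invertibility of $\sigma H+gI$ from Proposition \ref{theo:iso:H}, the identity $\cF(\eta,\kappa)=0$ is equivalent to
\[
\sigma H(\eta)+g\eta=\gamma(G[\eta])^{-1}\partial_1\eta-\kappa\varphi.
\]
Recalling from the proof of Proposition \ref{theo:iso:H} that $-H(u)=a^{ij}(\nabla u)\partial_{ij}u$, with $a^{ij}$ smooth and uniformly elliptic whenever $|\nabla u|$ is bounded, this becomes a linear elliptic equation for $\eta$ with $\nabla\eta$-dependent coefficients, to which interior Schauder estimates on the compact torus apply.

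I would then bootstrap in two phases. \textbf{Phase 1} raises the H\"older exponent from $\alpha$ to $\mu$. Since $\T^d$ is compact, $\rC^{3,\alpha}(\T^d)\subset C^{2,1}(\T^d)\subset C^{2,\beta}(\T^d)$ for every $\beta\in(0,1)$, so $a^{ij}(\nabla\eta)\in C^{1,\mu}(\T^d)$. Proposition \ref{theo:iso:G} applied with $k=3$ and exponent $\alpha$ yields $(G[\eta])^{-1}\partial_1\eta\in\rC^{3,\alpha}(\T^d)\subset C^{2,\mu}(\T^d)$, and $\varphi\in\rC^{k-2,\mu}(\T^d)\subset\rC^{1,\mu}(\T^d)$ since $k\ge 3$. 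The right-hand side then lies in $\rC^{1,\mu}(\T^d)$, and a Schauder estimate at order $m=1$ delivers $\eta\in\rC^{3,\mu}(\T^d)$.

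\textbf{Phase 2} iteratively increases the integer regularity. Assuming inductively that $\eta\in\rC^{j,\mu}(\T^d)$ for some $3\le j<k$, one obtains $a^{ij}(\nabla\eta)\in C^{j-1,\mu}$, $(G[\eta])^{-1}\partial_1\eta\in\rC^{j,\mu}$ by Proposition \ref{theo:iso:G}, and hence the right-hand side lies in $\rC^{\min(j,k-2),\mu}$. Schauder estimates at order $m=\min(j-1,k-2)$ then yield $\eta\in\rC^{\min(j+1,k),\mu}(\T^d)$. After at most $k-3$ such iterations this gives $\eta\in\rC^{k,\mu}(\T^d)$.

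The hard part is Phase 1: if $\mu>\alpha$, applying Schauder directly to the base regularity $\eta\in\rC^{3,\alpha}$ with coefficients $a^{ij}(\nabla\eta)\in C^{2,\alpha}$ would only yield exponent $\min(\alpha,\mu)=\alpha$. The resolution is the compact-manifold embedding $C^{2,\alpha}\subset C^{1,1}\subset C^{1,\mu}$, which sacrifices one integer derivative in exchange for the desired H\"older exponent in the coefficients. Once $\eta\in\rC^{3,\mu}$ is secured, the remaining iterations are routine, and the argument gives the same conclusion both in the finite depth case (where the positivity hypothesis $\inf_{\T^d}(\eta+b)>0$ required by Propositions \ref{theo:iso:G} is preserved throughout the bootstrap since $\eta$ itself does not change) and in the infinite depth case.
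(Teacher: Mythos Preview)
Your argument is correct and follows essentially the same route as the paper. The paper also bootstraps by first passing from exponent $\alpha$ to $\mu$ via the embedding $\rC^{3,\alpha}\subset\rC^{1,\mu}$ and then iteratively raising the integer order; the only cosmetic difference is that where you write out $-H(\eta)=a^{ij}(\nabla\eta)\partial_{ij}\eta$ and invoke Schauder estimates directly, the paper instead applies the black-box inverse $(\sigma H+gI)^{-1}:\rC^{j,\mu}\to\rC^{j+2,\mu}$ from Proposition~\ref{theo:iso:H} (whose proof is itself Schauder-based) to the identity $\eta=-F(\eta,\kappa)$.
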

\begin{proof}
Combining Propositions \ref{theo:iso:G} and \ref{theo:iso:H}, we obtain
\[
(\sigma H+g I)^{-1}: \rC^{j, \nu}(\T^d)\to \rC^{j+2, \nu}(\T^d)\]

and 
\[
\rC^{j, \nu}(\T^d)\ni \eta\mapsto (G[\eta])^{-1}\p_1\eta \in \rC^{j, \mu}(\T^d)
\]
for $j\ge 1$. 
We first consider the case $k=3$, $\mu\in (0, 1)$, and  assume $\varphi\in \rC^{1, \mu}$. Then we have $-\gamma(G[\eta])^{-1} \p_1\eta+\ka \varphi\in \rC^{3, \alpha}+\rC^{1, \mu}\subset \rC^{1, \mu}$, and hence $\eta=-F(\eta, \ka)\in  \rC^{3, \mu}$.

Next, we consider $k\ge 4$, $\mu\in (0, 1)$, and assume  $\varphi\in \rC^{k-2, \mu}$. We have $\eta \in \rC^{3, \mu}$ from the case $k=3$.  Consequently $-\gamma(G[\eta])^{-1} \p_1\eta+\ka \varphi\in \rC^{3, \mu}+\rC^{k-2, \mu}= \rC^{\min\{k-2, 3\}, \mu}$ and $\eta=-F(\eta, \ka)\in \rC^{\min\{k-2, 3\}, \mu}= \rC^{\min\{k, 5\}, \mu}$. But this in turn implies $\eta=-F(\eta, \ka)\in \rC^{\min\{k, 5, k-2\}+2, \mu}=\rC^{\min\{k, 7\}, \mu}$ by the same argument. After finitely many steps we obtain that $ \eta \in \rC^{k, \mu}$.
\end{proof}
The remainder of this section is devoted to the proof of part II of Theorem \ref{theo:main}.  We first establish the compactness of the nonlinear operator $F$.
\begin{lemm}
For $k\ge 3$, $\a\in (0, 1)$ and $\varphi\in \rC^{k-2, \a}(\T^d)$, $F: V_{k, \a}\times \Rr\to \rC^{k, \a}(\T^d)$ is a compact operator. Moreover, $F$ is Lipschitz continuous on  bounded subsets  of  $V_{k, \a}\times \Rr$.
\end{lemm}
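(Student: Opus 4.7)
The plan is to exploit the two-derivative gain of $S := (\sigma H+gI)^{-1}$ from Proposition \ref{theo:iso:H} together with the continuity and contraction estimates for $(G[\eta])^{-1}$ from Propositions \ref{theo:iso:G} and \ref{prop:linDN}. Throughout, by a \emph{bounded subset} of $V_{k,\a}\times \Rr$ I understand a set on which $\|\eta\|_{C^{k,\a}}+|\ka|\le R$ and, in the finite depth case, $\inf_{\T^d}(\eta+b)\ge c^0>0$; all constants below may depend on $R$ and $c^0$. Well-definedness of $F$ into $\rC^{k,\a}$ is already immediate from these propositions: $\p_1\eta\in \rC^{k-1,\a}$, so $(G[\eta])^{-1}\p_1\eta\in \rC^{k,\a}\subset \rC^{k-2,\a}$; adding $\ka\varphi\in \rC^{k-2,\a}$ and applying $S$ lands in $\rC^{k,\a}$. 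The chief obstacle is that $S$ is \emph{nonlinear}, which rules out splitting $F$ additively into separate contributions of its two summands---but this is precisely neutralised by the quantitative Lipschitz bound \eqref{continverse:H}.

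For compactness, given a sequence $(\eta_n,\ka_n)$ in a bounded subset, I would first extract a subsequence with $\ka_n\to \ka_0\in \Rr$. Since $\p_1\eta_n$ is uniformly bounded in $\rC^{k-1,\a}$, Proposition \ref{theo:iso:G} produces a uniform bound on $(G[\eta_n])^{-1}\p_1\eta_n$ in $\rC^{k,\a}$. The compact embedding $\rC^{k,\a}(\T^d)\hookrightarrow \rC^{k-1,\a}(\T^d)$ on the torus then yields a further subsequence along which $(G[\eta_n])^{-1}\p_1\eta_n$ converges in $\rC^{k-1,\a}\subset \rC^{k-2,\a}$. Combined with $\ka_n\varphi\to \ka_0\varphi$ in $\rC^{k-2,\a}$, the full argument of $S$ converges in $\rC^{k-2,\a}$, and \eqref{continverse:H} upgrades this to convergence of $F(\eta_n,\ka_n)$ in $\rC^{k,\a}$.

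For the Lipschitz estimate on a bounded subset, let $(\eta_j,\ka_j)$, $j=1,2$, both lie in it. Applying \eqref{continverse:H} once,
\[
\|F(\eta_1,\ka_1)-F(\eta_2,\ka_2)\|_{\rC^{k,\a}}\le C\|\gamma\{(G[\eta_1])^{-1}\p_1\eta_1-(G[\eta_2])^{-1}\p_1\eta_2\}-(\ka_1-\ka_2)\varphi\|_{\rC^{k-2,\a}}.
\]
The $\varphi$-term contributes $|\ka_1-\ka_2|\|\varphi\|_{\rC^{k-2,\a}}$. For the remaining difference, I would insert the resolvent identity
\[
(G[\eta_1])^{-1}\p_1\eta_1-(G[\eta_2])^{-1}\p_1\eta_2=(G[\eta_1])^{-1}\p_1(\eta_1-\eta_2)+(G[\eta_1])^{-1}(G[\eta_2]-G[\eta_1])(G[\eta_2])^{-1}\p_1\eta_2,
\]
together with $G[\eta_2]-G[\eta_1]=R[\eta_2]-R[\eta_1]$. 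Bounding the outer inverses by Proposition \ref{theo:iso:G} and the middle factor by the contraction estimate \eqref{contraction:RDN}---applied with $f=(G[\eta_2])^{-1}\p_1\eta_2$, which is uniformly bounded in $\rC^{k,\a}$---produces a $\rC^{k,\a}$-norm bound of size $C\|\eta_1-\eta_2\|_{\rC^{k,\a}}$ for this difference, and this controls its $\rC^{k-2,\a}$-norm a fortiori. Combining the $\ka$- and $\eta$-contributions yields the claimed Lipschitz estimate.
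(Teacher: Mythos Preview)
Your proof is correct and follows essentially the same approach as the paper's, relying on the Lipschitz continuity \eqref{continverse:H} of $(\sigma H+gI)^{-1}$, the bound \eqref{inverseDN} on $(G[\eta])^{-1}$, and the resolvent identity combined with \eqref{contraction:RDN} for the difference of inverses. The only minor variation is in the compactness step: the paper first extracts a subsequence with $\eta_n\to\eta$ in $V_{k-1,\a}$ and then invokes continuity of $\eta\mapsto (G[\eta])^{-1}\p_1\eta$ from $V_{k-1,\a}$ to $\rC^{k-1,\a}$, whereas you bypass this by noting directly that $(G[\eta_n])^{-1}\p_1\eta_n$ is bounded in $\rC^{k,\a}$ and extracting a convergent subsequence in $\rC^{k-1,\a}$ via the compact embedding---a slightly more economical route that does not require identifying the limit of the $\eta_n$.
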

\begin{proof}
Let $\{(\eta_n, \kappa_n)\}$ be a bounded sequence in  $ V_{k, \a}\times \Rr$. By virtue of the Arzel\`a-Ascoli theorem and interpolation,  the embedding $\rC^{k, \a}(\T^d)\to\rC^{k-1, \a}(\T^d)$ is compact, so upon extracting a subsequence, we have $(\eta_n, \kappa_n)\to (\eta, \ka)$ in $V_{k-1, \a}\times \Rr$. We claim that the operator $F_0$ defined by 
\[
 F_0(\eta):=(G[\eta])^{-1} \p_1\eta,
\]
is continuous from $V_{k-1, \a}$ to  $\rC^{k-1, \a}(\T^d)$. Taking this for granted and using the assumption that $\varphi\in \rC^{k-2, \a}(\T^d)$, we deduce  
 \[
-\gamma(G[\eta_n])^{-1} \p_1\eta_n+\ka_n \varphi\to -\gamma(G[\eta])^{-1} \p_1\eta+\ka \varphi\quad\text{in~} \rC^{k-2, \a}(\T^d).
\]
Then, by virtue of Proposition \ref{theo:iso:H}, we have $F(\eta_n, \kappa_n)\to F(\eta, \kappa)$ in $\rC^{k, \a}(\T^d)$, proving the compactness of $F$.  

Since $k-1> 1$, in order to obtain the above claim, we now prove that $F_0$ is continuous from $V_{j, \a}$ to $\rC^{j, \a}(\T^d)$ for any $j\ge 1$.  For any $\eta_0\in V_{j, \a}$, Proposition \ref{theo:iso:G} implies that $\eta\mapsto (G[\eta_0])^{-1}\p_1\eta: V_{j, \a}\to \rC^{j, \a}(\T^d)$ is  a bounded linear  operator  with norm bounded by $C(\| \eta_0\|_{\rC^{j, \a}})$. On the other hand, for any $f\in\rC^{j-1, \a}(\T^d)$ and $\eta_j\in V_{j, \a}$, we have
\bq
(G[\eta_1])^{-1}f-(G[\eta_2])^{-1}f=(G[\eta_1])^{-1}\left(G[\eta_2]h-G[\eta_1]h\right),\quad h:=(G[\eta_2])^{-1}f.
\eq
Applying \eqref{inverseDN} for the boundedness of $(G[\eta_j])^{-1}$ and \eqref{contraction:RDN} for the contraction $G[\eta_1]-G[\eta_2]=R[\eta_1]-R[\eta_2]$, we find  
\begin{align*}
\| (G[\eta_1])^{-1}f-(G[\eta_2])^{-1}f\|_{\rC^{j, \a}}&\le M_0(\| \eta_1\|_{\rC^{j, \a}})\| G[\eta_2]h-G[\eta_1]h\|_{\rC^{j-1, \a}}\\
&\le M_1(\| \eta_1\|_{\rC^{j, \a}}, \| \eta_2\|_{\rC^{j, \a}})\|\eta_1-\eta_2\|_{\rC^{j-1, \a}}\| h\|_{\rC^{j, \a}}\\
& \le M_2(\| \eta_1\|_{\rC^{j, \a}}, \| \eta_2\|_{\rC^{j, \a}})\|\eta_1-\eta_2\|_{\rC^{j, \a}}\| f\|_{\rC^{j-1, \a}}. 
\end{align*}
This actually shows that for any $f\in \rC^{j-1, \a}(\T^d)$ the operator $V_{j, \a}\ni \eta\mapsto (G[\eta])^{-1}f\in \rC^{j, \a}(\T^d)$ is Lipschitz continuous on bounded subsets of $V_{j, \a}$, $j\ge 1$. Consequently $F_0$ is Lipschitz continuous on bounded subsets of $V_{j, \a}$, $j\ge 1$. By Proposition \ref{theo:iso:H}, $(\sigma H+g I)^{-1}: \rC^{j, \a}(\T^d)\to \rC^{j+2, \a}(\T^d)$ is Lipschitz continuous on bounded subsets of $\rC^{j, \a}(\T^d)$, $j\ge 1$. Therefore, for $k\ge 3$ and $\varphi\in \rC^{k-2, \a}(\T^d)$, $F$ is Lipschitz continuous on  bounded subsets of  $V_{k, \a}\times \Rr$.
\end{proof}
The compactness of $F$ motivates  us to invoke the following Global Implicit Function Theorem which is based upon the Leray-Schauder degree theory.
\begin{theo}(Global Implicit Function Theorem)\label{GIFT}
Let $X$  be  Banach space and let $U$ be an open subset of $X\times \Rr$. Let $F: U\to X$ be a compact operator and set $\cF(\eta, \ka)=\eta+F(\eta, \ka)$.  Assume that $\cF(\eta_0, \ka_0)=0$ for some $(\eta_0, \ka_0)\in X\times \Rr$, and the Fr\'etchet derivative  $D_\eta\cF(\eta_0, \ka_0)\in L(X, X)$ exists and is bijective. Let $\mathcal{S}$ denote the closure in $X\times \Rr$ of the set of all solutions of $\cF(\eta, \ka)=0$ in $U$ and let $\mathcal{C}$ denote the connected component of $\mathcal{S}$ to which $(\eta_0, \ka_0)$ belongs. Then one of the following three alternatives is valid.
\begin{itemize}
\item[(i)] $\mathcal{C}$ is unbounded.
\item[(ii)] $ \mathcal{C}\setminus\{(\eta_0, \ka_0)\}$ is connected.
\item[(iii)] $\mathcal{C}\cap \p U\ne \emptyset$. 
\end{itemize}
\end{theo}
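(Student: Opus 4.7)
The plan is to prove this Rabinowitz-style global continuation theorem by contradiction, combining Leray--Schauder degree theory with a Whyburn-type topological separation argument. First, observe that since $F : U \to X$ is compact, $I + F(\cdot,\kappa)$ is a compact perturbation of the identity, so its Leray--Schauder degree is well defined on any bounded open set whose boundary contains no zero. In particular, $D_\eta \mathcal{F}(\eta_0, \kappa_0) = I + D_\eta F(\eta_0, \kappa_0)$ is both bijective and of identity-plus-compact type, so its Leray--Schauder index at $\eta_0$ equals $\pm 1$. The classical local implicit function theorem then supplies a continuous curve $\kappa \mapsto \eta(\kappa)$ of solutions through $(\eta_0, \kappa_0)$ that is unique in a small neighborhood.

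Next I would argue by contradiction, assuming that none of (i), (ii), (iii) holds. Then $\mathcal{C}$ is bounded and stays a positive distance from $\partial U$, and compactness of $F$ upgrades this to the assertion that $\mathcal{C}$ is actually a compact subset of $U$. Failure of (ii) yields a decomposition $\mathcal{C} \setminus \{(\eta_0, \kappa_0)\} = A_1 \sqcup A_2$ into two disjoint nonempty relatively closed subsets; adjoin $(\eta_0,\kappa_0)$ to $A_1$ and call the result $K_1$, while $K_2 := A_2$. Using a Whyburn-type separation lemma (applicable since $K_1, K_2$ are disjoint compacta in the locally compact solution set $\mathcal{S}$), one produces disjoint bounded open sets $W_1, W_2 \subset U$ with $\overline{W_j} \subset U$, $K_j \subset W_j$, and $\partial W_j \cap \mathcal{S} = \emptyset$.

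Introduce the $\kappa$-slices $W_j^\kappa = \{\eta : (\eta, \kappa) \in W_j\}$ and the slice degrees $d_j(\kappa) := \deg(I + F(\cdot, \kappa), W_j^\kappa, 0)$. By homotopy invariance, each $d_j$ is constant on every interval of $\kappa$ on which $W_j^\kappa$ deforms continuously and $I + F(\cdot,\kappa)$ has no zero on $\partial W_j^\kappa$; the condition $\partial W_j \cap \mathcal{S} = \emptyset$ together with compactness of $\mathcal{C}$ ensures this holds globally in $\kappa$. Since $\mathcal{C}$ is bounded, $W_j^\kappa = \emptyset$ for $|\kappa|$ large, giving $d_j \equiv 0$. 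On the other hand, for $\kappa$ close to $\kappa_0$ the only zero in $W_1^\kappa$ is the nondegenerate $\eta(\kappa)$, so $d_1(\kappa) = \pm 1$, contradicting $d_1 \equiv 0$.

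The main obstacle is the Whyburn-type separation step, which is precisely what exploits the failure of alternative (ii): if $\mathcal{C} \setminus \{(\eta_0, \kappa_0)\}$ were connected (the ``loop'' picture anticipated in the paper's introduction) then no such disjoint enclosure of the two pieces is possible, and the degree argument collapses. This explains structurally why alternative (ii) must appear in the trichotomy. A secondary subtlety is to arrange $W_j$ so that the $\kappa$-slices remain well behaved for the degree computation, which follows from the standard parameter-dependent formulation of Leray--Schauder theory together with the joint continuity and compactness of $F$.
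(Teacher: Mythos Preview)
The paper does not actually prove this theorem: it cites Theorem~II.6.1 and Remark~II.6.2 in Kielh\"ofer, observing that the $C^1$ hypothesis on $D_\eta F$ near $(\eta_0,\ka_0)$ can be dropped (at the price that the local solution curve need not be unique) and that passing from $U=X\times\Rr$ to a general open $U$ merely adds alternative~(iii). Your degree-plus-Whyburn outline is exactly the machinery behind Kielh\"ofer's proof, so the overall strategy is right. Two steps, however, do not go through as you have written them.

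First, you invoke the classical implicit function theorem to produce a \emph{unique} local curve $\ka\mapsto\eta(\ka)$, and later use it to assert that for $\ka$ near $\ka_0$ the only zero in $W_1^\ka$ is $\eta(\ka)$, whence $d_1(\ka)=\pm1$. The hypotheses give only that $D_\eta\cF(\eta_0,\ka_0)$ exists and is bijective at the single point $(\eta_0,\ka_0)$; there is no $C^1$ regularity in a neighborhood, so the classical IFT does not apply. The paper singles out precisely this issue. What survives is that $\eta_0$ is an isolated zero of $\cF(\cdot,\ka_0)$ with Leray--Schauder index $\pm1$ (homotope to the linearization along $t\mapsto D_\eta\cF(\eta_0,\ka_0)(\eta-\eta_0)+t\,r(\eta)$, where $r$ is the remainder), and the degree contradiction must be extracted from this alone, computed at $\ka=\ka_0$ rather than via a curve $\eta(\ka)$.

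Second, your sets $K_1=A_1\cup\{(\eta_0,\ka_0)\}$ and $K_2=A_2$ are not both compact. Since $\mathcal{C}$ is connected and $A_1,A_2$ are a nontrivial relatively clopen partition of $\mathcal{C}\setminus\{(\eta_0,\ka_0)\}$, the point $(\eta_0,\ka_0)$ lies in $\overline{A_1}\cap\overline{A_2}$ (closures in $\mathcal{C}$). Hence $A_2$ is not closed in $\mathcal{C}$ and $K_2$ is not compact, so the Whyburn separation lemma cannot be applied to the pair $(K_1,K_2)$ as stated. The standard remedy is to first isolate $(\eta_0,\ka_0)$ in a small product neighborhood (using that $\eta_0$ is an isolated zero at level $\ka_0$), then apply the separation argument to $\mathcal{C}$ minus this neighborhood inside a suitable compact piece of $\mathcal{S}$, and run the degree comparison between the small neighborhood and the separated pieces. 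This is where the bookkeeping in Kielh\"ofer's proof becomes delicate; your sketch has the right architecture but skips the step that makes the separation legitimate.
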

\begin{proof}
For $U=X\times \Rr$ and under the additional assumption that the Frech\'et derivative $D_\eta F$ is continuous near $(\eta_0, \ka_0)$, this  is Theorem II.6.1 in \cite{Kielhofer}. As pointed out  in Remark II.6.2 in \cite{Kielhofer}, the preceding assumption can be dropped at the price that the local curve of solutions near $(\eta_0, \ka_0)$ is not necessarily unique. On the other hand, the proofs of the cited results extend trivially to the case of general open set $U$ in $X\times \Rr$ at the price of adding the third alternative (iii). 
\end{proof}
We  apply Theorem \ref{GIFT} to our nonlinear operator $F$ \eqref{def:F} with 
\begin{align}\label{defX}
 &X=\rC^{3, \a}(\T^d),~\varphi\in \rC^{1, \a}(\T^d)\setminus\{0\},~ \a\in (0, 1),\\
 &U=V\times \Rr,\quad V:= V_{3, \a}.
\end{align}
We also define the projections
\bq\label{def:Pj}
\Pi_1(u, \ka)=u,\quad \Pi_2(u, \ka)=\ka.
\eq
 Clearly $\cF(0, 0)=0$. We note that although the uniqueness of the local curve of solutions near $(\eta, \ka)=(0, 0)$ does not follow from Theorem \ref{GIFT}, it is guaranteed by Proposition \ref{prop:smallTW}. Next, we prove that $\eta=0$ is the unique solution in $V$ of $\cF(\cdot, 0)=0$. This will in turn allow us to exclude the `loop' scenario in the alternative (ii).
\begin{lemm}\label{lemm:rigidity}
 $\eta=0$ is the unique solution in $V$ of the equation $\eta+F(\eta, 0)=0$.
\end{lemm}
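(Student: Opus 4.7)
The plan is to undo the two layers of inversion in the definition of $F$ and then derive an energy identity on the pre-inverted equation. Since $\sigma H+gI$ and $G[\eta]$ are invertible on the relevant H\"older spaces (Propositions \ref{theo:iso:H} and \ref{theo:iso:G}), and since both $(\sigma H+gI)\eta$ and $\gamma\p_1\eta$ have mean zero (so that the compositions are well defined), the equation $\eta+F(\eta,0)=0$ is equivalent to
\begin{equation*}
G[\eta]\bigl(\sigma H(\eta)+g\eta\bigr)=\gamma\,\p_1\eta.
\end{equation*}
I would then pair both sides with the multiplier $f:=\sigma H(\eta)+g\eta$, which lies in $\rH^\mez(\T^d)$ since $\eta\in\rC^{3,\a}(\T^d)$ has mean zero and $H(\eta)$ is a divergence.

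Two periodicity identities kill the resulting right-hand side. First, $\int_{\T^d}\eta\,\p_1\eta\,dx=\tfrac12\int_{\T^d}\p_1(\eta^2)\,dx=0$. Second, integrating by parts gives
\begin{equation*}
\int_{\T^d}\p_1\eta\,H(\eta)\,dx=\int_{\T^d}\frac{\na\eta\cdot\na\p_1\eta}{\sqrt{1+|\na\eta|^2}}\,dx=\int_{\T^d}\p_1\!\bigl(\sqrt{1+|\na\eta|^2}\bigr)dx=0,
\end{equation*}
which encodes the translation invariance in the $x_1$-direction of the surface area functional $\int\sqrt{1+|\na\eta|^2}\,dx$. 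Hence the duality pairing $\gamma\langle\p_1\eta,f\rangle_{\rH^{-\mez},\rH^\mez}$ vanishes.

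By Proposition \ref{DN:positive}, $G[\eta]$ is positive definite on $\rH^\mez(\T^d)$, so $\langle G[\eta]f,f\rangle_{\rH^{-\mez},\rH^\mez}=0$ forces $f=\sigma H(\eta)+g\eta=0$. When $g>0$ this immediately yields $\eta=0$ via Proposition \ref{theo:iso:H}(i); when $g=0$ the equation reduces to $\sigma H(\eta)=0$, and since $\eta\in\rC^{3,\a}$ has mean zero, Proposition \ref{theo:iso:H}(ii) again concludes $\eta=0$. I expect no serious obstacle: the main (minor) technical point is verifying the mean-zero conditions needed for the equivalence of the two formulations, and the conceptual heart of the argument is the translation-invariance identity $\int_{\T^d}\p_1\eta\,H(\eta)\,dx=0$, which together with periodicity of $\eta$ renders $f$ orthogonal to $\p_1\eta$.
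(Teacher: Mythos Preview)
Your proposal is correct and follows essentially the same route as the paper: unwind $\eta+F(\eta,0)=0$ to the traveling wave equation, multiply by $f=\sigma H(\eta)+g\eta$, exploit the periodicity identities $\int\p_1(\eta^2)=0$ and $\int\p_1\eta\,H(\eta)=\int\p_1\sqrt{1+|\na\eta|^2}=0$ to kill the right-hand side, invoke the positive-definiteness of $G[\eta]$ on $\rH^{\mez}$ to force $f=0$, and finish via Proposition~\ref{theo:iso:H}. The paper's proof is the same argument with the same multiplier and the same two identities.
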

\begin{proof}
Suppose that $\eta\in V$ satisfies $\eta+F(\eta, 0)=0$ which is equivalent to \eqref{eq:tw} with $\ka=0$, i.e.
\bq\label{eq:tw:free}
-\gamma \p_1\eta=-G[\eta](\sigma H(\eta)+g\eta).
\eq
Then Theorem \ref{theo:iso:G} implies that $\sigma H(\eta)+g\eta=\gamma (G[\eta])^{-1}\p_1\eta\in \rC^{3, \a}(\T^d)$.  Multiplying \eqref{eq:tw:free} by $\sigma H(\eta)+g\eta$ and integrating, we find
\bq
\begin{aligned}
-\int_{\T^d}(\sigma H(\eta)+g\eta)G[\eta](\sigma H(\eta)+g\eta)&= -\gamma \int_{\T^d}\p_1\eta (\sigma H(\eta)+g\eta)dx\\
&=-\mez\gamma \sigma  \int_{\T^d}\frac{\p_1|\na \eta|^2}{\sqrt{1+|\na \eta|^2}}dx-\mez \gamma g\int_{\T^d} \p_1 \eta^2dx\\
&=-\gamma \sigma \int_{\T^d}\p_1\sqrt{1+|\na \eta|^2}dx=0.
\end{aligned}
\eq
 Hence $\sigma H(\eta)+g\eta=0$ since $\sigma H(\eta)+g\eta\in  \rC^{3, \a}(\T^d)\subset \rH^\mez(\T)$ and $G[\eta]$ is positive-definite on $\rH^\mez(\T)$ by Proposition \ref{DN:positive}. Therefore $\eta=0$ by Proposition \ref{theo:iso:H}. Alternatively, we can integrate by parts \[
0=g\int_{\T^d}\eta^2dx-\sigma\int_{\T^d}\eta  \di \left(\frac{\na \eta}{\sqrt{1+|\na \eta|^2}}\right)dx=g\int_{\T^d}\eta^2dx+\sigma\int_{\T^d}  \frac{|\na \eta|^2}{\sqrt{1+|\na \eta|^2}}dx
\]
to deduce $\eta=0$.
\end{proof}
We verify the conditions for $\cF$  in the next lemma.
\begin{lemm}
$\cF(\cdot, 0)$ is Fr\'etchet differentiable at $0$ and 
\bq\label{formula:DF}
D_\eta\cF(0, 0)=I-\gamma(-\sigma \Delta+gI)^{-1}m^{-1}(D)\p_1,
\eq
 where $m$ is given by \eqref{def:m}. Moreover, $D_\eta\cF(0, 0):X\to X$ is an isomorphism. 
\end{lemm}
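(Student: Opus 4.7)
The plan is to write $F(\eta,0)=\mathcal{K}(h(\eta))$ with $\mathcal{K}:=(\sigma H+gI)^{-1}$ and $h(\eta):=-\gamma(G[\eta])^{-1}\p_1\eta$, then to linearize each nonlinear operator at zero and control the remainders quadratically in $\rC^{3,\alpha}(\T^d)$. Since $h(0)=0$ and $\mathcal{K}(0)=0$ by Proposition \ref{theo:iso:H}, the chain-rule candidates are $h'(0)=-\gamma m^{-1}(D)\p_1$ and $\mathcal{K}'(0)=(-\sigma\Delta+gI)^{-1}$, whose composition gives the formula \eqref{formula:DF} for $D_\eta\cF(0,0)=I-\gamma T$ with $T:=(-\sigma\Delta+gI)^{-1}m^{-1}(D)\p_1$. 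For the isomorphism claim, I will show that $\gamma T$ is compact on $\rC^{3,\alpha}(\T^d)$ and that $I-\gamma T$ is injective, then invoke the Fredholm alternative.

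\textbf{Step 1 (Linearization of $h$).} Applying $G[\eta]=m(D)+R[\eta]$ from Proposition \ref{prop:linDN} to the function $m^{-1}(D)\p_1\eta$ yields the resolvent-type identity
\[
(G[\eta])^{-1}\p_1\eta-m^{-1}(D)\p_1\eta=-(G[\eta])^{-1}R[\eta]\bigl(m^{-1}(D)\p_1\eta\bigr).
\]
Since $m^{-1}(D)\p_1$ is a Fourier multiplier of order $0$, Propositions \ref{prop:multiplier}--\ref{prop:ZH} give $\|m^{-1}(D)\p_1\eta\|_{\rC^{3,\alpha}}\lesssim\|\eta\|_{\rC^{3,\alpha}}$. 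Combining the remainder bound \eqref{est:RDN} with $k=3$ and the inverse bound \eqref{inverseDN} then produces, for $\eta$ in a small neighborhood of $0$ in $V$,
\[
\|h(\eta)-h'(0)\eta\|_{\rC^{3,\alpha}}\lesssim\|\eta\|_{\rC^{3,\alpha}}^2.
\]

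\textbf{Step 2 (Linearization of $\mathcal{K}$).} From $H(f)=-\Delta f+R_H(f)$ (Proposition \ref{prop:linH}), the equivalence $f=\mathcal{K}(h)\iff(\sigma H+gI)f=h$ rearranges to
\[
\mathcal{K}(h)-(-\sigma\Delta+gI)^{-1}h=-\sigma(-\sigma\Delta+gI)^{-1}R_H(\mathcal{K}(h)).
\]
Proposition \ref{theo:iso:H} supplies $\|\mathcal{K}(h)\|_{\rC^{3,\alpha}}\lesssim\|h\|_{\rC^{1,\alpha}}$ for small $h$; coupling this with \eqref{est:RH} and the boundedness of $(-\sigma\Delta+gI)^{-1}:\rC^{1,\alpha}\to\rC^{3,\alpha}$ (defined on mean-zero functions when $g=0$) gives $\|\mathcal{K}(h)-(-\sigma\Delta+gI)^{-1}h\|_{\rC^{3,\alpha}}\lesssim\|h\|_{\rC^{1,\alpha}}^2$. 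Setting $h=h'(0)\eta$ and combining with the Lipschitz bound \eqref{continverse:H} for $\mathcal{K}(h(\eta))-\mathcal{K}(h'(0)\eta)$ and with Step 1, I obtain
\[
\|F(\eta,0)-D_\eta F(0,0)\eta\|_{\rC^{3,\alpha}}=O\bigl(\|\eta\|_{\rC^{3,\alpha}}^2\bigr),
\]
which is $o(\|\eta\|_{\rC^{3,\alpha}})$ and proves both Fréchet differentiability at $0$ and the formula \eqref{formula:DF}.

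\textbf{Step 3 (Isomorphism).} The multiplier $T$ has symbol $t(\xi)=\frac{i\xi_1}{(\sigma|\xi|^2+g)m(\xi)}$. Since $m(\xi)\asymp|\xi|$ at infinity, this is of order $-2$, so $T:\rC^{3,\alpha}(\T^d)\to\rC^{5,\alpha}(\T^d)$ is bounded; Arzelà--Ascoli then makes $\gamma T$ compact on $\rC^{3,\alpha}(\T^d)$, and the Fredholm alternative reduces bijectivity of $I-\gamma T$ to injectivity. If $(I-\gamma T)\tilde\eta=0$, then $(1-\gamma t(k))\hat{\tilde\eta}(k)=0$ for every $k\in\Zz^d$. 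For $k\ne 0$ with $k_1\ne 0$ one has $\IM(1-\gamma t(k))=-\frac{\gamma k_1}{(\sigma|k|^2+g)m(k)}\ne 0$; for $k\ne 0$ with $k_1=0$ one has $1-\gamma t(k)=1\ne 0$. Thus $\hat{\tilde\eta}(k)=0$ for every $k\ne 0$, and the mean-zero constraint forces $\tilde\eta\equiv 0$. The main obstacle is Step 1: although $\eta\mapsto (G[\eta])^{-1}$ is only known to be Lipschitz on bounded sets (not $C^1$), the resolvent-type identity above sidesteps this by exploiting the factor of $\eta$ already built into $R[\eta]$ through \eqref{est:RDN}, yielding the $O(\|\eta\|^2)$ remainder without any $C^1$ hypothesis on the inverse map.
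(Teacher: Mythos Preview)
Your proof is correct. Steps~1--2 are essentially the paper's argument: both of you use the linearizations $G[\eta]=m(D)+R[\eta]$ and $H(\eta)=-\Delta\eta+R_H(\eta)$ together with a resolvent identity to peel off the linear part and bound the remainder by $O(\|\eta\|_{\rC^{3,\alpha}}^2)$. The only cosmetic difference is which form of the resolvent identity is used: the paper writes $T(f)=\gamma m^{-1}(D)\p_1 f - m^{-1}(D)R[f]T(f)$, i.e.\ the factor $(G[\eta])^{-1}$ sits on the right, while you place it on the left. Step~3, however, takes a genuinely different route. The paper simply checks that the symbol $a(\xi)=1-i\gamma(\sigma|\xi|^2+g)^{-1}m^{-1}(\xi)\xi_1$ and its reciprocal $a^{-1}$ both satisfy the order-zero multiplier condition \eqref{multiplier}, and then invokes Propositions~\ref{prop:multiplier}--\ref{prop:ZH} to conclude that $D_\eta\cF(0,0)$ is an isomorphism on $\rC^{3,\alpha}$. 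You instead exploit that $T$ is of order $-2$, hence compact on $\rC^{3,\alpha}$, and reduce bijectivity to injectivity via the Fredholm alternative, checking $1-\gamma t(k)\ne 0$ mode by mode. The paper's argument is shorter and gives an explicit inverse; your Fredholm argument is more structural and would adapt more easily to situations where the linearization is not a pure Fourier multiplier.
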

\begin{proof}
Assuming for the moment that  \eqref{formula:DF} holds, then $D_\eta\cF(0, 0)$ is the Fourier multiplier 
\[
a(\xi)=1-i\gamma(\sigma |\xi|^2+g)^{-1}m^{-1}(\xi)\xi_1,
\]
which only vanishes at $\xi=0$. It can be readily checked that  $a$ and $a^{-1}$  satisfy \eqref{multiplier} with $\tau=0$. Then Propositions \ref{prop:multiplier} and \ref{prop:ZH} imply that $D_\eta\cF(0, 0): \rC^{3, \a}(\T^d)\to \rC^{3, \a}(\T^d)$ is an isomorphism. Therefore, it remains to prove \eqref{formula:DF}. Denoting $T(\eta)= (G[\eta])^{-1}(\gamma \p_1\eta): \rC^{3, \a}(\T^d)\to \rC^{3, \a}(\T^d)$, the continuity estimate \eqref{inverseDN} for $(G[\eta])^{-1}$ implies
\bq\label{est:opT}
\| T (f)\|_{\rC^{3, \a}}\le M(\| f\|_{\rC^{3, \a}})\| f\|_{\rC^{3, \a}}.
\eq
The  expansion $G[f]=m(D)+R[f]$ gives
\[
\gamma\p_1f=G[f]T(f)=m(D)T(f)+R[f]T(f).
\]
Since $R[f]$ and $\p_1f$ have mean zero, we can invert $m(D)$ and obtain
\bq\label{form:Tf}
T(f)=\gamma m^{-1}(D)\p_1f-m^{-1}(D)R[f]T(f),
\eq
where $m^{-1}(D)\p_1f$ is linear in $f$. Using \eqref{est:opT} and the remainder estimate \eqref{est:RDN}, we find 
\bq\label{lin:T}
\begin{aligned}
\| m^{-1}(D)R[f]T(f)\|_{\rC^{3, \a}}&\le C\| R[f]T(f)\|_{\rC^{2, \a}}\\
&\le  M(\| f\|_{\rC^{3, \a}})\| f\|_{\rC^{3, \a}}\| T(f)\|_{\rC^{3, \a}} \le M(\| f\|_{\rC^{3, \a}})\| f\|_{\rC^{3, \a}}^2.
\end{aligned}
\eq
Since $T(0)=0$, this proves that  $T$ is Fr\'etchet differentiable at $0$ and $DT(0)=\gamma m^{-1}(D)\p_1$. 

Next we denote $S=(\sigma H+gI)^{-1}$, so that $\cF(\cdot, 0)=ST$.  The linearization \eqref{lin:H} implies
\[
T(f)=(\sigma H+gI)ST(f)=(-\sigma \Delta+gI)ST(f)+R_HST(f).
\]
Inserting \eqref{form:Tf} for $T(f)$ yields
\bq\label{lin:ST}
\begin{aligned}
ST(f)&=(-\sigma \Delta+gI)^{-1}T(f)-(-\sigma \Delta+gI)^{-1}R_HST(f)\\
&=\gamma(-\sigma \Delta+gI)^{-1}m^{-1}(D)\p_1f\\
&\qquad-(-\sigma \Delta+gI)^{-1}\left\{R_HST(f)+m^{-1}(D)R[f]T(f)\right\}.
\end{aligned}
\eq
Since both $R_H$ and $R[f]$ have mean zero, the remainder estimate \eqref{est:RH} implies
\bq\label{lin:ST:1}
\begin{aligned}
&\| (-\sigma \Delta+gI)^{-1}\left\{R_HST(f)+m^{-1}(D)R[f]T(f)\right\}\|_{\rC^{5, \a}}\\
&\le C\| R_HST(f)+m^{-1}(D)R[f]T(f)\|_{\rC^{3, \a}}\\
&\le M(\|ST(f)\|_{C^{5, \a}})\|ST(f)\|_{C^{5, \a}}^2+\|m^{-1}(D)R[f]T(f)\|_{\rC^{3, \a}}.
\end{aligned}
\eq
We deduce from \eqref{est:opT} and the Lipschitz continuity  \eqref{continverse:H} for $S$ that
\[
\|ST(f)\|_{C^{5, \a}}\le M(\| T(f)\|_{C^{3, \a}})\| T(f)\|_{C^{3, \a}}\le M(\|f\|_{\rC^{3, \a}})\| f\|_{\rC^{3, \a}}.
\]
Combining this with \eqref{lin:T} and \eqref{lin:ST:1} yields 
\bq\label{lin:ST:2}
\begin{aligned}
\| (-\sigma \Delta+gI)^{-1}\left\{R_HST(f)+m^{-1}(D)R[f]T(f)\right\}\|_{\rC^{5, \a}}\le M(\|f\|_{\rC^{3, \a}})\| f\|_{\rC^{3, \a}}^2.
\end{aligned}
\eq
In view of \eqref{lin:ST} and \eqref{lin:ST:2} we conclude that  $D(ST)(0)=\gamma(-\sigma \Delta+gI)^{-1}m^{-1}(D)\p_1$ which in turn yields \eqref{formula:DF}.
\end{proof}
We shall first prove part II of Theorem \ref{theo:main} for the finite depth case, then describe necessary modifications for the infinite depth case. 
\subsubsection{Finite depth}
Applying Theorem \ref{GIFT}, 
 one of the three alternatives (i), (ii) and (iii) is valid. If (iii) holds, i.e. $ \emptyset \ne\mathcal{C}\cap \p U=\mathcal{C}\cap (\p V\times \Rr)$, then there is a sequence of traveling solutions $\eta_n$ touching the bottom as $n\to \infty$. 
 
 Assume from now on that (iii) is false, so $\mathcal{C}\subset V\times \Rr$ and there exists $c^0>0$ such that 
 \bq\label{ulowerb}
 \inf_{x\in \T^d}(\eta(x)+b)\ge c^0 \quad\forall (\eta, \ka)\in  \mathcal{C}.
 \eq
\eqref{ulowerb} allows us to apply the results for the Dirichlet-to-Neumann operator established in Section \ref{section:DN}. 

  Assume also for the sake of contradiction that (ii) holds, i.e. $ \mathcal{C}\setminus\{(0, 0)\}$ is connected. By virtue of Lemma \ref{lemm:rigidity}, $V\times \{0\}$ and $ \mathcal{C}\setminus\{(0, 0)\}$ are disjoint, so
 \[
 \mathcal{C}\setminus\{(0, 0)\}\subset V\times (\Rr\setminus\{0\})=(V\times (-\infty, 0))\cup (V\times (0, \infty)).
\]
  Since each set on the right-hand side is open in $X\times \Rr$, $ \mathcal{C}\setminus\{(0, 0)\}$ is disconnect, a contradiction. Thus (ii) is false and (i) must hold, i.e. $\mathcal{C}$ is unbounded. There are two possibilities. 

{\bf Case 1:} $\ka$ is bounded in $\mathcal{C}$, i.e. $\sup_{\ka \in \Pi_2\mathcal{C}}|\ka|<\infty$. 
 Then $\eta$ must be unbounded in $\mathcal{C}$, that is, 
\bq\label{unb:0}
\sup_{\eta \in \Pi_1\mathcal{C}} \| \eta\|_{C^{3, \a}(\T^d)}=\infty.
\eq
 In fact, we have
 \begin{lemm}
 For any $\beta \in (0, \a)$, it holds that
  \bq\label{unb:1}
\sup_{\eta \in \Pi_1\mathcal{C}} \| \eta\|_{C^{1, \beta}(\T^d)}=\infty.
\eq
\end{lemm}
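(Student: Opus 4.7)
I would argue by contradiction. Suppose that for some $\beta \in (0,\alpha)$ one had $M_0 := \sup_{\eta \in \Pi_1 \mathcal{C}} \|\eta\|_{C^{1,\beta}(\mathbb{T}^d)} < \infty$. Together with the Case~1 hypothesis $M_1 := \sup_{\kappa \in \Pi_2 \mathcal{C}} |\kappa| < \infty$ and the uniform separation \eqref{ulowerb} from the bottom (in force because alternative~(iii) has been ruled out), the plan is to bootstrap through the traveling wave equation in two stages to obtain a uniform $\rC^{3,\alpha}$ bound along $\mathcal{C}$, directly contradicting \eqref{unb:0}. The starting point is that every $(\eta,\kappa) \in \mathcal{C}$ satisfies $\cF(\eta,\kappa)=0$, which, after inverting $G[\eta]$ (Proposition~\ref{theo:iso:G}) and then $\sigma H + gI$ (Proposition~\ref{theo:iso:H}), is equivalent to the fixed-point identity
\[
\eta = (\sigma H + gI)^{-1}\bigl(\gamma (G[\eta])^{-1}\partial_1 \eta - \kappa \varphi\bigr).
\]
Since $\partial_1\eta$ has mean zero, $(G[\eta])^{-1}$ preserves the mean-zero class, and $\varphi$ is taken mean-zero by Remark~1, the right-hand side lies in the mean-zero class, so part~(ii) of Proposition~\ref{theo:iso:H} applies even when $g=0$.

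\textbf{First bootstrap ($C^{1,\beta} \to C^{3,\beta}$).} Applying Proposition~\ref{theo:iso:G} with $k=1$ and H\"older exponent $\beta$ in place of $\alpha$, and using that \eqref{ulowerb} makes the constant $M(\cdot)$ uniform over $\mathcal{C}$, one obtains $\|(G[\eta])^{-1}\partial_1\eta\|_{\rC^{1,\beta}} \leq M(M_0)\cdot M_0$. Combined with $\|\kappa\varphi\|_{\rC^{1,\beta}} \leq M_1\|\varphi\|_{\rC^{1,\alpha}}$, the right-hand side of the identity is uniformly bounded in $\rC^{1,\beta}$ along $\mathcal{C}$. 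Proposition~\ref{theo:iso:H} with $k=3$ and exponent $\beta$ then yields a uniform bound $\|\eta\|_{\rC^{3,\beta}} \leq C_1$ for every $\eta \in \Pi_1\mathcal{C}$.

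\textbf{Second bootstrap ($C^{3,\beta} \to C^{3,\alpha}$).} On $\mathbb{T}^d$ the embedding $C^{3,\beta} \hookrightarrow C^{2,\alpha}$ is continuous (three $\beta$-H\"older derivatives force second derivatives to be Lipschitz, hence $\alpha$-H\"older for any $\alpha \in (0,1)$), so one gets $\|\eta\|_{C^{2,\alpha}} \leq C_2$ uniformly. Now reapply Proposition~\ref{theo:iso:G} with $k=2$ and the original exponent $\alpha$ to obtain $\|(G[\eta])^{-1}\partial_1\eta\|_{\rC^{2,\alpha}} \leq M(C_2)\cdot C_2$, so the right-hand side of the identity is uniformly bounded in $\rC^{1,\alpha}$ along $\mathcal{C}$ (noting $\kappa\varphi \in \rC^{1,\alpha}$ uniformly). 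A final application of Proposition~\ref{theo:iso:H} with $k=3$ and exponent $\alpha$ delivers a uniform bound $\|\eta\|_{\rC^{3,\alpha}} \leq C_3$ on $\mathcal{C}$, in direct contradiction with \eqref{unb:0}.

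\textbf{Main obstacle.} There is no serious analytic difficulty beyond bookkeeping: the argument is a two-step bootstrap fueled by the one-derivative smoothing of $(G[\eta])^{-1}$ and the two-derivative smoothing of $(\sigma H + gI)^{-1}$, with the $C^{3,\beta} \hookrightarrow C^{2,\alpha}$ embedding supplying the missing H\"older room in the second stage. The only subtle point worth emphasizing is that the constants $M(\cdot)$ in Propositions~\ref{theo:iso:G} and \ref{theo:iso:H} must be uniform over $\mathcal{C}$; this is precisely what \eqref{ulowerb} guarantees, and it is why the argument must be carried out under the standing assumption that alternative~(iii) fails.
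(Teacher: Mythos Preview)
Your proof is correct and follows essentially the same strategy as the paper: both exploit that the fixed-point identity $\eta=-F(\eta,\ka)$ gains three derivatives (one from $(G[\eta])^{-1}$, two from $(\sigma H+gI)^{-1}$), and iterate this once at exponent $\beta$ and once at exponent $\a$, with an embedding to bridge the gap. The paper packages both operator inverses into the single estimate \eqref{estF:Ck} and argues contrapositively in the forward direction (unbounded $C^{3,\a}\Rightarrow$ unbounded $C^{1,\a}\Rightarrow$ unbounded $C^{3,\beta}\Rightarrow$ unbounded $C^{1,\beta}$), whereas you run the contradiction bootstrap directly (bounded $C^{1,\beta}\Rightarrow$ bounded $C^{3,\beta}\Rightarrow$ bounded $C^{2,\a}\Rightarrow$ bounded $C^{3,\a}$); these are logically equivalent arrangements of the same ingredients.
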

\begin{proof}
By virtue of \eqref{ulowerb}, the estimate \eqref{inverseDN} holds.  Combining \eqref{inverseDN} and \eqref{continverse:H} yields
\bq\label{estF:Ck}
\begin{aligned}
\| F(\eta, \ka)\|_{C^{3, \a}}&\le  M_1\left(\| \gamma (G[\eta])^{-1}\p_1\eta \|_{C^{1, \a}}+\| \ka \varphi \|_{C^{1, \a}}\right)\left(\| \gamma G[\eta])^{-1}\p_1\eta\|_{C^{1, \a}}+\|\ka \varphi\|_{C^{1, \a}}\right)\\
&\le M_2\left(\|\eta \|_{C^{1, \a}}+|\ka|\|\varphi \|_{C^{1, \a}}\right)\left(\| \eta\|_{C^{1, \a}}+ |\ka|\| \varphi\|_{C^{1, \a}}\right),
\end{aligned}
\eq
where $M_1$ depends only on $(\sigma, g, d, c^0, \beta)$, and $M_2$ depends only on $(\sigma, g, d, c^0, \beta, |\gamma|)$. Since $\eta=-F(\eta, \ka)$ in $\mathcal{C}$, it  follows from \eqref{unb:0},  \eqref{estF:Ck}, and the boundedness of $\ka$ in $\mathcal{C}$ that 
\bq\label{unb:C1a}
\sup_{\eta \in \Pi_1\mathcal{C}} \| \eta\|_{C^{1, \a}(\T^d)}=\infty.
\eq
Now let  $\beta \in (0, \a)$.  We apply \eqref{estF:Ck} with $\a$ replaced by $\beta$ to have 
\bq\label{estF:C3}
\| F(\eta, \ka)\|_{C^{3, \beta}}\le  M_2\left(\|\eta \|_{C^{1, \beta}}+|\ka|\| \varphi\|_{C^{1, \beta}}\right)\left(\| \eta\|_{C^{1, \beta}}+|\ka|| \varphi\|_{C^{1, \beta}}\right).
\eq
But $\| F(\eta, \ka)\|_{C^{3, \beta}}=\|\eta\|_{C^{3, \beta}}$ is unbounded in $\mathcal{C}$ owing to \eqref{unb:C1a}. Therefore, \eqref{unb:1} follows from \eqref{estF:C3} and the boundedness of $\ka$ in $\mathcal{C}$.
\end{proof}
Next, we prove that for dimension $d=1, 2$, \eqref{unb:1} can be further strengthened to the blowup of the maximum gradient.
\begin{lemm}\label{lemm:unbC1}
For $d=1, 2$ we have
\bq\label{unb:C1}
\sup_{\eta \in \Pi_1\mathcal{C}} \| \eta\|_{C^1(\T^d)}=\infty.
\eq
\end{lemm}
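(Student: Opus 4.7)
The plan is to argue by contradiction. Suppose
\[
K := \sup_{\eta \in \Pi_1\mathcal{C}} \|\eta\|_{C^1(\T^d)} < \infty.
\]
Together with \eqref{ulowerb} and the standing hypothesis of Case 1 that $\kappa$ is bounded on $\mathcal{C}$, this means that the family of fluid domains $\{\Omega_\eta\}_{(\eta,\kappa)\in\mathcal{C}}$ is uniformly Lipschitz, and that the datum $\partial_1\eta$ is uniformly bounded in $L^\infty(\Sigma_\eta)$. Applying $\sigma H+gI$ to the identity $\eta + F(\eta,\kappa) = 0$ yields
\begin{equation}
\sigma H(\eta) + g\eta = \gamma h - \kappa\varphi,\qquad h := (G[\eta])^{-1}\partial_1\eta,
\end{equation}
where $h = q|_{\Sigma_\eta}$ for the harmonic function $q$ in $\Omega_\eta$ with Neumann data $\partial_N q|_{\Sigma_\eta} = \partial_1\eta$ (and $\partial_y q|_{\Sigma_{-b}} = 0$ in the finite depth case). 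The strategy is to show that the right-hand side is uniformly bounded in $\rC^{0,\beta}(\T^d)$ for some small $\beta \in (0,\alpha)$; then Proposition~\ref{theo:iso:H} upgrades this to a uniform bound on $\|\eta\|_{C^{2,\beta}}$, contradicting \eqref{unb:1}.

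The crux is an estimate for $h$. Since $\partial_1\eta$ is uniformly bounded in $L^\infty(\Sigma_\eta)$ and the surface measure of $\Sigma_\eta$ is uniformly controlled, $\partial_1\eta$ is uniformly bounded in $L^p(\Sigma_\eta)$ for every $p<\infty$. I then invoke the Dahlberg-Kenig solvability of the $L^p$-Neumann problem in Lipschitz domains \cite{DahlbergKenig}: for some exponent $p_0>d$ lying in the uniform solvability range (which depends only on the Lipschitz character $K$ and hence is uniform on $\mathcal{C}$),
\begin{equation}
\| \widetilde N(\nabla q)\|_{L^{p_0}(\Sigma_\eta)} \le C \|\partial_1\eta\|_{L^{p_0}(\Sigma_\eta)},
\end{equation}
with $C = C(K,b,c^0,p_0,d)$ and $\widetilde N$ the nontangential maximal operator. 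Standard theory of boundary traces in Lipschitz domains converts the nontangential control of $\nabla q$ into tangential differentiability of $h=q|_{\Sigma_\eta}$, giving $h\in W^{1,p_0}(\T^d)$ with the same uniform $L^{p_0}$-bound. Because $p_0>d$, the Sobolev embedding $W^{1,p_0}(\T^d)\hookrightarrow C^{0,\beta}(\T^d)$ with $\beta = 1 - d/p_0 > 0$ yields a uniform bound on $\|h\|_{C^{0,\beta}}$; moreover $h$ has mean zero since $\partial_1\eta$ does and $G[\eta]$ preserves mean zero by Proposition~\ref{DN:positive}.

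Since $\varphi\in \rC^{1,\alpha}(\T^d)\hookrightarrow \rC^{0,\beta}(\T^d)$ after arranging $\beta<\alpha$, the right-hand side $\gamma h-\kappa\varphi$ is uniformly bounded in $\rC^{0,\beta}(\T^d)$. Proposition~\ref{theo:iso:H} (using part (i) when $g>0$ and part (ii) when $g=0$, in which case mean-zero is preserved) then gives a uniform bound on $\|\eta\|_{C^{2,\beta}(\T^d)}$, hence on $\|\eta\|_{C^{1,\beta}(\T^d)}$, contradicting \eqref{unb:1}. The principal obstacle is the delicate interplay between the admissible $L^p$-range of the Dahlberg-Kenig Neumann estimates (which depends on the Lipschitz character and barely exceeds $p=2$ in $\mathbb{R}^3$) and the Sobolev threshold $p>d$ needed for Hölder embedding on $\T^d$; it is precisely this balance that confines the argument to the physical dimensions $d=1,2$.
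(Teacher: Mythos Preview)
Your argument is correct and follows essentially the same route as the paper: assume a uniform $C^1$ bound, invoke Dahlberg--Kenig $L^p$ Neumann solvability (packaged in the paper as Theorem~\ref{theo:DN:Lp}) to get $(G[\eta])^{-1}\partial_1\eta$ uniformly in $W^{1,p}$ for some $p>2\ge d$, embed into $C^{0,\beta}$ by Morrey, and apply Proposition~\ref{theo:iso:H} to obtain a uniform $C^{2,\beta}$ bound on $\eta$, contradicting \eqref{unb:1}. The only cosmetic difference is that you spell out the nontangential maximal function estimate and the passage to tangential regularity of the trace, whereas the paper quotes these as the content of Theorem~\ref{theo:DN:Lp}.
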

The proof of Lemma \ref{lemm:unbC1} relies on the following sharp invertibility  of the Dirichlet-to-Neumann operator in Lipschitz domains. 
\begin{theo}\label{theo:DN:Lp}
Let  $\eta \in W^{1, \infty}(\T^d)$ with $\| \eta\|_{W^{1, \infty}(\T^d)}\le m<\infty$ and $d\ge 1$.  There exists $\eps_*$ depending only on $(m, d, b, c^0)$ such that the following holds. For any $p\in (1 , 2+\eps^*)$, $G[\eta]: \mathring{W}^{1, p}(\T^d)\to \mathring{L}^p(\T^d)$ is invertible and there exists $C>0$ depending only on $(m, d, b, p, c^0)$ such that 
\bq
\|(G[\eta])^{-1}\|_{L^p(\T^d)\to W^{1, p}(\T^d)}\le C.
\eq
\end{theo}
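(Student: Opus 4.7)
The plan is to reduce the invertibility of $G[\eta]$ on the $L^p$ scale to the sharp $L^p$ solvability of the Neumann problem for the Laplacian in the Lipschitz domain $\Omega_\eta$, then quote the Dahlberg--Kenig theorem on Neumann problems in Lipschitz domains. Recall from Proposition \ref{DN:positive} that $f \mapsto q|_{\Sigma_\eta}$ inverts $G[\eta]$, where $q$ solves the Neumann problem with data $h$ on $\Sigma_\eta$ (and the no-flux condition on $\Sigma_{-b}$ in the finite depth case). Thus it suffices to show that, for $h \in \mathring{L}^p(\T^d)$ with $p$ close to $2$, this Neumann problem admits a solution whose trace lies in $\mathring{W}^{1, p}(\T^d)$ with norm bounded by $C \| h\|_{L^p}$.

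The first step I would carry out is to eliminate the bottom boundary in the finite depth case by even reflection across $\{y = -b\}$: set $\tilde q(x, y) = q(x, -2b - y)$ for $y < -b$; the no-flux condition guarantees that the extension is harmonic across the reflected interface and yields a harmonic function in the periodic graph-Lipschitz slab $\Omega_\eta^{\mathrm{sym}} = \{-2b - \eta(x) < y < \eta(x)\}$. Its Lipschitz character is controlled uniformly by $\| \eta \|_{W^{1, \infty}} \le m$ and the separation $c^0$. Unfolding the torus to the universal cover produces a graph-Lipschitz domain in $\mathbb{R}^{d+1}$ with the same local Lipschitz character, and I would then invoke \cite{DahlbergKenig} (or its extension to graph-Lipschitz domains via layer potentials) to obtain $\eps_* > 0$, depending only on $(m, d, b, c^0)$, such that for every $p \in (1, 2 + \eps_*)$ the Neumann problem has a solution with nontangential maximal estimate $\| (\nabla q)^*\|_{L^p(\Sigma_\eta)} \le C \| h\|_{L^p(\Sigma_\eta)}$, and in particular $\| \nabla_{\mathrm{tan}} (q|_{\Sigma_\eta}) \|_{L^p(\T^d)} \le C \| h\|_{L^p(\T^d)}$. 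Setting $f = q|_{\Sigma_\eta}$ (normalized to mean zero, which is possible since $h$ is mean zero, consistent with the Neumann compatibility condition) gives a right inverse of $G[\eta]$ with the claimed norm bound. Injectivity on $\mathring{W}^{1, p}(\T^d)$ follows from the Sobolev embedding $\mathring{W}^{1,p}(\T^d) \hookrightarrow \mathring{H}^{1/2}(\T^d)$ valid for $p$ near $2$ and $d \le 2$, combined with the $L^2$ positivity of $G[\eta]$ in Proposition \ref{DN:positive}.

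The main obstacle I anticipate is twofold. The first is geometric: the Dahlberg--Kenig theorem is classically stated on bounded Lipschitz domains, whereas our $\Omega_\eta$ (after unfolding) is an unbounded periodic graph-Lipschitz slab. The resolution is to use the layer-potential form of Dahlberg--Kenig, whose quantitative estimates depend only on the local Lipschitz character of the boundary and hence transfer to this periodic setting either by working intrinsically on the torus or by periodic localization; the constant $\eps_*$ is extracted from the $L^2$-to-$L^{2+\eps}$ self-improvement of the Rellich-type identity in \cite{DahlbergKenig}, whose input data are precisely $(m, d, b, c^0)$. The second obstacle is the infinite depth case, where reflection is not available; there one approximates by truncation at depth $-n$ with an artificial no-flux bottom, applies the above finite-depth estimate with constants independent of $n$ (the Lipschitz character of $\Sigma_\eta$ is unchanged), and passes to the limit using the $\dot H^1$ solvability already recorded after Definition \ref{defi:DN}.
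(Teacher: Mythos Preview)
Your approach is essentially the same as the paper's: the paper's entire proof is the sentence ``Theorem~\ref{theo:DN:Lp} is a direct consequence of Theorems 3.8 and 2.13 in \cite{DahlbergKenig} regarding the optimal solvability of the Dirichlet and Neumann problem in Lipschitz domains with $W^{1,p}$ and $L^p$ boundary condition, respectively.'' You supply considerably more of the bridge (even reflection to remove the bottom, periodic localization to reduce to the graph-Lipschitz setting, truncation for infinite depth), which is reasonable and correct in spirit.

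Two small remarks. First, your injectivity step via the embedding $\mathring W^{1,p}(\T^d)\hookrightarrow \mathring H^{1/2}(\T^d)$ is not actually restricted to $d\le 2$: the Sobolev embedding $W^{1,p}\hookrightarrow H^{1/2}$ holds whenever $p\ge 2d/(d+1)$, hence for all $d\ge 1$ once $p$ is close enough to $2$; alternatively, injectivity follows directly from the uniqueness assertions in the Dahlberg--Kenig Dirichlet and Neumann theorems themselves, which is presumably why the paper cites both. Second, the paper invokes both the Dirichlet and Neumann results from \cite{DahlbergKenig}, whereas you only use the Neumann side; the Dirichlet result gives the forward boundedness $G[\eta]:\mathring W^{1,p}\to \mathring L^p$ (and uniqueness), which together with your Neumann-based surjectivity closes the isomorphism claim cleanly without the separate embedding argument.
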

Theorem \ref{theo:DN:Lp} is a direct consequence of Theorems 3.8 and 2.13 and in \cite{DahlbergKenig} regarding the optimal solvability of the Dirichlet and Neumann problem in Lipschitz domains with $W^{1, p}$ and $L^p$ boundary condition, respectively. 

{\it Proof of Lemma \ref{lemm:unbC1}}

Assume for contradiction that \eqref{unb:C1} is false, i.e.  $m:=\sup_{\eta \in \Pi_1\mathcal{C}} \| \eta\|_{C^1} <\infty$. By virtue of Theorem \ref{theo:DN:Lp}, there exist $p=p(m, d, b, c^0)>2$ and  $C=C(m, d, b, p, c^0)>0$ such that 
\bq
\| (G[\eta])^{-1}\p_1\eta\|_{W^{1, p}}\le C\| \p_1\eta\|_{L^p}\le C\| \eta\|_{W^{1, p}}.
\eq
For $d=1, 2$, Morrey's inequality implies $W^{1, p}(\T^d)\subset C^{0, \mu}(\T^d)$ for some $\mu=\mu(d, p)>0$, and hence 
\bq\label{sharpest:DN}
\| (G[\eta])^{-1}\p_1\eta\|_{C^{0, \mu}}\le C\| \eta\|_{W^{1, p}},\quad C=C(m, d, b, c^0).
\eq
 Consequently, we can apply \eqref{continverse:H} and \eqref{sharpest:DN} to have 
\[
\begin{aligned}
\| F(\eta, \ka)\|_{C^{2, \mu}}&\le M_1\left(\| \gamma (G[\eta])^{-1}\p_1\eta\|_{C^{0, \mu}}+|\ka|\| \varphi\|_{C^{0, \mu}} \right)\left(\| \gamma (G[\eta])^{-1}\p_1\eta\|_{C^{0, \mu}}+|\ka|\| \varphi\|_{C^{0, \mu}}\right)\\
&\le M_2(\| \eta\|_{W^{1, p}}+|\ka|\| \varphi\|_{C^{0, \mu}})(\| \eta\|_{W^{1, p}}+|\ka|\| \varphi\|_{C^{0, \mu}})\\
&\le M_3(\| \eta\|_{C^1}+|\ka|\| \varphi\|_{C^{0, \mu}})(\| \eta\|_{C^1}+|\ka|\| \varphi\|_{C^{0, \mu}}),
\end{aligned}
\]
where $M_1$ depends only on $(\sigma, g, d, b, c^0)$, and $M_2$ and $M_3$ depend only on $(m, \sigma, g, d, b,  c^0, |\gamma|)$. Combining this with  the boundedness of $\ka$ and $\| \eta\|_{C^1}$ in $\mathcal{C}$, we deduce that $\| F(\eta, \ka)\|_{C^{2, \mu}}$ is bounded for $(\eta, \ka)\in \mathcal{C}$. This contradicts the fact that  $F(\eta, \ka)=-\eta$ and $\| \eta\|_{C^{2, \mu}}$ is unbounded in $\mathcal{C}$ by virtue of \eqref{unb:1}. $\square$

{\bf Case 2:} $\ka$ is unbounded in $\mathcal{C}$.  We shall prove that $\| \eta\|_{C^1(\T^d)}$ is  unbounded in $\mathcal{C}$ for any $d\ge 1$. 
To obtain this we first note that  if $h\in \rC^{0, \beta}(\T^d)$, then  $f:=(g H+g I)^{-1}h\in \rC^{2, \beta}(\T^d)$ and
\bq\label{est:inverseH:Sobolev}
 \max\{\sigma, g\} \| f\|_{H^1}\ge \| h\|_{H^{-1}}
\eq
since  
 \[
\| h\|_{H^{-1}}=\| \sigma H(f)+gf\|_{H^{-1}}\le \sigma \left\| \frac{\na f}{\sqrt{1+|\na f|^2}}\right\|_{L^2}+g\| f\|_{H^{-1}}\le \sigma \| \na f\|_{L^2}+g\| f\|_{L^2}.
\]
 Using \eqref{est:inverseH:Sobolev} and the definition \eqref{def:F} of $F(\eta, \ka)$, we deduce 
\begin{align*}
\max\{\sigma, g\}\| \eta\|_{H^1}&=\max\{\sigma, g\}\| F(\eta, \ka)\|_{H^1}\\
&\ge \|-\gamma(G[\eta])^{-1} \p_1\eta+\ka \varphi \|_{H^{-1}}\\
&\ge |\ka|\| \varphi \|_{H^{-1}}-|\gamma|\|(G[\eta])^{-1} \p_1\eta\|_{H^{-1}}.
\end{align*}
On the other hand, it follows from \eqref{inverseDN:low} that 
\[
\|(G[\eta])^{-1} \p_1\eta\|_{H^{-1}}\le \|(G[\eta])^{-1} \p_1\eta\|_{H^{\mez}}\le M_1(\|\eta\|_{W^{1, \infty}})\| \eta\|_{H^{\mez}},
\]
where $M_1$ depends only on  $(d, b, c^0)$.  Consequently 
\[
|\ka|\| \varphi \|_{H^{-1}}\le \max\{\sigma, g\}\| \eta\|_{H^1}+|\gamma|M_1(\|\eta\|_{W^{1, \infty}})\| \eta\|_{H^{\mez}}\le M_2(\|\eta\|_{C^1})\| \eta\|_{C^1},
\]
where $M_2$ depends only on  $(d, b, c^0, \sigma, g, |\gamma|)$.  Since  $\ka$ in unbounded in $\mathcal{C}$, this implies that $\|\eta\|_{C^1}$ must be unboundedness in $\mathcal{C}$.

We have proven that in both Case 1 and Case 2,  the connected set $\mathcal{C}$ contains traveling waves that are  unboundedly large in $C^1$ for $d=1, 2$ and in $C^{1, \beta}$ for  $d\ge 3$. 

\subsubsection{Infinite depth} In this case we apply Theorem \ref{GIFT} with $U=X\times \Rr$ and there are only two alternatives, (i) and (ii). Lemma \ref{lemm:rigidity} again allows us to exclude (ii), so that (i) holds, i.e. $\mathcal{C}$ is unbounded. The rest of the proof follows  as in the finite depth case.

The proof of Theorem \ref{theo:main} is complete.

\appendix
\section{Fourier multipliers in H\"older spaces}\label{appendix}
\begin{defi}
Let $\chi:\Rr^d\to \Rr$ be a $C^\infty$ function satisfying $\chi(\xi)=1$ for $|\xi|\le 1/2$ and $\chi(\xi)=0$ for $|\xi|\ge 1$. Set $\tt(\xi)=\chi(\xi/2)-\chi(\xi)$ and
\[
\tt_j(\xi)=\tt\left(\frac{\xi}{2^j}\right),\quad j\ge 0,
\]
 so that 
 \[
 \chi(\xi)+\sum_{j=0}^\infty \tt_j(\xi)=1\quad\forall \xi \in \Rr^d.
 \]
Note that $\tt_j$ is supported in the annulus $\{2^{j-1}<|\xi|<2^{j+1}\}$. We then define the Fourier multipliers 
\[
\Delta_{-1}(D)=\chi(D),\quad \Delta_j(D)=\tt_j(D)~\text{for } j\ge 0,
\]
and obtain the Littlewood-Payley decomposition of identity
\[
I=\sum_{j=-1}^\infty \Delta_j.
\]
 \end{defi}
 \begin{defi}
For $s\in \Rr$, the Zygmund space $C^s_*(\T^d)$ is the space of distributions on $\T^d$ such that the norm
\bq
\| u\|_{C^s_*(\T^d)}=\sup_{j\ge -1}2^{sj}\| \Delta_j u\|_{L^\infty(\T^d)}
\eq
is finite. The space of $C^s_*$ distributions with mean zero is denoted by 
\bq
\rC^s_*(\T^d)=\left\{u\in C^s_*(\T^d): \langle u, 1\rangle=0\right\}\equiv \left\{u\in C^s_*(\T^d): \Delta_{-1}u=0\right\}.
\eq
\end{defi}
\begin{prop}\label{prop:multiplier}
Let $a:\Rr^d\setminus \{0\}\to \Cc$ be a smooth function such that for some $\tau\in \Rr$, 
\bq\label{multiplier}
\forall \beta \in \mathbb{N}^d,~\exists C_\beta>0,~\forall |\xi|> \mez,~|\p^\beta a(\xi)|\le C_\beta |\xi|^{\tau-|\beta|}.
\eq
Then the Fourier multiplier $a(D)$ is continuous from $\rC^s_*(\T^d)$ to  $\rC^{s-\tau}_*(\T^d)$ for all $s\in \Rr$. 
\end{prop}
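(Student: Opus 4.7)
The plan is to use the Littlewood--Paley decomposition to reduce the claim to a uniform single-block estimate, which I would then obtain by a kernel rescaling argument on $\Rr^d$.

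First I would note that $a(D)$ preserves the mean-zero condition: since $\hat u(0)=0$ for $u\in \rC^s_*(\T^d)$ and the multiplier acts by $a(k)\hat u(k)$ for $k\in\Zz^d\setminus\{0\}$ (with the $k=0$ mode set to $0$, consistent with $a$ being undefined at $0$), we have $\Delta_{-1}(a(D)u)=0$. Thus it remains to estimate $\|\Delta_k(a(D)u)\|_{L^\infty(\T^d)}$ for $k\ge 0$. Because $\supp \tt_k$ and $\supp \tt_j$ overlap only when $|k-j|\le 1$, we obtain
\[
\Delta_k(a(D)u)=\sum_{\substack{|j-k|\le 1\\ j\ge 0}}\Delta_k\,a(D)\,\Delta_j u,
\]
and the task reduces to proving the single-block estimate
\[
\|a(D)\Delta_j u\|_{L^\infty(\T^d)}\le C\,2^{\tau j}\,\|\Delta_j u\|_{L^\infty(\T^d)}\quad\text{uniformly for }j\ge 0.
\]

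To prove this, I would fix a smooth auxiliary cutoff $\wt\tt$ supported in a fixed compact subset of $\Rr^d\setminus\{0\}$ containing $\supp \tt$ on which $\wt\tt\equiv 1$, and set $\wt\tt_j(\xi)=\wt\tt(\xi/2^j)$ so that $a(D)\Delta_j=(a\wt\tt_j)(D)\Delta_j$. After the rescaling $\xi=2^j\eta$, the symbol becomes $a_j(\eta):=a(2^j\eta)\wt\tt(\eta)$, supported in the same fixed annulus away from the origin. For $j\ge 1$ the hypothesis \eqref{multiplier} applies directly there, and for $j=0$ one uses the smoothness of $a$ on any compact subset of $\Rr^d\setminus\{0\}$; in both cases one obtains the uniform derivative bounds $|\p^\beta a_j(\eta)|\le C_\beta\,2^{\tau j}$ for every $\beta\in\Nn^d$. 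Repeated integration by parts then yields $\|\cF^{-1}a_j\|_{L^1(\Rr^d)}\le C\,2^{\tau j}$ uniformly in $j$. The $\T^d$-kernel of $(a\wt\tt_j)(D)$ is the periodization of $2^{jd}(\cF^{-1}a_j)(2^j\cdot)$, so its $L^1(\T^d)$ norm is bounded by $\|\cF^{-1}a_j\|_{L^1(\Rr^d)}\le C\,2^{\tau j}$, and the single-block estimate follows by Young's inequality on $\T^d$.

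Finally, combining this with the pointwise bound $\|\Delta_j u\|_{L^\infty}\le 2^{-sj}\|u\|_{C^s_*}$ gives, for $k\ge 0$,
\[
\|\Delta_k(a(D)u)\|_{L^\infty}\le C\sum_{|j-k|\le 1}2^{(\tau-s)j}\|u\|_{C^s_*}\le C'\,2^{-(s-\tau)k}\|u\|_{C^s_*},
\]
which together with $\Delta_{-1}(a(D)u)=0$ is precisely the $\rC^{s-\tau}_*(\T^d)$ bound. The main technical step is the $L^1$ kernel estimate after dyadic rescaling and periodization; this is a routine Schwartz-space argument given the uniform symbol bounds, and I do not anticipate any conceptual obstacle beyond it.
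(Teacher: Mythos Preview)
Your proposal is correct and follows essentially the same approach as the paper: the paper reduces the claim to the single-block estimate $\|\Delta_j a(D)u\|_{L^\infty(\T^d)}\le C\,2^{\tau j}\|\Delta_j u\|_{L^\infty(\T^d)}$, cites the standard $\Rr^d$ version in Bahouri--Chemin--Danchin, and invokes the Poisson summation formula to transfer it to $\T^d$. You have simply written out in full the kernel-rescaling and periodization argument that the paper leaves to that reference.
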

\begin{proof}
This follows from the estimate 
\bq\label{est:Deltaj}
\| \Delta_ja(D)u\|_{L^\infty(\T^d)}\le C2^{sj}\| \Delta_j u\|_{L^\infty(\T^d)},\quad j\ge 0.
\eq
The proof of the $\Rr^d$ version of \eqref{est:Deltaj} can be found in \cite[Lemma 2.2]{BCD}. For $\T^d$ \eqref{est:Deltaj} can be proven similarly upon using the Poisson summation formula
\bq\label{Poissonsum}
\cF^{-1}(f)(x)=\sum_{k\in \Zz^d}\cF_{\Rr^d}^{-1}(f)(x+2\pi k)
\eq
for any Schwartz function $f:\Rr^d\to \Cc$. Here, we have denoted 
\[
\cF^{-1}(f)(x)=(2\pi)^{-d}\sum_{k\in \Zz^d}f(k)e^{ik\cdot x},\quad\cF^{-1}_{\Rr^d}(f)(x)=(2\pi)^{-d}\int_{\Rr^d}f(\xi)e^{i\xi\cdot x}d\xi.
\]
\end{proof}
Finally, we have following equivalence between Zygmumd and H\"older spaces.
\begin{prop}\label{prop:ZH}
For $k\in \Nn$ and $\a\in (0, 1)$, the spaces $\rC^{k, \a}(\T^d)$ and $\rC^{k+\a}_*(\T^d)$ are equal with equivalent norms. 
\end{prop}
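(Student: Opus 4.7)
The plan is to establish the two continuous inclusions $\rC^{k, \a}(\T^d)\hookrightarrow\rC^{k+\a}_*(\T^d)$ and $\rC^{k+\a}_*(\T^d)\hookrightarrow\rC^{k, \a}(\T^d)$ with constants depending only on $(d, k, \a)$. The corresponding equivalence on $\Rr^d$ is a classical Littlewood--Paley characterization of H\"older spaces (see, e.g., Theorem 2.36 in \cite{BCD}), so the substance of the work is to transfer that argument to $\T^d$. The crucial observation is that the Poisson summation formula \eqref{Poissonsum} identifies the convolution kernel of each $\Delta_j$ on $\T^d$ with the periodization of its Euclidean analogue $\cF_{\Rr^d}^{-1}\tt_j$. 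Consequently, the periodized kernel inherits uniform-in-$j$ control on its $L^1$ norm, the scaling $\int|K_j(y)||y|^\beta dy \lesssim 2^{-\beta j}$ for $\beta\in(0,1)$, and Bernstein-type estimates, which are exactly the ingredients needed to run the Euclidean proof.

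For the forward inclusion $\rC^{k, \a}(\T^d)\hookrightarrow\rC^{k+\a}_*(\T^d)$, I would exploit the fact that $\tt_j$ vanishes near the origin for $j\ge 0$, so the kernel $K_j$ of $\Delta_j$ annihilates constants. For any multi-index $\beta$ with $|\beta|=k$ we may write
\[
\Delta_j\p^\beta u(x)=\int K_j(y)\bigl[\p^\beta u(x-y)-\p^\beta u(x)\bigr]\,dy,
\]
and the H\"older estimate on $\p^\beta u$ combined with $\int|K_j(y)||y|^\a\,dy\lesssim 2^{-\a j}$ yields $\|\Delta_j u\|_{L^\infty}\lesssim 2^{-(k+\a)j}\|u\|_{C^{k, \a}}$ for $j\ge 0$. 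The block $\Delta_{-1}u$ vanishes by the mean-zero assumption, so this gives the Zygmund norm bound.

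For the reverse inclusion, given $u\in\rC^{k+\a}_*(\T^d)$, one has $u=\sum_{j\ge 0}\Delta_j u$ converging in $C^0$ because the mean is zero. Bernstein's inequality on $\T^d$ (again derived from \eqref{Poissonsum}) gives $\|\p^\beta\Delta_j u\|_{L^\infty}\lesssim 2^{(|\beta|-k-\a)j}\|u\|_{C^{k+\a}_*}$, so the series for $\p^\beta u$ converges absolutely in $L^\infty$ for $|\beta|\le k$. To control the H\"older modulus of $\p^\beta u$ with $|\beta|=k$, I would use the standard high/low-frequency splitting: choose $N$ with $2^{-N}\sim |x-y|$, bound the low-frequency part $\sum_{j<N}$ via the mean value theorem together with the Bernstein estimate at order $|\beta|+1$, and bound the high-frequency part $\sum_{j\ge N}$ directly by the $L^\infty$ estimate; both pieces sum geometrically to $|x-y|^\a\|u\|_{C^{k+\a}_*}$.

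The main obstacle I expect is not conceptual but bookkeeping: one must verify that the periodized kernels $K_j^{\T^d}(x)=\sum_{k\in\Zz^d}K_j^{\Rr^d}(x+2\pi k)$ satisfy the same uniform-in-$j$ bounds as in $\Rr^d$, including the weighted moment bound on $|y|^\a$. Once this uniformity is recorded, the proof mirrors the Euclidean argument verbatim, and the two-sided estimate follows.
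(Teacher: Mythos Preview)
Your proposal is correct and follows the same route the paper indicates: transfer the classical $\Rr^d$ Littlewood--Paley characterization of H\"older spaces to $\T^d$ via the Poisson summation formula \eqref{Poissonsum}, so that the periodized kernels of $\Delta_j$ inherit the uniform $L^1$ and moment bounds needed to run the standard high/low-frequency argument. The paper itself does not spell out these steps; it simply records that the result is well known on $\Rr^d$, that Poisson summation carries it to $\T^d$, and refers to \cite[Proposition~A.1]{GNP} for details. Your outline is therefore more explicit than the paper's, but methodologically identical.

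One small remark on the forward inclusion: from your displayed identity you obtain $\|\Delta_j\p^\beta u\|_{L^\infty}\lesssim 2^{-\a j}$ for $|\beta|=k$, not directly $\|\Delta_j u\|_{L^\infty}\lesssim 2^{-(k+\a)j}$. To close this you should either (a) use that $K_j$ annihilates all polynomials of degree $\le k$ (not just constants) and subtract the full $k$-th order Taylor polynomial of $u$, or (b) insert a reverse Bernstein step, writing $\Delta_j u=\sum_{|\beta|=k} m_\beta(D)\tilde\Delta_j(\p^\beta\Delta_j u)$ with symbols $m_\beta$ of order $-k$ supported on the annulus. Either route is routine and fits within your framework.
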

Proposition \ref{prop:ZH} is a well-known fact  on $\Rr^d$ and can be similarly proven for $\T^d$ upon using the Poisson summation formula \eqref{Poissonsum}. Details can be found  in \cite[Proposition A.1]{GNP}.

\vspace{.1in}
{\noindent{\bf{Acknowledgment.}} 
The work of HQN was partially supported by NSF grant DMS-2205710. The author thanks I. Tice for various discussions on viscous surface waves. The author thanks W. Strauss for explaining the use of the Global Implicit Function Theorem in his joint work \cite{StraussWu}. The discussion has motivated this paper, which is dedicated to W. Strauss on the occasion of his 85th birthday. Finally,  the author thanks the referee for  constructive comments which have helped improve the presentation and accuracy  at places. 
}

\end{document}